\numberwithin{equation}{section}
\numberwithin{figure}{section}
\theoremstyle{plain}
\newtheorem{thm}{\protect\theoremname}[section]
  \theoremstyle{definition}
  \newtheorem{definition}[thm]{\protect\definitionname}
  \theoremstyle{plain}
  \newtheorem{lemma}[thm]{\protect\lemmaname}
  \theoremstyle{plain}
  \newtheorem{corollary}[thm]{\protect\corollaryname}
  \theoremstyle{plain}
  \newtheorem{proposition}[thm]{\protect\propositionname}
  \theoremstyle{remark}
  \newtheorem{remark}[thm]{\protect\remarkname}
  \theoremstyle{definition}
  \newtheorem{example}[thm]{Example}
\newcommand{\R}{ {\mathbb{R}} }
\newcommand{\E}{ {\mathbb{E}} }
\renewcommand{\P}{ {\mathbb{P}} }
\newcommand{\cL}{ {\mathcal{L}} }
\newcommand{\cC}{ {\mathcal{C}}}
\newcommand{\cB}{ {\mathcal{B}}}
\newcommand{\cU}{ {\mathcal{U}}}
\newcommand{\cN}{ {\mathcal{N}}}
\newcommand{\cP}{ {\mathcal{P}}}
\newcommand{\w}{ {\mathbf{w}} }
\theoremstyle{plain}
\renewcommand{\and}{\quad\textrm{ and }\quad}
\renewcommand{\P}{\mathbb{P}}
\newcommand{\cH}{\mathcal{H}}
\newcommand{\ww}{\mathbf{w}}
\newcommand{\product}[2]{\left\langle #1,#2 \right\rangle}
\newcommand{\modif}[1]{\color{black}{#1}\color{black}}
  \providecommand{\corollaryname}{Corollary}
  \providecommand{\definitionname}{Definition}
  \providecommand{\lemmaname}{Lemma}
  \providecommand{\propositionname}{Proposition}
  \providecommand{\remarkname}{Remark}
\providecommand{\theoremname}{Theorem}
\keywords{motion planning, gradient flow, rough paths, regularization by noise}
\subjclass{Primary 34H05; Secondary 60H50,60L20,68T07}
\begin{document}

\title{A gradient flow on control space with rough initial condition}

\author{Paul Gassiat and Florin Suciu}

\address{ Universit\'e Paris-Dauphine, PSL University, UMR 7534, CNRS, CEREMADE, 75016 Paris, France}
\email{gassiat@ceremade.dauphine.fr, florin.suciu@dauphine.psl.eu}

\thanks{PG would like to thank Fr\'ed\'eric Jean for pointers to the motion-planning literature, in particular on the continuation method.}
\thanks{FS's research is supported by the European Union's Horizon 2020 research and innovation programme under the Marie Sk\l{}odowska-Curie grant agreement No.~945332.}

\maketitle

\begin{abstract}
We consider the (sub-Riemannian type) control problem of finding a path going from an initial point $x$ to a target point $y$, by only moving in certain admissible directions. We assume that the corresponding vector fields satisfy the bracket-generating (H\"ormander) condition, so that the classical Chow-Rashevskii theorem guarantees the existence of such a path. One natural way to try to solve this problem is via a gradient flow on control space. However, since the corresponding dynamics may have saddle points, any convergence result must rely on suitable (e.g. random) initialisation. We consider the case when this initialisation is irregular, which is conveniently formulated via Lyons' rough path theory. We show that one advantage of this initialisation is that the saddle points are moved to infinity, while minima remain at a finite distance from the starting point. In the step $2$-nilpotent case, we further manage to prove that the gradient flow converges to a solution, if the initial condition is the path of a Brownian motion (or rougher). The proof is based on combining ideas from Malliavin calculus with \L{}ojasiewicz inequalities. A possible motivation for our study comes from the training of deep Residual Neural Nets, in the regime when the number of trainable parameters per layer is smaller than the dimension of the data vector.
\end{abstract}

\section{Introduction}

\subsection{Description of the problem}

Given smooth vector fields $V_1,\ldots, V_d$ on $\R^n$, an initial point $x \in \R^n$, consider, for $u \in L^2 := L^2([0,1],\R^d)$ the controlled ODE
\begin{equation*}
X_t = x + \int_0^t \sum_{i=1}^d V_i(X_t) u^i_t dt, \quad t \in [0,1].
\end{equation*}
We are then given a target point $y \in \R^n$ and consider the following classical deterministic control problem.
\begin{equation} \label{eq:mp}
 \mbox{ Find } u \in L^2([0,1], \R^d) \mbox{ s.t. } X_1 = y.
\end{equation}
Note that in many situations of interest, the number of vector fields $d$ is smaller than the ambient dimension $n$, so that, at a given time $t$, the system is only allowed to move in a subspace of all possible directions. However, assuming the bracket-generating condition
\begin{equation*}
\forall z \in \R^n, \quad \mathrm{Lie}(V_1,\ldots, V_d)\big|_{z} = \R^n,
\end{equation*}
the well-known Chow-Rashevskii theorem guarantees the existence of a solution to \eqref{eq:mp} (we refer to textbooks on sub-Riemannian geometry e.g.\cite{Mon02,Rif14,ABB19} for a proof of this as well as of the basic facts mentioned in this introduction).
\vspace{2mm}

In fact, the problem \eqref{eq:mp}, known as ''(nonholonomic) motion-planning'' in the control literature, is of considerable importance in applications such as robotics, and numerous procedures have been developed to obtain a solution to it. We refer to \cite[chapter 3]{Jea14} for an overview. Motivated by understanding the gradient descent in deep neural networks (see Subsection \ref{subsec:motiv} below for more details), we consider a non-problem specific gradient flow procedure : let $\mathcal{L}$ be defined by
\[  \cL : u \in L^2([0,1],\R^d) \; \mapsto \;  \modif{\frac 1 2} \left|y - X_1^x(u) \right|^2,\]
so that $\cL \geq 0$ and any zero of $\cL$ is a solution to \eqref{eq:mp}, and given an initial control $u_{init}$, consider the ODE (valued in $L^2([0,1],\R^d)$)
\[ u(0) = u_{init}, \;\; \frac{d}{ds} u(s) = - \nabla_{L^2} \cL(u(s)). \]
We are then interested in the following question : can we find conditions guaranteeing that for $u$ as above,  it holds that
\begin{equation} \label{eq:convgf_intro}
\lim_{s \to \infty} u(s) = u_{\infty} \mbox{ with } \cL(u_{\infty}) = 0,
\end{equation}
\modif{if possible with a quantitative convergence speed ?

A first positive observation is that, under the bracket-generating condition, $\cL$ admits no non-global local minima. Indeed, in that case, the endpoint map $u \in L^2  \mapsto X^x_1(u)$ is open (e.g. \cite[Prop. 1.12]{Rif14}), namely for a given $u$, any $L^2$-neighborhood of $u$ is mapped by $X^1_x$ to a neighborhood of $X_1^x(u)$, and if $X_1^x(u) \neq y$ this neighborhood contains a point which is closer to $y$.}

However, $\cL$ may in general  admit \textbf{critical points which are not minima} : indeed, since the gradient of $\cL$ is seen to be
\begin{equation} \label{eq:nablaL_intro}
 (\nabla_{L^2} \cL)(u) =  (y -X_1^x(u)) \cdot_{\R^n} (\nabla_{L^2} X_1^x)(u) ,
 \end{equation}
this may happen if $u$ is such that the differential of the endpoint map is not surjective. Such controls are well-known to exist (for instance, $u=0$ is always one if $d<n$, since then $Im(dX_1^x)$ is spanned by $V_1(x),\ldots, V_d(x)$), and play an important role in sub-Riemannian geometry (they are typically called \textit{singular controls}).

Another serious problem when  trying to prove convergence is that, since $\cL$ does not contain any cost (or penalization) term, its sub-level sets are not bounded (in fact, it is easy to see that it has zeroes of arbitrarily high norm \footnote{\modif{for instance, if $u$ is any control joining $x$ and $y$, and $v$ is any non-zero control joining $y$ to $y$, then the concatenation of $u$ and $n$ copies of $v$, suitably rescaled to be indexed by $[0,1]$, is a zero of $\cL$ with $L^2$ norm of order $n$.}}), and there is no a priori guarantee that the trajectory will not \textbf{diverge to infinity}.
\vspace{2mm}

In any case, the existence of singular controls means that we cannot hope for convergence results for \emph{all} starting conditions $u_{init}$, but we may still hope for convergence for generic ones, for instance in the sense of a (probability) measure on control space. Indeed, singular controls are rare in this sense, and more precisely, it was shown by Malliavin, as part of his stochastic proof of H\"ormander's hypoellipticity theorem \cite{Mal78}, that
\begin{equation} \label{eq:Mal_intro}
\mbox{ If $u = \dot{B}(\omega)$  where $B$ is a Brownian motion, then, almost surely,  $u$ is non-singular.}
\end{equation}
(Note that $\dot{B}$ is not an $L^2$-function but standard stochastic calculus allows to make sense of all the relevant objects). In our context,  this result means that, if we initialize our gradient descent from such a probabilistic object, we are (almost) never immediately stuck at a saddle-point. Of course, this is much weaker than what we would want, namely that we are also not attracted to such a point (or to infinity) in the long time limit. Nevertheless, the proof of the above statement, and especially of its recent variants in a rough path context clearly highlighting the role of \emph{irregularity} of Brownian motion for the validity of \eqref{eq:Mal_intro}, suggest that it might be possible to use similar ideas to move towards a proof of \eqref{eq:convgf_intro} (with random initial condition). The aim of the present article is to take a first step in this direction. %{\color{red} ***TODO improve wording of final sentences}

\subsection{Main results and methods of proof} \label{subsec:results_intro}
We now describe some of the results that we obtain. We will consider irregular initial conditions $u_{init}$ which are typically not in $L^2$, but it will still make sense to consider the $L^2$ gradient flow (see below for a short discussion on technical details), meaning that the solution will be written at a positive time $s$ as
\[ u(s) = u_{init} + v(s), \]
with $v(s) \in L^2$.
\vspace{2mm}

The first result is of a qualitative nature and shows, in a rather general setting, an advantage of initialising from a rough initial condition.

\begin{thm} \label{thm:qualit_intro} Let $V_1, \ldots, V_d$ be $C^{\infty}_b$ bracket-generating vector fields on $\R^n$. Let $u_{init}=\dot{B}(\omega)$ where $B$ is a Brownian motion. Then, almost surely :

(1) There exists $v \in L^2$ such that $\cL(u_{init} + v) = 0$.

(2)  For any $v$ in $L^2$, $u_{init} + v$ is not a saddle-point, i.e. $\nabla_{L^2} \cL(u_{init} + v) = 0 \Rightarrow \cL(u_{init} + v) = 0$.

(3) If the trajectory $(v(s))_{s \geq 0}$ is bounded in $L^2$, then convergence \eqref{eq:convgf_intro} holds \modif{(and is exponentially fast)}.
\end{thm}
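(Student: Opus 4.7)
The strategy is to prove the three parts with largely separate tools: Part (2) via a rough-path adaptation of Malliavin's non-degeneracy argument, Part (1) via a continuation/path-lifting method once (2) is available, and Part (3) via (2) combined with a \L{}ojasiewicz--Simon type inequality.

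I begin with (2), the technical heart. Fix $\omega$ in the full-measure set where the Stratonovich lift $\mathbf{B}$ of $B$ is well defined. By \eqref{eq:nablaL_intro}, $\nabla \cL(u_{init}+v)=0$ forces either $y=X_1^x(u_{init}+v)$ (whence $\cL=0$, as desired) or $y-X_1^x(u_{init}+v)$ in the kernel of the transpose of $(\nabla_{L^2}X_1^x)(u_{init}+v)$. So it suffices to show that almost surely, for \emph{every} $v\in L^2$, the differential of the endpoint map at $u_{init}+v$ is surjective onto $\R^n$; equivalently, that the reduced Malliavin covariance matrix
\[ M_v := \int_0^1 J_{s,1}(v)\, V(X_s)\, V(X_s)^{T}\, J_{s,1}(v)^{T}\, ds \]
is positive definite, where $V=(V_1,\ldots,V_d)$ and $J_{s,t}(v)$ is the Jacobian of the rough-path flow driven by $u_{init}+v$. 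The plan is a pathwise version of Malliavin's argument: if $\xi^{T}M_v\xi=0$ for some $\xi\in\R^n\setminus\{0\}$, then $s\mapsto \xi^{T}J_{s,1}(v)V_i(X_s)$ vanishes identically for each $i$; expanding this identity via a rough Stratonovich/Taylor expansion, the iterated integrals of $\mathbf{B}$ appear as independent coefficients, and by their non-degeneracy (insensitive to the $L^2$-shift $v$) one extracts a cascade of constraints on the iterated brackets of $V_1,\ldots,V_d$ evaluated at $x$. The bracket-generating hypothesis then forces $\xi=0$, a contradiction. Working exclusively with the fixed rough path $\mathbf{B}$ throughout this argument gives uniformity in $v\in L^2$.

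For (1), assume (2). The map $\Phi:v\in L^2\mapsto X_1^x(u_{init}+v)\in\R^n$ is then a smooth submersion everywhere, and in particular its image is open. Given any smooth $\gamma:[0,1]\to\R^n$ with $\gamma(0)=\Phi(0)$ and $\gamma(1)=y$, one produces a lift $v:[0,1]\to L^2$ with $v(0)=0$ and $\Phi(v(\tau))=\gamma(\tau)$ by the continuation ODE $\dot v(\tau)=(d\Phi|_{v(\tau)})^{\dagger}\,\dot\gamma(\tau)$ (with $\dagger$ the Moore--Penrose right-inverse). Local existence is immediate from (2). The main point is to rule out blow-up of $\|v(\tau)\|_{L^2}$ as $\tau\to 1$; this is arranged by a careful choice of $\gamma$ together with uniform lower bounds on the smallest singular value of $d\Phi$ along the lift, which in turn rest on the quantitative form of the estimates used in (2).

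For (3), $\cL(u(s))$ is non-increasing along the flow and $\int_0^\infty \|\nabla\cL(u(s))\|_{L^2}^2\,ds<\infty$. If $(v(s))_{s\geq 0}$ is bounded in $L^2$, then weak-$L^2$ compactness combined with continuity of the rough-path endpoint map under weak perturbations of the smooth part yields a weak limit point $v^*$ of the trajectory with $\nabla\cL(u_{init}+v^*)=0$; by (2), $\cL(u_{init}+v^*)=0$, so $\cL(u(s))\to 0$. To upgrade this to actual $L^2$-convergence $v(s)\to v_\infty$, one invokes a \L{}ojasiewicz--Simon inequality of the form $\|\nabla\cL(u)\|\gtrsim \cL(u)^\theta$ with $\theta\in(0,1/2]$ on bounded neighborhoods of the zero set of $\cL$; the classical \L{}ojasiewicz arc-length estimate then gives $\int_0^\infty \|\dot v(s)\|_{L^2}\,ds<\infty$, from which convergence follows. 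I expect (2) to be the principal obstacle, specifically carrying out the Malliavin-type expansion at the level of the fixed rough path $\mathbf{B}$ so as to be uniform over all $L^2$-shifts; the blow-up control in (1) and the \L{}ojasiewicz step in (3) are secondary technical hurdles.
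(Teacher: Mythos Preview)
Your approach to (2) is essentially the paper's: the paper formalises your ``pathwise Malliavin argument'' via the \emph{true roughness} property of Brownian paths (a.e.\ $t$, $\limsup_{s\downarrow t}|w_{s,t}\cdot\zeta|/(s-t)^\beta=+\infty$ for all $\zeta\neq 0$), shows this property is stable under adding any $C^{q-var}$ path with $q>1/\beta$, and then runs exactly the iterative bracket-extraction you describe. Your (3) is also close to the paper's, though note that ``continuity under weak perturbations'' is not quite right as stated; what is actually used is that $H^1\hookrightarrow C^{q-var}$ is \emph{compact} for $q>1$, so bounded $L^2$-sequences of $v$ give strong $q$-variation convergence of $h=\int v$, and rough path continuity then applies. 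With that fix, your argument and the paper's (a Polyak--\L{}ojasiewicz inequality with constant $c_\w(h)>0$, uniform on bounded sets by compactness) coincide.

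The genuine gap is in (1). Your continuation ODE $\dot v=(d\Phi)^\dagger\dot\gamma$ requires a lower bound on the smallest singular value of $d\Phi$ along the lift, i.e.\ a \emph{quantitative} estimate of the form $c_\w(h)\geq\varphi(\|h\|)$ for some $\varphi>0$. Part (2) gives only $c_\w(h)>0$ pointwise; no uniform bound follows, and the paper obtains a quantitative bound (specifically $c_\w(h)\gtrsim(1+\|h\|)^{-1}$) only under the additional step-$2$ nilpotency assumption of Theorem~\ref{thm:conv_intro}. In the general bracket-generating case you cannot rule out that $c_\w(v(\tau))\to 0$ along the lift, so the continuation argument as written does not close. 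The paper instead proves (1) by a direct Chow--Rashevskii argument with rough drift: it shows the set of reachable endpoints $\{X_1^x(\w+h):h\in\cH\}$ is dense (by squeezing the rough part into an arbitrarily short time interval via reparametrisation, then applying classical Chow--Rashevskii on the remainder) and has nonempty interior (by a rank/constant-rank argument adapted from Rifford), then combines these forward from $x$ and backward from $y$. This route needs no quantitative control on $c_\w$.
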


In fact, the results above do not use in a crucial way the Brownian nature of $u_{init}$ and would remain valid for a large class of initialisations (e.g. $\dot{B}^H$ with $B^H$ fractional Brownian motion (fBm) with Hurst index $H \in (\frac{1}{4},1)$). Note that point (1) is similar to the Chow-Rashevskii theorem but not a direct consequence of it, since $u_{init}$ is not in $L^2$. The interpretation of the theorem is as follows: starting our gradient flow from an irregular initial condition means that now all the ''bad'' (saddle) points have been moved to infinity (point (2)), while some zeroes of $\cL$ still remain at a finite distance (point (1)). In particular, this means that, the two obstructions to convergence (saddle points and possible divergence to infinity) now play a similar role in terms of the function $v$, which is made explicit by the convergence criterion (3). 
\vspace{2mm}

Note that the above result, while a clear hint that rough initial conditions may help, does not guarantee convergence as the gradient flow could still diverge to infinity. Our next theorem, and the main result of the paper, shows (almost sure) convergence in a simple (but non-trivial) case.

\begin{thm} \label{thm:conv_intro} Let $V_1, \ldots, V_d$ be $C^{\infty}_b$ bracket-generating vector fields on $\R^n$, with step-$2$ nilpotent Lie algebra, i.e.
\[ \forall i,j,k \in \{1,\ldots,d\}, \;\; \left[ \left[V_i, V_j \right], V_k \right] \equiv 0. \]
Let $u_{init}=\dot{B}(\omega)$ where $B$ is a Brownian motion. Then, almost surely, for any initial and target points $x,y \in \R^n$, convergence \eqref{eq:convgf_intro} holds \modif{(and is exponentially fast)}.
\end{thm}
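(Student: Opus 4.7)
The plan is to invoke Theorem \ref{thm:qualit_intro}(3): it suffices to show that almost surely the trajectory $(v(s))_{s \geq 0}$ stays bounded in $L^2$. I would obtain this via a quantitative Polyak--\L{}ojasiewicz-type inequality combined with a bootstrap argument. Differentiating $\cL(w) = |y - X_1^x(w)|^2$ gives $\nabla \cL(w) = -2(dX_1^x(w))^*(y - X_1^x(w))$, hence
\begin{equation*}
|\nabla \cL(w)|^2_{L^2} \;=\; 4\,\langle y - X_1^x(w),\, M(w) (y - X_1^x(w))\rangle \;\geq\; 4\,\lambda_{\min}(M(w))\,\cL(w),
\end{equation*}
where $M(w) = dX_1^x(w)\,(dX_1^x(w))^*$ is the Malliavin covariance matrix of the endpoint map at $w$. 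The desired inequality $|\nabla \cL|^2 \geq c(R)\,\cL$ on $\{\|v\|_{L^2} \leq R\}$ thus reduces to a uniform \emph{positive lower bound} on $\lambda_{\min}(M(u_{init}+v))$ as $v$ ranges over that ball.

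Given such a bound, boundedness of the trajectory follows by a standard computation: while $\|v(s)\|_{L^2} \leq R$, using $\dot{\cL} = -|\nabla \cL|^2$,
\begin{equation*}
\int_0^T \|\dot v(s)\|_{L^2}\, ds \;=\; \int_0^T |\nabla \cL(u(s))|\, ds \;\leq\; \int_0^T \frac{-\dot{\cL}(s)}{\sqrt{c(R)\,\cL(s)}}\, ds \;\leq\; \frac{2\sqrt{\cL(u(0))}}{\sqrt{c(R)}}.
\end{equation*}
Choosing $R$ large enough that $\|v(0)\|_{L^2} + 2\sqrt{\cL(u(0))}/\sqrt{c(R)} < R$ (possible as long as $c(R)$ does not decay faster than some negative power of $R$), continuity of $s \mapsto \|v(s)\|_{L^2}$ traps the trajectory in $B_{L^2}(0,R)$ for all $s \geq 0$; boundedness then yields convergence via Theorem \ref{thm:qualit_intro}(3).

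The main obstacle, and where the step-$2$ nilpotent hypothesis enters, is the quantitative Malliavin non-degeneracy
\begin{equation*}
c(R) \;:=\; \inf_{\|v\|_{L^2} \leq R} \lambda_{\min}\bigl(M(u_{init}+v)\bigr) \;>\; 0 \qquad \text{a.s., with polynomial dependence on } R.
\end{equation*}
Under $[[V_i,V_j],V_k] \equiv 0$ the Chen--Strichartz expansion truncates at level two, so the endpoint map $X_1^x(w)$ depends on $w$ only through its first- and second-level iterated integrals; consequently $dX_1^x(w)$ and $M(w)$ are explicit polynomial expressions in these objects. Writing $V_t = \int_0^t v_s\, ds$, the matrix $M(u_{init}+v)$ splits into a principal term built only from the Brownian rough path (whose non-degeneracy at $v=0$ is Malliavin's classical result \cite{Mal78}) plus a perturbation which is polynomial in $v$ with coefficients involving iterated integrals of $B$. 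The hard task is to propagate this non-degeneracy from $B$ to $B+V$ \emph{uniformly} over bounded balls of $v$. I expect to do this via a rough-path/Norris-type argument exploiting the ``quantitative roughness'' of Brownian motion: the $\a$-H\"older regularity of its rough-path lift for $\a < 1/2$ prevents any nonzero linear functional of its increments and L\'evy area from vanishing, and this stays quantitatively true under $L^2$-bounded deterministic perturbations $V$, since such perturbations contribute only smoother (lower-order in H\"older sense) terms than $B$ itself.
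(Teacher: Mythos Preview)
Your overall architecture matches the paper's: reduce to a quantitative lower bound on the Malliavin covariance (the paper's $c_{\ww}(h)^2$) with explicit dependence on $\|h\|$, then feed this into a \L{}ojasiewicz-type argument. Two concrete issues stand between your sketch and a proof.

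\textbf{The naive bootstrap is borderline at the actual decay rate.} The paper obtains exactly $c_{\ww}(h) \gtrsim (1+\|h\|)^{-1}$, i.e.\ your $c(R)\sim R^{-2}$. At this rate your trapping condition $2\sqrt{\cL(u(0))}/\sqrt{c(R)}<R$ becomes $2\sqrt{\cL(u(0))}\,(1+R)/c<R$, which need not hold for any $R$ if the constants are unfavourable. The paper handles this via a refined \L{}ojasiewicz criterion (Proposition~\ref{prop:Loj}, Corollary~\ref{cor:c1r}): one integrates $c(r)$ and uses that $\int_0^\infty \frac{c}{1+r}\,dr=\infty$, which suffices regardless of the size of $\cL(u(0))$. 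Your simpler bootstrap would need $c(R)\gtrsim R^{-2+\varepsilon}$, which is not what one gets.

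\textbf{The Norris-lemma route does not deliver the required bound.} This is the real gap. The paper explicitly notes (end of \S\ref{subsec:results_intro}) that existing quantitative Norris-type lemmas do \emph{not} yield the needed uniform-in-$h$ polynomial lower bound \eqref{eq:cond_c_intro}; this is why the restrictive step-$2$ nilpotency is imposed. What the paper actually uses is different in nature: under step-$2$ nilpotency the function $t\mapsto \langle J_{1\leftarrow t}V_i(X_t),\xi\rangle$ is an \emph{explicit affine function of $B+h$}, namely $\lambda_i+\sum_j\mu_{ij}(B+h)^j_{t,1}$. The lower bound then reduces to a purely analytic fact about Brownian paths (Proposition~\ref{prop:BH1L2}):
\[
\inf_{c,\,|\lambda|=1}\Bigl\|\sum_i\lambda_iB^i-h-c\Bigr\|_{H^{1-\delta}}\;\gtrsim\;\frac{1}{1+\|h\|_{H^\delta}}\qquad\text{a.s.},
\]
proved by a Littlewood--Paley/Fourier argument exploiting that the dyadic blocks $\|\Delta_n(B)'\|_{L^2}^2$ concentrate around $2^{n+1}$. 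This is a Besov-type irregularity statement (close to the fact that $\|B\|_{B^{1/2}_{2,\infty}}>0$ a.s.), not a Norris-lemma iteration. Your intuition that ``$V$ contributes only smoother terms than $B$'' is exactly right, but the quantitative form in which this is cashed out is the above inequality, and identifying it is the crux of the proof.
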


Note that unlike the previous theorem, this one relies crucially on the precise (ir)regularity of Brownian motion (see \eqref{eq:BMirr_intro} below). In particular, the proof could be adapted to the case of less regular fBm, namely with $H < \frac 1 2$, but not to the case $H > \frac 1 2$. (However, the choice of the Hilbert space in which the gradient is taken is not crucial, as the result also holds if $L^2$ is replaced by the Sobolev space $H^{-\delta}$ for $0\leq \delta < \frac{1}{2}$.) %(TODO also mention elliptic result ?)
\vspace{2mm}

Our continuous-time results can also be seen to imply some asymptotic results on discrete problems. Consider the space  $\cU^L$ of controls who are piecewise-constant controls on each interval $((i-1)/L, iL), i=0,\ldots, L-1$. We write its elements $(u^j_i)_{1\leq j \leq d, 1\leq i \leq L}$ (where $u^j_i$ is the value of the $j$-th coordinate on $((i-1)/L,i/L)$), equipped with the $L^2$-induced norm, namely $\| u \|_{\cU^L}^2 = \frac{1}{L} \|(u^j_i) \|^2_{\ell^2(\R^{dL})}$. We then have the following counterpart of Theorem \ref{thm:conv_intro} above.%  both Theorems above. {\color{red} TODO not clear if we keep this here ?}

\begin{corollary} \label{cor:discrete_intro} Let $V_1, \ldots, V_d$ be $C^{\infty}_b$ bracket-generating vector fields on $\R^n$, with step-$2$ nilpotent Lie algebra. Let $U^{L}$ be a $\cU^L$-valued random variable with coordinates $U^{L,j}_i = \frac{1}{\sqrt{L}} Z^j_i$ where the $Z^j_i$ are independent $\cN(0,1)$.\\
Then, 
%assuming the $V_i$ are smooth and bracket-generating :
%
%(a) it holds that
%\[  \lim_{R \to \infty} \lim_{N \to \infty} \P \left(  \exists \|v\|_{\cU^N} \leq R \mbox{ s.t. } \cL(U^N+v) = 0 \right) = 1, \]
%\[ \forall R > 0  \;\;\lim_{N \to \infty} \P \left(  \exists \|v\|_{\cU^N} \leq R \mbox{ s.t. } \nabla_{\cU^N}\cL(u+v) = 0,  \cL(U^N+v) \neq 0 \right) = 0. \]
%
%(b) If the $V_i$ are step-$2$ nilpotent, 
for the gradient flow on $\cU^L$ defined by
\[ u^L(0)= U^L, \quad \frac{d}{ds} u^L = - \left(\nabla_{\cU^L} \cL\right)(u^L),\]
 it holds that
\[
\lim_{L \to \infty} \P \left(u^L(s) \mbox{ converges as } s \to \infty \mbox{ to } u_{\infty} \in \cU^L \mbox{ with } \cL(u_{\infty}) = 0 \right) = 1.
\]
\end{corollary}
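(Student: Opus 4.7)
The plan is to reduce the corollary to Theorem~\ref{thm:conv_intro} via a coupling with Brownian motion. Set $Z^j_i = \sqrt{L}\bigl(B^j_{i/L} - B^j_{(i-1)/L}\bigr)$ with $B$ a $d$-dimensional Brownian motion; then $U^{L,j}_i = \Delta B^j_i$, and the natural rough object associated to this initialisation is $\dot{B^L} := L\cdot U^L$, the derivative of the polygonal approximation of $B$, which converges to $\dot B$ in the rough-path topology underlying the proof of Theorem~\ref{thm:conv_intro}. On the full-$\P$ event provided by that theorem, the continuous $L^2$-gradient flow from $\dot B$ converges to some $u^*=\dot B+v_\infty$ with $\cL(u^*)=0$ and $v_\infty\in L^2$.

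Once this coupling is in place, the deterministic part of the argument reduces to standard finite-dimensional gradient-flow facts. Since $\cL|_{\cU^L}$ is smooth, any bounded trajectory converges to a critical point by a \L{}ojasiewicz/\L{}ojasiewicz--Simon inequality. Moreover, by the center-stable manifold theorem, the set of initial conditions in $\cU^L$ attracted to a non-minimal critical point is a locally finite union of submanifolds of positive codimension, hence Lebesgue-null; as $U^L$ has a positive Gaussian density on $\cU^L$, the initialisation almost surely avoids this set, and therefore \emph{if} the trajectory is bounded, it converges to a zero of $\cL$. The corollary thus reduces to showing that $(u^L(s))_{s\ge 0}$ is bounded in $\cU^L$ with $\P$-probability tending to $1$.

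For this last step, observe that the $\cU^L$-gradient is exactly the $L^2$-projection $P_L$ of the $L^2$-gradient, so the discrete flow is the Galerkin truncation of the continuous one. Writing $u^L(s) = U^L + w^L(s)$ with $w^L\in\cU^L$, I would compare $w^L$ to the continuous residual $v$ of Theorem~\ref{thm:conv_intro}, using the quantitative convergence $\dot{B^L}\to\dot B$ together with the Lipschitz dependence of the endpoint map and its Jacobian on the rough-path lift, to obtain a uniform-in-$L$ bound $\sup_{s\ge 0}\|w^L(s)\|_{L^2}\le C(\omega)$ on the good event. The main obstacle is that $U^L$ itself does \emph{not} approximate $\dot B$ in $L^2$ (its $L^2$-norm is of order $1/\sqrt L$ and vanishes), so the comparison must be two-scale: the rough contribution $\dot{B^L}=L\cdot U^L$ must enter as an exogenous driver of the gradient dynamics, while only the smooth residual $w^L$ is controlled by a \L{}ojasiewicz-type estimate uniform in $L$, in the spirit of the Malliavin-calculus argument that underlies Theorem~\ref{thm:conv_intro}. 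Granted such a bound, boundedness of $u^L$ combined with saddle-avoidance yields convergence of $u^L(s)$ to some $u_\infty\in\cU^L$ with $\cL(u_\infty)=0$, on an event whose probability tends to $1$ as $L\to\infty$.
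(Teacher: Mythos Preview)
Your second paragraph contains two genuine gaps. The center-stable manifold argument for saddle avoidance requires every non-minimal critical point of $\cL|_{\cU^L}$ to be a \emph{strict} saddle, which is neither assumed nor evident for general $C^\infty_b$ step-$2$ nilpotent fields; for degenerate critical points the stable set need not be null. Likewise, the ``bounded $\Rightarrow$ convergent'' step via a \L{}ojasiewicz inequality needs the finite-dimensional loss to be analytic (or definable in some o-minimal structure), which can fail when the $V_i$ are merely $C^\infty_b$. So the reduction to boundedness is not justified, and boundedness itself you leave as an acknowledged obstacle in your third paragraph.

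The paper bypasses both issues by making your ``exogenous rough driver'' remark exact rather than heuristic. Since $B^L$ has bounded variation, the flow on $\cU^L$ started from $\dot B^L$ coincides \emph{identically}, under the isometry $u\mapsto\int_0^\cdot u$ from $\cU^L$ onto the space $\cH_L\subset H^1_0$ of piecewise-linear paths on the mesh $\{i/L\}$, with the rough gradient flow of Section~\ref{sec:defGF} for the pair $(\mathbf{B}^L,\cH_L)$ started from $0$, where $\mathbf{B}^L$ is the canonical lift of $B^L$. The corollary then follows from a single stability result, Proposition~\ref{prop:continuityGF}: convergence of the rough gradient flow for $(\mathbf{B},H^1_0)$ to a non-degenerate zero (which is Theorem~\ref{thm:mainstep2}) transfers to any pair $(\tilde\ww,\tilde\cH)$ with $\rho_{p-var}(\tilde\ww,\mathbf{B})$ and the projection defect $\epsilon_q(\tilde\cH,H^1_0)$ of \eqref{eq:defEps} small enough; its proof is a finite-horizon Gr\"onwall comparison of the two flows followed by the local trapping criterion Corollary~\ref{cor:LocLoj} near the limit $h_\infty$. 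One then feeds in $\rho_{p-var}(\mathbf{B}^L,\mathbf{B})\to 0$ in probability and $\epsilon_q(\cH_L,H^1_0)\leq C_q L^{1/q-1}\to 0$ (Lemma~\ref{lem:epsH}). No separate boundedness or saddle-avoidance step is needed, and the two-scale difficulty you flag is entirely absorbed into this rough-path continuity.
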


\subsection*{Methods of proof.}

A first important point is that we need to work in an analytic framework which is rich enough to work with irregular trajectories, namely we need to be able to consider equations of the form
\[
X_t = x + \int_0^t \sum_{i=1}^d  V_i(X_r) \left( dw^i_r + dh^i_r \right),
\]
where $w = \int_0^{\cdot} u_{init}$ is our irregular (say, Brownian) initial condition and $h = \int_0^{\cdot} v$ is an arbitrary (varying) element of $\cH=H^1([0,1],\R^d)$. Keeping in mind that in our gradient descent, the value of $h=h(s)$ at $t \in [0,1]$ will typically depend on the whole path $(w_t)_{t \in [0,1]}$, classical It\^o calculus would not be particularly well-suited to the task. We choose instead to work in the framework of Lyons' rough path theory \cite{Lyo98}. Recall that this theory identifies an analytic framework (based on considering iterated integrals of $w$) under which one can make sense of equations driven by an irregular signal $w$ (with a continuous dependence, for the corresponding topology). This achieves a clean separation with the probabilistic aspects, which are then reduced to showing that Brownian motion (or the process of interest) can be placed in this framework. In our context, rough path theory allows us to immediately make sense of all the required objects, and in fact gives us some flexibility w.r.t. both choice of $w$ (which can for instance, be taken as a realisation of many other random processes), and in the choice of the Hilbert space $\cH$ on which we will make the gradient descent (we essentially only need that it satisfies the so-called 'Complementary Young Regularity' assumption w.r.t. regularity of $w$). \modif{
Well-known estimates from rough path theory also allow us to differentiate (in a classical Fr\'echet sense) the It\^o map w.r.t. the translating path $h$ (in the case where $w=0$, this goes back to works of Bismut on ''deterministic Malliavin calculus'' \cite{Bis84}). }
The continuity properties of rough path theory further allow for simple and transparent proofs of convergence results, which are crucial in many places (for instance in proving the Chow-type theorem, or when going from continuous to discrete).
\vspace{2mm}

Let us now comment a bit more precisely on the problem at hand. A first remark is that with $\cL$ described at above, evaluated at $w + h$, it is immediate from \eqref{eq:nablaL_intro} that
\begin{equation} \label{eq:nabla_c_intro}
\left\| \nabla_{\modif{\cH}} \cL(w+h) \right\|^2 \geq c(w+h)^2 \cL(w+h), 
\end{equation}
where
\[
c(w+h)^2 = \inf_{|\xi|_{\R^n} = 1}  \left\| \xi \cdot_{\R^n} \nabla_{\cH} X_1^x(w+h) \right\|_{\cH}^2.
\]
Note that $c^2$ is a classical object in Malliavin calculus, namely the smallest eigenvalue of the so-called Malliavin matrix for the endpoint functional (evaluated at $w+h$). Classical (Malliavin) path-space calculus shows that the gradient on the right-hand side can itself be written in terms of differential equations (driven by $w+h$). We can then follow Malliavin's argument (\modif{which was first adapted to the rough path language by \cite{CF10}, and then in a form closer to the one we use by \cite{HP13,FS13}}). It is an iterative procedure that only requires that $w+h$ satisfies
\begin{equation} \label{eq:tr_intro}
\int_{0}^{\cdot} \sum_{i=1}^d f^i_r  \left( dw^i_r + dh^i_r \right) \;\Rightarrow \; f^1, \ldots, f^d \equiv 0
\end{equation}
(for $f$ in a suitable class of integrands). In our context, this is known to hold if $w+h$ has the so-called \emph{true roughness} property, and since  this property holds (a.s.) for a Brownian path  $w$ and is stable under addition of a sufficiently regular (for instance, bounded variation) path, this leads immediately to the fact that $c(w+h)>0$ (a.s.), and point (2) in Theorem \ref{thm:qualit_intro} above.
\vspace{2mm}

As far as the (a.s.) global convergence result, i.e. Theorem \ref{thm:conv_intro}, we rely on a variant of \L{}ojasiewicz' convergence criterion for gradient flow \cite{Loj63} (see subsection \ref{subsec:Loj}), according to which (keeping in mind \eqref{eq:nabla_c_intro}), it is sufficient to prove a quantitative estimate of the form
\begin{equation} \label{eq:cond_c_intro}
c(w+h)^2 \geq \frac{C(w)}{1 + \|h \|_{\cH}^2}
\end{equation}
for some $C(w) > 0$. We are able to do so in the step-$2$ nilpotent case, using a fine irregularity property of Brownian paths, which is a slight strengthening of the inequality
\begin{equation} \label{eq:BMirr_intro}
\left\| B(\omega) - h \right\|_{L^2} \geq \frac{C(\omega)}{1+ \|h\|_{H^1}}
\end{equation}
for an a.s. positive $C(\omega)$ (see Proposition \ref{prop:BH1L2}). While we are not aware of a previous occurrence of this inequality in the literature, it is very similar to the well-known fact that Brownian motion, as an element in the Besov space $\cB^{1/2}_{2,\infty}$, has its norm a.s. bounded from below by a strictly positive constant (see \cite{Roy93}). This is why our proof crucially relies on specific irregularity of Brownian motion, and would break down for more regular initial conditions.

\vspace{2mm}

We conclude this section by discussing how our work relates with existing literature. In stochastic analysis and Malliavin calculus, the a.s non-degeneracy of the Malliavin matrix (equivalently, non-singularity of white noise \eqref{eq:Mal_intro}) goes back to the original articles \cite{Mal78} (see \cite{Hai11} for a recent exposition), with a proof already using a (stochastic) version of the implication \eqref{eq:tr_intro} above (as already mentioned, the version we use comes from \cite{HP13, FS13}).  In fact, quantitative version of this non-degeneracy (crucial when proving smoothness of densities for hypo-elliptic diffusions) are also known in this context. Typically, these results use as a crucial tool the classical Norris lemma \cite{Nor86}, which was then extended to various contexts, in particular in the recent rough path literature, e.g. \cite{CHLT15}. However, the version we need to prove gradient flow convergence (see \eqref{eq:cond_c_intro}) appears to be stronger than what these results imply, which is why we need to make our (restrictive) nilpotency assumption.

Similar questions have also been considered in the deterministic control literature. An algorithm similar in spirit to our gradient flow is suggested in \cite{DW97} (with no proof for convergence). A slightly more sophisticated ''continuation method'' has been suggested and studied by Sussmann and Chitour \cite{Sus93, CS98, Chi06}. The idea is to find a path $(u(s),0\leq s \leq 1)$ in control space whose endpoints form a prescribed curve, by solving an ODE on control space. They manage to prove well-posedness of this ODE under a so-called ''strong bracket generating'' assumption, \modif{namely
\[
\forall x \in \R^n , \, \forall \theta = (\theta^1, \ldots, \theta^d) \in \R^d,\quad {\rm{span}} \{ V^i(x), [ \theta \cdot V , V^i ](x), i=1,\ldots, d \} = \R^n.
\]
This is a rather strong assumption under which, (as first observed by \cite{Bis84}) all non-null controls are non-singular (so that the functional $\cL$ admits at most one saddle-point). It implies that the Lie algebra of the vector fields is step $2$-generated (but not necessarily nilpotent, unlike what we require in our global convergence result).
} 
Interestingly, the heuristic difficulties for Chitour and Sussman's continuation method (avoiding singular controls and escaping to infinity) are the same as here, and in fact, the crucial step in their proof is also to obtain an inequality of the form \eqref{eq:cond_c_intro} (for $w=0$). This means in particular that our Theorem \ref{thm:conv_intro} has a counterpart in their context (see Remarks \ref{rem:SC1} and \ref{rem:SC2} below for more detailed discussion). More recently, a gradient flow for control problems of the form that we study has been considered by Scagliotti \cite{Sca23}, he considers the case with penalisation which makes the analysis rather different (he obtains convergence to a critical point if the cost is an analytic function).

\subsection{Motivation from machine learning} \label{subsec:motiv}

Recall the setting of supervised learning : we have an unknown function $y : x \in \R^n \mapsto y(x) \in \R^n$ (we assume input and output have the same dimension for simplicity), and we want to find a good approximation in a certain parametrised function class $\left\{ \phi^{\theta}, \;\; \theta \in \Theta\right\}$. This means that we want to find $\theta$ s.t. the (for instance) quadratic loss, for a probability measure $\mu$ on $\R^n$,
\[ \cL(\theta) := \int \left| y(x) - \phi^{\theta}(x)\right|^2 \mu(dx)  \]
is sufficiently small.

We do not assume that $y$ or $\mu$ are known, but only that we have access to a finite sample $(x_i, y_i = y(x_i))_{i=1,\ldots, N}$, so that instead of $\cL$ we minimise the so-called empirical loss
\[ \hat{\cL}(\theta) := \frac{1}{N} \sum_i \left| y_i - \phi^{\theta}(x_i)\right|^2,\]
hoping that it will be close enough to the ''true'' loss (the difference between the two at the chosen $\theta$ is known as the generalisation error).

We are interested in the particular case of deep learning, and more specifically, ResNets, which means that we consider maps of the form $\phi^{\theta}(x) = X_L^{x}(\theta),$ where $L$ represents a fixed number of layers, and $X$ is given by the recursion
\[ X^x_0=x, \;\;\; X^x_{k+1}= X^x_k + \delta_L \sigma( X_k, \theta_k). \]
Above, $\delta_L$ is a fixed constant (although it could also be considered as an additional parameter), and $\sigma:\R^n \times \R^d \to \R^n$ is a fixed nonlinear function. The parameter set is therefore $\Theta = \{ (\theta_k)_{k=0,\ldots, L-1}\vert\,\theta_k\in\R^d,\forall k=0,\dots,L-1\} = (\R^d)^L$.

In the regime where $L \to \infty$ and $\delta_L \sim \frac{1}{L}$, the above recursion can be interpreted as the Euler scheme for the ODE $\dot{X}_t = \sigma(X_t, \theta_t)$, which can then be analysed via dynamical systems / continuous analysis methods. This point of view (sometimes referred to as 'neural ODE') has been proposed by several authors \cite{E17, HR17, CRBD18}. 

However, it has also been argued that ResNets should be understood not as ODEs but as \emph{Stochastic} differential equations (SDE), by taking different values for the scaling parameter $\delta_L$ \cite{PF20,CCRX21}. In particular, if the parameters act linearly layer-wise, i.e., the recursion is written $X_{k+1} = X_k + \delta_L \sigma(X_k) \theta_k$, then it has been shown \cite{Hay22,MFBV25} that, if the $\theta_k$ are sampled from i.i.d. Gaussians, the scaling $\delta_L= \frac{1}{\sqrt{L}}$ is the only one to lead to a non-trivial scaling limit, namely the It\^o SDE $dX_t = \sigma(X_t) dB_t$. Note, however, that these works only concern the initialisation (or the already trained network) and do not investigate if the scaling limit is preserved through training. {Nonetheless, in a recent line of works \cite{BNLHP24,YYZH23,CN24}, it has been shown that ResNets under $\frac{1}{\sqrt{L}}$ scaling exhibit hyperparameter transfer and feature learning phenomena during training in the large-depth regime, underlying the practical significance of this regime.} \modif{It has also been shown that rough path bounds (conveniently adapted to the setting of discrete recursions) could be used to measure stability of ResNets in this type of scaling regimes \cite{BFT22}}.% \modif{\cite{BFT22} show that bounds from discrete rough-path theory can be used to obtain stability estimates for ResNets under scaling regimes where weights behave like Brownian-type paths.}

The limiting deterministic ODEs have also been investigated from the control perspective in several works. The case of affine systems $dX_t = \sigma(X_t) \theta_t dt$ (which are the relevant ones for SDE initialisation), 
 has been treated by Agrachev and Sarychev \cite{AS22}, and Cuchiero et al. \cite{CLT20}. Note that the problem is then very similar to the one studied in this paper. In particular, finding a zero of the empirical loss is the $N$-point generalisation of \eqref{eq:mp} : given $(x_i,y_i)_{i=1,\ldots, N}$, 
\begin{equation*}  
 \mbox{ Find } u \in L^2([0,1], \R^d) \mbox{ s.t. } X_1^{x_i}(\theta) = y_i, \;\;\; i=1, \ldots, N.
\end{equation*}
(Note that the important point is that the control $\theta$ does not depend on $i$). \cite{CLT20} show in particular that it is possible to find $5$ vector fields $\sigma_1,\dots,\sigma_5$ s.t. the above has a solution for \emph{arbitrarily high} $N$ (as long as the state dimension $n$ is greater than $2$ and the $x_i, y_i$ are all distinct).

Given that the above results imply that, at least in the large depth limit, there exist parameters for the ResNets which achieve zero (or very small) empirical loss, an important and natural question is then whether these can be found by the usual gradient-descent type training procedures, and whether the choice of initialisation (randomness, scaling) matter in this phase.

Note that while there are already some interesting results in the literature on the convergence of gradient descents for ResNets (see e.g. \cite{CRX22,BPV22,MWSB24} for recent contributions), they typically assume some relation between number of parameters $d$ per layer (e.g. via layer width) and number of data points $N$, which in particular means ''instantaneous'' controllability, so that the depth of the network does not seem to be important for the convergence (in contrast to the above mentioned controllability results, where continuous depth is crucial).

While most of our results mentioned in section \ref{subsec:results_intro} above do not apply directly to ResNets, we believe the ideas in this paper may be relevant in this context, and we intend to explore the connection further in future research. More precisely, our contributions in this setting are the following:
\begin{itemize}
\item propose rough path theory as a convenient and flexible technical framework to describe scaling limits of ResNets, not only at initialisation but also through training,
\item identify qualitative advantages of the ''stochastic'' ($\frac{1}{\sqrt{L}}$ and i.i.d. weights) scaling, via the non-degeneracy result Theorem \ref{thm:qualit_intro}, which actually has a counterpart for the empirical or true loss, see Proposition \ref{prop:ncN}), according to which saddle-points are asymptotically infinitely far from the initial condition.
 \item further provide a complete proof of convergence in a simple (but non-trivial) case with this scaling (Theorem \ref{thm:conv_intro}), using the actual irregularity of weights combined with a \L{}ojasiewicz criterion which degenerates at infinity.
\end{itemize}
Note that the last two items crucially rely on depth of the network, since they apply to genuinely sub-Riemannian situations.

\subsection{Organisation of the paper}

The rest of this article is organised as follows. In section \ref{sec:prelim}, we discuss preliminary requisites, including rough path theory and \L{}ojasiewicz-type criteria for convergence of gradient flows. In section \ref{sec:defGF}, we record the setting of gradient flow on control space setting that we consider in the rest of the paper. In section \ref{sec:no-saddles}, we obtain non-existence of saddle-points under the so-called true roughness condition (this implies Theorem \ref{thm:qualit_intro} (2) above). In section \ref{sec:CR-rough}, we obtain a general Chow-Rashevskii result with rough drift (which in particular implies Theorem \ref{thm:qualit_intro} (1)). In section \ref{sec:conv}, we prove convergence of gradient flow in both the elliptic case (for general initial condition) and the step-$2$ nilpotent case, started from Brownian motion (Theorem \ref{thm:conv_intro}). In section \ref{sec:cont}, we discuss continuity with respect to initial condition and underlying Hilbert space, and we obtain in particular discretised versions of our convergence theorem (Corollary \ref{cor:discrete_intro}). Finally, in section \ref{sec:num}, we provide some numerical experiments where we vary the regularity of the initialisation, and observe faster convergence for the more irregular paths. 

\section{Preliminaries} \label{sec:prelim}

\subsection{Notations}
Throughout this paper, we fix the \textit{control} dimension $d \geq 1$. Unless otherwise specified, all the function spaces that we consider will be from $[0,1]$ to $\R^d$.

Given a path $f : [0,1] \to \R^d$, we denote the increments $f_{s,t} := f_t-f_s$ for $s, t \in [0,1]$.

For $q \geq 1$, $C^{q-var}$ is the set of continuous paths $f$ (from $[0,1]$ to $\R^d)$ such that 
\[
\| f \|_{q-var}^q := \sup_{0 = t_0 \leq\ldots \leq t_m=1} \sum_{i=0}^{m-1} \left| f_{t_i,t_{i+1}}  \right|^q
\]
is finite.

We will write $H^0=L^2$. For $\delta \in (0,1]$, we will also consider the Sobolev-Slobodeckij seminorms
\[
\left\| f \right\|_{H^\delta}^2 := \int_{0 \leq s \leq t \leq 1} \frac{|f_{s,t}|}{|t-s|^{1+2\delta}} dt dt, \quad\mbox{for } 0< \delta < 1,
\]
and 
\[
\left\| f \right\|_{H^1}^2 = \int_0^1 |\dot{f}_t|^2 dt.%, \quad \mbox{if } f = \int_0^{\cdot} \dot{f}.
\]
For $\delta \in (1/2,1]$ any $f$ with finite $H^\delta$-norm is continuous (up to modification), so that pointwise evaluation makes sense. We then let 
\[
H^\delta_0 = \left\{ f : f_0 = 0 \mbox{ and } \| f \|_{H^{\delta} } < \infty\right\}. 
\]
These spaces are Hilbert spaces when equipped with the $\left\| \cdot \right\|_{H^\delta}$ norms.

\subsection{Rough path theory} \label{subsec:rp}

We recall some standard facts and notations from rough path theory. (Details can be found in \cite{FV10}).

For $N \geq 1$, let $G^N(\R^d) \subset \oplus_{k=0}^N (\R^d)^{\otimes k}$ be the step-$N$ free nilpotent Lie group over $\R^d$. Given a $G^N(\R^d)$-valued path $\ww$, we define its increments $\ww_{s,t}:= \ww_t \otimes \ww_{s}^{-1}$ for $s, t \in [0,1]$. For $p > 1$, we let $\cC_g^{p-var} =\cC_g^{p-var}([0,1],\R^d)=  C^{p-var} \left([0,1], G^{\lfloor p \rfloor}(\R^d) \right)$ be the set of (weakly) geometric $p$-variation rough paths. The rough path "norm" is defined by
\[
\| \ww\|^p_{p-var} =  \sup_{0 = t_0 \leq\ldots \leq t_m=1} \sum_{i=0}^{m-1} \left| \ww_{t_i,t_{i+1}}  \right|^p
\]
where $|\cdot|$ is the Carnot-Carath\'eodory metric on $G^N(\R^d)$.

Given $\alpha \in (0,1)$, we let $\cC_g^{\alpha} \subset \cC_g^{(1/\alpha)-var}$ be the set of $\alpha$-H\"older rough paths, i.e., the set of $\ww$'s s.t. $\sup_{s \neq t} \frac{|\ww_{s,t}|}{|t-s|^{\alpha}} < \infty$. 

We also let $\rho_{p-var}$ be the inhomogeneous rough path distance 
\[ \rho_{p-var}(\ww, \ww') =  \sum_{k=1}^{\lfloor p \rfloor} \sup_{0 = t_0 \leq \ldots\leq t_m=1} \left( \sum_{i=1}^m \left| \pi_k(\ww_{t_i,t_{i+1}} - \ww'_{t_i,t_{i+1}}) \right|^{\frac{p}{k}} \right)^{\frac{k}{p}}
\]
where $\pi_k$ is the projection $G^N(\R^d) \to (\R^d)^{\otimes k}$.

For any path $x$ of finite $1$-variation, the canonical rough path lift is denoted by $S(x)$, and defined by
\[
S(x)_t = \left( x_{0,t}, \int_0^t x_s \otimes dx_s, \ldots, \int_{0 \leq s_1 \leq \ldots \leq  s_{\lfloor p \rfloor} \leq t} d x_{s_{\lfloor p \rfloor}} \otimes \cdots \otimes dx_{s_1} \right)
\]
 Then any element $\ww$ of $\cC_g^{p-var}$ can be obtained as a limit of such lifts \modif{(see \cite[Prop. 8.12]{FV10})}, namely, there exists a sequence $(w_n)$ in $C^{1-var}$ s.t.
\begin{equation} \label{eq:limWn}
\forall t \in [0,1], \;\; \lim_{n} S(w^n)_t =\ww_t, \mbox{   and }  \sup_n \| S(w^n) \|_{p-var} < + \infty.   
\end{equation}

For $1 \leq q < 2$ ($q\leq p$) s.t. $\frac{1}{q} + \frac{1}{p} > 1$, for $\w \in \cC_g^{p-var}([0,1],\R^d)$ and $h \in C^{q-var}([0,1],\R^{d'})$, the \emph{Young pairing} \modif{(\cite[Sec. 9.4]{FV10})} of $\w$ and $h$ is denoted by $(\w,h)$. It is the element of $ \cC_g^{p-var}([0,1],\R^{d+d'})$ such that, if $(w_n)$ and $(h_n)$ are $1$-variation paths which converge respectively to $\w$ and $h$ in  the sense of \eqref{eq:limWn}, then $(\w,h)$ is the pointwise limit of $S((w_n,h_n))$. 
When $d'=d$, the \emph{Young translation} of the $p$-rough path $\ww$ by  $h \in C^{q-var} = C^{q-var}([0,1],\R^d)$ is denoted $\ww + h$. It can again be defined by approximations, namely, if  $w^n$ are $C^{1-var}$ elements satisfying \eqref{eq:limWn}, then $\ww + h = \lim_n S(w^n + h)$. 
 
Young translation is locally Lipschitz continuous in the sense that, for any $\ww$, $\ww'$ in $\cC^{p-var}_g$ and $h, h'$ in $C^{q-var}$, for any $M>0$ there exists a constant $C(M)>0$ s.t.
 \begin{align*} \label{eq:ContYoungTransl}
 \| \ww \|_{p-var} , \| \ww' \|_{p-var}, &\| h \|_{q-var}, \| h' \|_{q-var} \leq M \\
 &\Rightarrow  \;\; \rho_{p-var} \left( \ww + h, \ww' + h' \right) \leq C(M) \left( \rho_{p-var} (\ww, \ww') + \|h - h'\|_{q-var} \right),
 \end{align*}
 \modif{see \cite[Theorem 9.33]{FV10}}.
 
 Let $V_1, \ldots, V_d$ be $C^{\infty}_b$ vector fields on $\R^n$, and $x_0 \in \R^n$. For any rough path $\ww$, we consider the rough differential equation (RDE)
 \begin{equation}
 X_t = x_0 + \int_0^t \sum_i V_i(X_s) d \ww^i_s.
 \end{equation}
 $X=X(\ww)$ can again be defined by approximations, namely, by $X = \lim X^n$ where each $X^n$ is the solution along a $C^{1-var}$ path $w^n$, where $w^n$ satisfies \eqref{eq:limWn} \modif{(see \cite[Sec. 10.3]{FV10})}.
 
 We then consider the map 
 \[(\ww, h) \in \cC^{p-var}_g \times C^{q-var} \mapsto X_1 ( \ww + h),\]
 where again $\frac{1}{q} + \frac{1}{p} > 1$. This map is Fr\'echet differentiable w.r.t. its second coordinate \modif{(see \cite[Theorem 11.6]{FV10})}, and the derivative can be written as
 \[ d_{h}X_{1}(\ww) [k] := \lim_{\varepsilon \to 0} \frac{X_{1}(\ww +(\varepsilon k))}{\varepsilon} = \int_0^1 \sum_i DV_i(X_s) J_{1 \leftarrow s}{(\ww)} dk^i_s, \]
 where the last integral is a Young integral and $ J_{t \leftarrow s}$ is the Jacobian of the RDE flow between $s$ and $t$, namely the ($\R^{n \times n}$-valued) solution to the linear RDE
\begin{equation*}\label{eq:rde-jacobian} 
J_{t\leftarrow s} = Id_{\R^n} + \int_s^t  \sum_{i=1}^d DV_i(X_u) J_{u\leftarrow s} d\ww_u^i. 
\end{equation*}

Note that the pair $(X,J)$ is solution to a RDE driven by $\ww$, and in particular is a continuous function of $\ww$ in rough path metric. This implies, in particular, that the derivative above is locally Lipschitz in the sense that for any $M>0$, there exists $C>0$ such that, for $\ww, \ww' \in \cC^{p-var}_g$ with $\| \ww \|_{p-var} , \| \ww' \|_{p-var} \leq M$, it holds that
 \begin{equation*} \label{eq:DXLip}
  \left| d_{h}X_{1}(\ww) [k] - d_{h}X_{1}(\ww') [k] \right| \leq C \rho_{p-var}(\ww, \ww') \|k\|_{q-var}.
 \end{equation*}
 
\subsection{Gradient flow bounds} \label{subsec:Loj}

In this section, we fix a Hilbert space $H$ with norm $\left|\cdot\right|$, and we consider a map $L : H\to \R_+$ which is $C^{1,1}_{loc}$, in the sense that it is Fr\'echet-differentiable with gradient $\nabla L$, and
\begin{equation} \label{eq:LC11}
 \forall R >0, \exists C_R>0, \forall x,y \in H \mbox{ with } |x|,|y| \leq R, \quad \left| (\nabla L)(x)- (\nabla L)(y) \right| \leq C_R |x-y|.
\end{equation}
We consider the associated gradient flow
\begin{equation} \label{eq:gfH}
\dot{x}(t) = -\nabla L(x(t)), \;\;\; x(0) = x_0 \in H.
\end{equation}
\begin{proposition} \label{prop:gfdef}
Under the above assumptions, for any initial condition $x_0$, the gradient flow \eqref{eq:gfH} admits a unique global solution $x=(x(t))_{t\geq0}$.
\end{proposition}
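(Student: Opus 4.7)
The plan is to combine the Cauchy-Lipschitz theorem in Hilbert space (for local existence and uniqueness) with the classical energy identity for gradient flows, the latter producing an a priori bound that rules out finite-time blow-up. First I would apply the standard Picard-Lindelöf argument: by \eqref{eq:LC11}, the vector field $-\nabla L$ is locally Lipschitz on $H$, and the contraction-mapping proof of local existence goes through verbatim in any Banach space. This yields a unique maximal solution $x : [0, T^*) \to H$ of \eqref{eq:gfH}, with the blow-up alternative that either $T^* = \infty$, or $\limsup_{t\uparrow T^*} |x(t)| = +\infty$.

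Next I would derive the energy identity. Differentiating along the flow gives
\[ \frac{d}{dt} L(x(t)) = \langle \nabla L(x(t)), \dot x(t) \rangle = - |\nabla L(x(t))|^2 \leq 0, \]
so $t \mapsto L(x(t))$ is non-increasing, and integrating together with $L \geq 0$ yields
\[ \int_0^t |\nabla L(x(s))|^2 \, ds \; = \; L(x_0) - L(x(t)) \; \leq \; L(x_0). \]
By Cauchy-Schwarz this produces the displacement bound
\[ |x(t) - x_0| \leq \int_0^t |\nabla L(x(s))| \, ds \leq \sqrt{t}\left( \int_0^t |\nabla L(x(s))|^2 \, ds \right)^{1/2} \leq \sqrt{t\, L(x_0)}, \]
so that $|x(t)|$ remains bounded on every finite interval. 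Combined with the blow-up alternative above, this forces $T^* = +\infty$, establishing global existence.

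There is essentially no real obstacle here: the statement is a classical textbook fact for gradient flows in Hilbert space, and is recorded in the paper only to serve as a foundation for the more delicate convergence analysis to come. The single substantive input beyond the Banach-space Picard-Lindelöf theorem is the non-negativity $L \geq 0$, which produces the uniform-in-time control on the $L^2$-norm of $\nabla L$ along the trajectory and hence, via Cauchy-Schwarz, the $\sqrt{t\, L(x_0)}$ bound on displacement from the initial condition.
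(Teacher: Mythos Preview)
Your proof is correct and follows essentially the same approach as the paper: local well-posedness via Cauchy--Lipschitz from \eqref{eq:LC11}, then the energy identity combined with Cauchy--Schwarz to obtain the a priori bound $|x(t)-x_0|\leq \sqrt{t\,L(x_0)}$, which precludes finite-time blow-up. The only difference is cosmetic---you spell out the blow-up alternative and the energy identity more explicitly than the paper does.
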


\begin{proof}
Local existence and uniqueness follow from standard Cauchy-Lipschitz theory, due to the regularity assumption on $L$. To prove that the solution is global, we use Cauchy-Schwarz inequality to obtain the a priori bound. For any $t\geq 0$ (at which the solution is defined),
\[  \left| x(t) - x_0 \right| \leq \int_0^t |\dot{x}(s)| ds \leq \left( \int_0^t  \left| \nabla L(x(s)) \right|^2 ds \right)^{1/2} t^{1/2} = \left( L(x_0) - L(x(t)) \right) t^{1/2}, \]
and since $L \geq 0$ it follows that the solution cannot blow up in finite time.
\end{proof}

Recall that a nonnegative function $L$ satisfies the classical Polyak-\L{}ojasiewicz criterion if $| \nabla L | \geq c \sqrt{L}$ for some $c>0$, in which case gradient flow trajectories converge to minima. We will use the following extension of this fact, where the constant is allowed to degenerate at infinity. The argument is a simple variant of \L{}ojasiewicz's argument \cite{Loj63,Loj82} (see also \cite{KMP00} for a presentation).%, where the constant $c$ in the inequality is allowed to decrease as $|x|$ increases.

\begin{proposition} \label{prop:Loj}
Let $L : H \to \R_+$ be $C^2$, and let $c:\R_+\to  \R_+$ be nonincreasing and s. t.
\begin{equation} \label{eq:crLoj}
 \forall r \geq 0, \quad c(r) \leq  \inf \left\{  \frac{\left| \nabla L(x) \right| }{\sqrt{L(x)}}, \;\; \; |x| \leq r\right\} 
\end{equation}
(with convention $\frac{0}{0}=\infty$), and denote
\[ C(z) = \int_0^z {c}(v) dv. \]
Then, if $(x_t)_{t\geq 0}$ is a solution to the gradient flow \eqref{eq:gfH}, it holds that
\begin{equation} \label{eq:LojBound}
 \forall t \geq 0, \quad C \left( |x_0| +  \int_0^t  |\dot{x}_s| ds \right)   + 2 \sqrt{L}(x_t) \leq C(|x_0|)+  2 \sqrt{L}(x_0).
  \end{equation}
  In particular, if it holds that
  \begin{equation} \label{eq:CR}
   \mbox{ For some } R>0, \quad C(R) > C(|x_0|)+  2 \sqrt{L}(x_0),
  \end{equation}
   then
   \[  
   \exists x_{\infty} \in H \mbox{ with } |x_{\infty}| < R, \; L(x_{\infty}) =0, \mbox{ and }\lim_{t \to \infty} x(t) = x_{\infty}.
  \]
  More precisely, we then have
\begin{equation} \label{eq:decayL}
\forall t \geq 0, \quad L(x(t)) \leq L(x_0) e^{-c(R)^2 t}, 
\end{equation}  
\begin{equation} \label{eq:gfconvRate}
\forall t \geq 0, \quad \left| x(t) - x_{\infty} \right|  \leq \frac{2 \sqrt{L}(x_0)}{c(R)} e^{-c(R)^2 t/2}. 
\end{equation}
\end{proposition}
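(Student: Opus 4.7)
The plan is a direct variant of the classical \L{}ojasiewicz argument, with the constant $c$ replaced by the radius-dependent function $c(r)$, paired with the triangle inequality $|x_t| \leq |x_0| + \int_0^t |\dot{x}_s|\,ds =: r(t)$ to locate the trajectory in a ball on which the lower bound on $|\nabla L|/\sqrt{L}$ is effective. The starting observation is that, assuming $L(x_t)>0$ (otherwise the flow is stationary and there is nothing to prove), differentiating along \eqref{eq:gfH} gives
\[
\frac{d}{dt}\,2\sqrt{L(x_t)} \;=\; \frac{\langle \nabla L(x_t), \dot{x}_t\rangle}{\sqrt{L(x_t)}} \;=\; -\,|\dot{x}_t|\,\frac{|\nabla L(x_t)|}{\sqrt{L(x_t)}}.
\]
Since $c$ is nonincreasing and $|x_t|\leq r(t)$, the assumption \eqref{eq:crLoj} yields $|\nabla L(x_t)|/\sqrt{L(x_t)} \geq c(r(t))$, and using $\dot{r}(t)=|\dot{x}_t|$ one rewrites the right hand side as $-\dot{r}(t)\,c(r(t)) = -\frac{d}{dt} C(r(t))$. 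Integrating from $0$ to $t$ yields precisely \eqref{eq:LojBound}.

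For the second part of the statement, assume \eqref{eq:CR} holds. Since $C$ is nondecreasing (as $c\geq 0$) and $2\sqrt{L(x_t)} \geq 0$, inequality \eqref{eq:LojBound} forces $C(r(t)) \leq C(|x_0|)+2\sqrt{L(x_0)} < C(R)$, so $r(t) < R$ for all $t\geq 0$, hence in particular $|x_t|<R$. Now the uniform bound $|\nabla L(x_t)|^2 \geq c(R)^2 L(x_t)$ gives $\frac{d}{dt}L(x_t) = -|\nabla L(x_t)|^2 \leq -c(R)^2 L(x_t)$, from which Gronwall yields \eqref{eq:decayL}. To get convergence and the rate \eqref{eq:gfconvRate}, redo the first computation with the cruder bound $|\nabla L(x_s)|/\sqrt{L(x_s)} \geq c(R)$: integrating from $t$ to $T$ gives
\[
c(R) \int_t^T |\dot{x}_s|\,ds \;\leq\; 2\sqrt{L(x_t)} - 2\sqrt{L(x_T)} \;\leq\; 2\sqrt{L(x_t)} \;\leq\; 2\sqrt{L(x_0)}\,e^{-c(R)^2 t/2}.
\]
This makes $s \mapsto \dot{x}_s$ integrable on $[0,\infty)$, so $x(t)$ is Cauchy and converges to some $x_{\infty}$ with $|x_{\infty}|\leq R$; letting $T\to\infty$ in the display above yields \eqref{eq:gfconvRate}, and continuity of $L$ together with \eqref{eq:decayL} gives $L(x_{\infty})=0$.

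There is no real obstacle beyond the fact that the argument divides by $\sqrt{L}$, but this is harmless: as soon as $L(x_{t_0})=0$ one has $\nabla L(x_{t_0})=0$ (since $L\geq 0$), so the trajectory is constant from $t_0$ onwards and the claims are trivial. Up to restricting to the (open) set $\{t:L(x_t)>0\}$, all manipulations above are justified by the $C^2$ regularity of $L$ (which makes $t \mapsto 2\sqrt{L(x_t)}$ differentiable there), and the Lipschitz-type assumption \eqref{eq:LC11} is only used implicitly through Proposition~\ref{prop:gfdef} which provides a global solution to \eqref{eq:gfH}.
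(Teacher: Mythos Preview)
Your proof is correct and follows essentially the same approach as the paper's: differentiate the quantity $C(r(t)) + 2\sqrt{L(x_t)}$ along the flow, use the monotonicity of $c$ together with $|x_t|\le r(t)$ to see it is nonincreasing, then deduce the bound $r(t)<R$ under \eqref{eq:CR}, apply Gronwall for \eqref{eq:decayL}, and recover the rate \eqref{eq:gfconvRate} by redoing the differential inequality with the constant $c(R)$. The only cosmetic differences are that the paper differentiates the full left-hand side of \eqref{eq:LojBound} in one stroke, and derives \eqref{eq:gfconvRate} via $c(R)\,|x_t-x_\infty|\le C(|x_t|+\int_t^\infty|\dot x_s|\,ds)-C(|x_t|)\le 2\sqrt{L(x_t)}$ (i.e.\ by reapplying \eqref{eq:LojBound} from time $t$), which is the same estimate you obtain by integrating the cruder inequality.
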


\begin{proof}
To prove \eqref{eq:LojBound}, we differentiate the quantity appearing on the l.h.s. and obtain
\begin{align*}
 |\dot{x}_t | c \left(  |x_0| + \int_0^t  |\dot{x}_s| ds \right) -  \frac{\left| \nabla L(x_t) \right|^2}{\sqrt{L}(x_t)}  
\end{align*}
which is nonpositive since the second term is 
\[ | \dot{x}_t| \frac{\left| \nabla L(x_t) \right|}{\sqrt{L}(x_t)} \geq | \dot{x}_t|  c(|x_t|) \geq |\dot{x}_t | c \left(  |x_0| + \int_0^t  |\dot{x}_s| ds \right). \]

If \eqref{eq:CR} holds, then the trajectory $(x_t)_{t \geq 0}$ has finite length and is bounded by $R$, and, in particular, it converges to a limit $x_\infty$ with $|x_{\infty}| \leq R$. 

The inequality \eqref{eq:decayL} is an immediate consequence of Gronwall's Lemma, and it then further implies $L(x_\infty)=0$. Finally, \eqref{eq:gfconvRate} follows from
\begin{align*}
c(R) \left| x_t - x_{\infty} \right| \leq C\left( |x_t| + \int_t^\infty |\dot{x}_s| \right) - C\left( |x_t| \right) \leq 2 \sqrt{L(x_t)},
\end{align*}
where the first inequality follows from \eqref{eq:LojBound} (starting at time $t$ instead of $0$).
\end{proof}

We now state some simple extensions or special cases of the above. We first note that if the \L{}ojasiewicz inequality is satisfied on bounded sets, then convergence of the gradient flow is equivalent to its boundedness.

\begin{proposition} \label{cor:GFbdd}
In the setting of the previous theorem, assume that $c(r) > 0$ for all $r>0$. Then, if the gradient flow trajectory $(x_t)_{t\geq 0}$ is bounded in $H$, it converges to a minimum.
\end{proposition}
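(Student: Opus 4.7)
The plan is to observe that, once we know the trajectory is bounded, the degenerate \L{}ojasiewicz inequality \eqref{eq:crLoj} becomes a uniform (non-degenerate) Polyak-\L{}ojasiewicz inequality along the trajectory, and then essentially reuse the argument of Proposition \ref{prop:Loj}. Concretely, let $R := \sup_{t \geq 0} |x_t| < \infty$ by assumption. Since $c$ is nonincreasing and $c(R) > 0$ by hypothesis, \eqref{eq:crLoj} yields
\[
|\nabla L(x_t)| \geq c(R) \sqrt{L(x_t)} \quad \text{for all } t \geq 0.
\]

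From here I would proceed in two standard steps, exactly as in the classical Polyak-\L{}ojasiewicz argument, which are already embedded in the proof of Proposition \ref{prop:Loj}. First, exponential decay of $L$: differentiating gives
\[
\frac{d}{dt} L(x_t) = -|\nabla L(x_t)|^2 \leq -c(R)^2 L(x_t),
\]
so $L(x_t) \leq L(x_0) e^{-c(R)^2 t} \to 0$. Second, finiteness of the trajectory length: since $|\dot{x}_t| = |\nabla L(x_t)|$,
\[
-\frac{d}{dt}\bigl( 2 \sqrt{L(x_t)} \bigr) = \frac{|\nabla L(x_t)|^2}{\sqrt{L(x_t)}} \geq c(R) |\nabla L(x_t)| = c(R) |\dot{x}_t|,
\]
and integrating in time gives
\[
\int_0^{\infty} |\dot{x}_t|\, dt \leq \frac{2 \sqrt{L(x_0)}}{c(R)} < \infty.
\]

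Finite length means $(x_t)$ is Cauchy in $H$, hence converges to some $x_\infty \in H$, and continuity of $L$ together with $L(x_t) \to 0$ gives $L(x_\infty) = 0$. There is no real obstacle to overcome here: the only subtlety is the sleight of hand of using the global bound $R$ on $|x_t|$ to bypass the need for condition \eqref{eq:CR} in Proposition \ref{prop:Loj}, since we no longer need a quantitative a priori control of the length to guarantee boundedness of the trajectory.
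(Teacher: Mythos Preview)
Your proof is correct and follows essentially the same route as the paper: both arguments observe that boundedness of the trajectory by some $R$ (or $\rho$) lets one replace the degenerate function $c(\cdot)$ by the constant $c(R)>0$ along the trajectory, reducing to the classical Polyak--\L{}ojasiewicz situation. The paper phrases this as ``take $c\equiv c(\rho)$ in Proposition~\ref{prop:Loj}, so that $C(z)=cz$ satisfies \eqref{eq:CR}'', whereas you unpack the resulting PL argument explicitly; the content is the same.
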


\begin{proof}
This follows from noticing that, in the proof of Proposition \ref{prop:Loj}, the inequality \eqref{eq:crLoj} only needs to hold on the gradient trajectory $(x(t))_{t \geq 0}$. If the latter is bounded by $\rho$, we can simply take $c \equiv c(\rho) > 0$, for which $C(z) = c z$ clearly satisfies \eqref{eq:CR}.
\end{proof}

We also have as a special case the following local convergence criterion.

\begin{corollary} \label{cor:LocLoj}
Let $L:H\to \R_+$ be $C^2$, and $x_0 \in H$ be s.t. for some $R,c>0$,
\[ \forall x \in H, \;\; |x - x_0 | \leq R\;\; \Rightarrow \;\; |\nabla L(x)| \geq c \sqrt{L(x)}, \]
where in addition it holds that
\begin{equation*} \label{eq:LocCrit}
2 \sqrt{L}(x_0) \leq c R,
\end{equation*}
then the solution of \eqref{eq:gfH} satisfies $\lim_{t \to \infty} x_t = x_{\infty}$ in $H$ with $|x_\infty| < R$ and $L(x_{\infty}) = 0$.

%In particular, if $\hat{x}_\infty$ is such that $L(\hat{x}_\infty) = 0$ and, for some $r,c>0$,
%\[ \forall x \in H, \;\; |x - \hat{x}_\infty | \leq r \;\; \Rightarrow \;\; |\nabla L(x)| \geq c \sqrt{L(x)}. \]
%Then, there exists $r'>0$ such that, for any $x_0 \in H$ with $|x_0 - \hat{x}_{\infty}| \leq r'$, there exists $x_{\infty}$ (depending on $x_0$) s.t. $L(x_{\infty}) = 0$, and
\end{corollary}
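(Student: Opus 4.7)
The corollary is a localisation of Proposition \ref{prop:Loj} around the initial point $x_0$, and I would prove it by essentially rerunning the \L{}ojasiewicz argument but restricted to the ball $\overline{B}(x_0,R)$. The key observation is that the \L{}ojasiewicz inequality $|\nabla L| \geq c\sqrt{L}$ now holds on a translated ball (around $x_0$) rather than on a centred one, so the global form of Proposition \ref{prop:Loj} does not apply verbatim; however its proof is purely local along the trajectory, and this is what I would exploit.

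The first step is to introduce the stopping time
\[ T := \inf\left\{ t \geq 0 \;:\; |x(t)-x_0| \geq R \right\} \in (0,+\infty]. \]
By hypothesis, on $[0,T)$ the pointwise inequality $|\nabla L(x(t))| \geq c\sqrt{L(x(t))}$ holds, so the same monotonicity computation as in Proposition \ref{prop:Loj} (with the constant rate $c$ and primitive $C(z)=cz$) shows that $c\int_0^t |\dot x(s)|ds + 2\sqrt{L(x(t))}$ is nonincreasing in $t$, yielding
\[ c\int_0^t |\dot x(s)|ds + 2\sqrt{L(x(t))} \leq 2\sqrt{L(x_0)} \quad \text{for all } t < T. \]
Combining this with $|x(t)-x_0| \leq \int_0^t |\dot x(s)|ds$ and the hypothesis $2\sqrt{L(x_0)} \leq cR$ gives $|x(t)-x_0| \leq R$ for every $t<T$, which by a continuity argument forces $T=+\infty$. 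Indeed, if $T<\infty$, then by continuity $|x(T)-x_0|=R$ and the preceding chain of inequalities becomes saturated, which forces $L(x(T))=0$; but then the flow is stationary from $T$ onward and convergence holds trivially with $x_\infty = x(T)$.

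Once $T=+\infty$, the trajectory has finite length bounded by $2\sqrt{L(x_0)}/c \leq R$, so it converges in $H$ to some $x_\infty$ with $|x_\infty - x_0| \leq R$. Exponential decay of $L$ along the flow, namely $L(x(t)) \leq L(x_0)e^{-c^2 t}$, then follows from Gronwall applied to $\tfrac{d}{dt}L(x(t)) = -|\nabla L(x(t))|^2 \leq -c^2 L(x(t))$, and so $L(x_\infty)=0$. The argument is essentially routine; the only mild wrinkle is that the hypothesis $2\sqrt{L(x_0)} \leq cR$ is non-strict, so that the edge case $2\sqrt{L(x_0)}=cR$ (in which the trajectory could a priori reach $\partial B(x_0,R)$) must be handled via the stationarity argument above, which shows that hitting the boundary can only happen at a zero of $L$.
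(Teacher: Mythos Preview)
Your argument is correct, but it is more laborious than necessary and rests on a mistaken premise. You write that ``the global form of Proposition \ref{prop:Loj} does not apply verbatim'' because the \L{}ojasiewicz inequality holds on a ball around $x_0$ rather than the origin; however, the paper simply translates so that $x_0 = 0$ and then applies Proposition \ref{prop:Loj} directly with the step function
\[
c(r) = \begin{cases} c, & r \leq R, \\ 0, & r > R, \end{cases}
\]
for which $C(R) = cR$ and $C(|x_0|) = 0$. The criterion \eqref{eq:CR} then becomes $cR > 2\sqrt{L(x_0)}$, and the conclusion follows immediately. So rather than rerunning the \L{}ojasiewicz argument with a stopping time, the paper packages everything into the already-proven proposition via a one-line reduction.

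That said, your approach does buy something: you are careful about the non-strict hypothesis $2\sqrt{L(x_0)} \leq cR$, observing that if the trajectory ever reaches $\partial B(x_0,R)$ then the chain of inequalities is saturated and $L$ must vanish there, so the flow is stationary. The paper's invocation of Proposition \ref{prop:Loj} technically requires the strict inequality in \eqref{eq:CR}, so your explicit treatment of the boundary case is a genuine (if minor) refinement. Note, however, that in that boundary scenario you obtain $|x_\infty - x_0| = R$, not $< R$, so the strict bound in the statement is not quite recovered in the equality case.
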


\begin{proof}
By translating we may assume $x_0=0$, and we then apply Proposition \ref{prop:Loj} with
\[ c(r) = \begin{cases} c, & r \leq R \\0, & r > R \end{cases}.\]\end{proof}
For future reference, we record the special case where $c(r)$ decays as $\frac{1}{1+r}$ at infinity, which is the one that we will use in our global  convergence results below.
\begin{corollary} \label{cor:c1r}
Let $L : H \to \R_+$ be $C^2$, and assume that, for some $c > 0$, it holds that
\[
\forall x \in H, \;\; \left| \nabla L (x) \right| \geq c \frac{ \sqrt{L(x)}}{1 + |x|}.
\]
Then, if $(x_t)_{t\geq 0}$ is a solution to the gradient flow \eqref{eq:gfH}, it holds that   
\[  
   \exists x_{\infty} \in H \mbox{ with } |x_{\infty}|  \leq  e^{\frac{2}{c} \sqrt{L}(0)} -1 , \; L(x_{\infty}) =0, \mbox{ and }\lim_{t \to \infty} x(t) = x_{\infty},
  \]
  \end{corollary}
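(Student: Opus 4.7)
The plan is to apply Proposition \ref{prop:Loj} directly with the choice
\[
c(r) = \frac{c}{1+r},
\]
which is clearly nonincreasing, and satisfies \eqref{eq:crLoj} because the standing hypothesis gives, for any $x$ with $|x| \leq r$, $|\nabla L(x)|/\sqrt{L(x)} \geq c/(1+|x|) \geq c/(1+r)$. Integrating yields
\[
C(z) = \int_0^z \frac{c}{1+v}\, dv = c \, \ln(1+z),
\]
which is unbounded as $z \to \infty$. Hence condition \eqref{eq:CR} of Proposition \ref{prop:Loj} is satisfied for every $R$ strictly larger than $e^{2\sqrt{L(x_0)}/c}(1+|x_0|) - 1$, so all the conclusions of that proposition apply: the flow has finite length, converges to some limit $x_\infty$, and $L(x_\infty) = 0$.

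The only remaining task is to turn the abstract length bound \eqref{eq:LojBound} into the explicit estimate of the statement. With $x_0 = x(0) = 0$ (so that the ``$L(0)$'' in the statement is $L(x_0)$), inequality \eqref{eq:LojBound} reads
\[
c\,\ln\!\left(1 + \int_0^t |\dot{x}_s|\, ds\right) + 2\sqrt{L(x_t)} \;\leq\; 2\sqrt{L(0)},
\]
and dropping the nonnegative term $2\sqrt{L(x_t)}$ and exponentiating gives
\[
\int_0^\infty |\dot{x}_s|\, ds \;\leq\; e^{\frac{2}{c}\sqrt{L(0)}} - 1.
\]
Since $|x_\infty| = |x_\infty - x_0| \leq \int_0^\infty |\dot{x}_s|\, ds$, the claimed bound on $|x_\infty|$ follows by taking $R \downarrow e^{\frac{2}{c}\sqrt{L(0)}} - 1$ in the ``$|x_\infty| < R$'' conclusion of Proposition \ref{prop:Loj}.

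There is essentially no obstacle here: the result is a direct specialisation of Proposition \ref{prop:Loj} to the degeneration rate $c/(1+r)$, whose primitive is logarithmic and hence always coercive, so the ``coercivity'' condition \eqref{eq:CR} is automatic for some finite $R$. The only minor point of care is the implicit convention $x(0)=0$ that makes the exponent on the right-hand side match $\sqrt{L(0)}$ rather than the more general $(1+|x_0|)e^{2\sqrt{L(x_0)}/c}-1$ one would obtain for arbitrary starting point.
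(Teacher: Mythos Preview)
Your proof is correct and follows exactly the same approach as the paper, which simply states ``This follows from taking $c(r) = c/(1+r)$ in Proposition \ref{prop:Loj}.'' You have merely filled in the straightforward details (computing $C(z) = c\ln(1+z)$, verifying \eqref{eq:CR}, and extracting the explicit bound on $|x_\infty|$ from \eqref{eq:LojBound}), including the correct observation that the statement implicitly takes $x_0 = 0$.
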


\begin{proof}
This follows from taking $c(r) = c/(1+r)$ in Proposition \ref{prop:Loj}.
\end{proof}

\begin{remark} From the estimates in Proposition \ref{prop:Loj}, we see that in the setting of Corollary \ref{cor:c1r}, it holds that 
\begin{equation*} 
\forall t \geq 0, \quad L(x(t)) \leq L(0) e^{- c e^{ - \frac{2}{c} \sqrt{L}(0)} t} , 
\end{equation*}  
\begin{equation*} 
\forall t \geq 0, \quad  \left| x(t) - x_{\infty} \right|  \leq \frac{2}{c}  \sqrt{L}( 0 ) e^{\frac{2}{c} \sqrt{L}(0)} e^{-c e^{ - \frac{4}{c} \sqrt{L}(0)}  t/2}. 
\end{equation*}
\end{remark}

\begin{remark}
The Polyak-\L{}ojasiewicz inequality is the simplest case of so-called Kurdyka-\L{}ojasiewicz inequalities \cite{Kur98}. These are written as
\[ \left| \nabla (K \circ L ) \right| \geq c > 0 \]
where $K : [0, \infty) \to \R_+$ is  $C^1$ on $(0,+\infty)$. (The special case $K = \vert\cdot\vert^{1-\theta}$ with $\theta \in (0,1)$ corresponds to the \L{}ojasiewicz inequalities $|\nabla L | \geq c L^{\theta}$.) 

It is easy to see that the convergence criterion above could be adapted to this more general form (simply by replacing $\sqrt{\cdot}$ by $K$ in the proof), but for simplicity we stick with this simplest case in the rest of the paper. 
\end{remark}

\section{Formulation of the problem} \label{sec:defGF}

To formulate the problem, we fix:
\begin{itemize}
\item a geometric rough path $\w$ $\in$ $C_g^{p-var}$  (the \emph{initial condition}).
\item a Hilbert space $\cH$, which, throughout, we assume to satisfy Complementary Young Regularity, namely $\cH \subset C^{q-var}$, where $\frac{1}{p} + \frac 1 q > 1$.
\end{itemize}
As discussed in Section \ref{subsec:rp}, under this assumption, the Young translation $\w + h$ is well-defined for each $h \in \cH$.

For fixed $x \in \R^n$ and $C^{\infty}_b$ $\R^n$-valued vector fields $V^i$, with $i=1,\ldots, d$, for any $h \in \cH$, we consider $X:= X(\w + h) $ the solution to the RDE
\begin{equation}\label{eq:rde-h}
dX_t = \sum_i V_i (X_t) d (\w + h)^i_t, \;\; X_0=x.
\end{equation}
We fix a \emph{cost} function $g \in C^2(\R^n)$, and consider the \emph{loss} functional
\begin{equation}\label{eq:def-loss}
\mathcal{L}_{\w}(h) = g\left( X_1(\w + h)\right). 
\end{equation}

Recall that, by rough path theory, the map $h \mapsto X_1(\w + h)$ is $C^1$, so that by the chain rule $\mathcal{L}_{\w}:\cH \to \R$ is $C^1$, and its gradient satisfies
\begin{equation} \label{eq:nablaL}
\nabla_{\cH} \cL_{\ww} = (\nabla g)(X_1) \cdot_{\R^n} \nabla_{\cH} X_1.
\end{equation}

We will then study the gradient flow $(h(s))_{s \geq 0}$ defined by
\begin{equation} \label{eq:roughgf}
h(0)=0,  \quad \forall s\geq 0,\; \frac{d}{ds} h(s)= - \nabla_{\cH} \cL_{\w}(h(s)) 
\end{equation}
\begin{proposition}
Under the above assumptions, \eqref{eq:roughgf} admits a unique solution, which is global in (forward) time.
\end{proposition}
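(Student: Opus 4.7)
The plan is to reduce the statement to Proposition \ref{prop:gfdef}, applied to $H=\cH$ and $L=\cL_\ww$. For this reduction we need two things: that $\cL_\ww$ is $C^{1,1}_{loc}$ in the sense of \eqref{eq:LC11}, and that $\cL_\ww$ is bounded below (which is implicit in the ``loss'' terminology, since the motivating choice $g(z)=|y-z|^2$ and more generally any nonnegative $g$ guarantee $\cL_\ww \geq 0$; if needed, one replaces $g$ by $g - \inf g$, which leaves $\nabla \cL_\ww$ unchanged and hence the flow unchanged).

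The main task is therefore the regularity statement, which is essentially a repackaging of the results recalled in Section \ref{subsec:rp}. First, I would check that the Young translation $T:h\in\cH \mapsto \ww + h \in \cC_g^{p-var}$ is locally Lipschitz. This follows from the local Lipschitz continuity of Young translation (stated in Section \ref{subsec:rp}) combined with the continuous embedding $\cH \hookrightarrow C^{q-var}$ guaranteed by Complementary Young Regularity. Next, the RDE solution map $\ww' \mapsto (X(\ww'), J_{1\leftarrow\cdot}(\ww'))$ is itself locally Lipschitz continuous in the $p$-variation rough path metric (both $X$ and $J$ solve a joint RDE driven by $\ww'$). Composing, the map $h \mapsto (X_\cdot(\ww+h), J_{1\leftarrow\cdot}(\ww+h))$ is locally Lipschitz from $\cH$ to $C^{p-var}\times C^{p-var}$.

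From the formula
\[ d_h X_1(\ww+h)[k] = \int_0^1 \sum_i DV_i(X_s(\ww+h)) J_{1\leftarrow s}(\ww+h)\, dk^i_s, \]
(a Young integral, well-defined since $k \in \cH \subset C^{q-var}$ and the integrand is in $C^{p-var}$ with $1/p+1/q>1$), together with the continuity of the Young integral in its two arguments, I would deduce that $\nabla_\cH X_1$ is locally Lipschitz from $\cH$ to $\cH$. Finally, since $g\in C^2$ and $h\mapsto X_1(\ww+h)$ maps bounded sets of $\cH$ into bounded subsets of $\R^n$ (by the continuity just established), the chain rule applied to \eqref{eq:nablaL} yields that $\nabla_\cH \cL_\ww$ is locally Lipschitz, i.e.\ \eqref{eq:LC11} holds.

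With these two ingredients, Proposition \ref{prop:gfdef} applies verbatim and gives existence and uniqueness of a global forward solution to \eqref{eq:roughgf}. The only mildly delicate point is the local-Lipschitz assembly in the second paragraph, but each factor is already a quoted result from Section \ref{subsec:rp}, so no new estimate is required.
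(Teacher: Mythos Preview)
Your proposal is correct and follows essentially the same route as the paper: reduce to Proposition \ref{prop:gfdef} by verifying that $\nabla_\cH \cL_\ww$ is locally Lipschitz, using the local Lipschitz continuity of Young translation, of the RDE solution map $(X,J)$, and the chain rule with $g\in C^2$. The paper's proof is just a compressed version of your chain of estimates, written as a single string of inequalities; your remark about $\cL_\ww$ being bounded below is a fair point that the paper leaves implicit.
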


\begin{proof}
We check \eqref{eq:LC11}. It follows from standard rough path results (see subsection \ref{subsec:rp}) and \eqref{eq:nablaL}  that
\begin{align*}
\| \nabla_{\cH} \cL_{\w}(h) & - \nabla_{\cH} \cL_{\w}(h') \|_{\cH} \\
\begin{split}
	&\leq C\left( | X_1(\w + h) |, | X_1 (\w +h') | \right) \big( | X_1(\w + h) - X_1 (\w +h') | \\
	&\qquad\qquad\qquad\qquad\qquad\qquad\qquad\qquad\qquad+ \|\nabla_{\cH} X_1(\w+h) -  \nabla_{\cH} X_1(\w+h') \|_{\cH}\big) 
\end{split}
\\
&\leq C\left( \| \w \|_{p-var}, \| h \|_{q-var}, \| h' \|_{q-var} \right) \rho_{p-var} (\w + h, \w + h') \\
&\leq C\left( \| \w \|_{p-var}, \| h \|_{q-var}, \| h' \|_{q-var} \right) \| h - h'\|_{q-var} \\
&\leq C\left( \| \w \|_{p-var}, \| h \|_{\cH}, \| h' \|_{\cH} \right) \| h - h'\|_{\cH},
\end{align*}
where the functions $C(\cdot)$ are finite but may vary from line to line, and where we have used complementary Young regularity in the final inequality. The result then follows from Proposition \ref{prop:gfdef}.
\end{proof}

Recall the formula for the gradient of $X_1$ (cf subsection \ref{subsec:rp}) : for any $k \in \cH \subset C^{q-var}([0,1],\R^d)$ 
\begin{equation}\label{eq:formula-eq-gradient}
\left\langle \nabla_{\cH} X_1, k \right\rangle_{\mathcal{H}} = \sum_{i=1}^d \int_0^1  J_{1 \leftarrow t} V_i(X_t) dk^i_t,
\end{equation}
where $ J_{1 \leftarrow t}$ is the Jacobian of the RDE flow between $t$ and $1$, namely the ($\R^{n \times n}$-valued) solution to
\begin{equation}\label{eq:formula-eq-jacobian}
J_{t\leftarrow t} = Id_{\R^n}, \;\; d_u J_{u\leftarrow t} = \sum_{i=1}^d DV_i(X_u) J_{u\leftarrow t} d(\w + h)_u^i. 
\end{equation}

Recall that, writing $X_1=(X^k_1)_{k=1,\ldots, n}$, the Malliavin matrix (for the functional $X_1$) is given by
\begin{equation*}
\mathcal{M}_{\w}(h) = \left( \left\langle \nabla_{\cH} X^i,  \nabla_{\cH} X^j  \right\rangle_{\cH}\right),_{1\leq i, j \leq n}.
\end{equation*}
This is a symmetric and nonnegative matrix, and consider its smallest eigenvalue $c_{\w}(h)$, namely
\begin{equation} \label{eq:defc}
c_{\w}(h)^2 := \inf_{|\xi|_{\R^n}=1} \mathcal{M} \xi \cdot_{\R^n} \xi = \inf_{|\xi|_{\R^n}=1}   \left\| \xi \cdot_{\R^n} \nabla_{\cH} X_1 \right\|_{\cH}^2.
\end{equation}

The main tool in our analysis will be the observation below, that if $g$ satisfies a \L{}ojasiewicz inequality, so does $\cL_{\ww}$ locally, with  constant proportional to $c_{\w}(h)$. 

\begin{proposition} \label{prop:LojL}
Let $\w, \cH$ as above. Then it holds that
\[ \forall h \in \cH, \quad \left\| \nabla \cL_{\ww}(h) \right\|_{\cH} \geq c_{\w}(h)   | (\nabla g)(X_1( \w+h)) |_{\modif{\R^n}}  .\]
 In particular, if for some $c_g > 0$, it holds that $| \nabla g | \geq c_g \sqrt{g}$ on $\R^n$, then
\[ \forall h \in \cH, \quad \left\| \nabla \cL_{\ww}(h) \right\|_{\cH} \geq  c_g c_{\w}(h) \sqrt{ \cL_{\ww}(h)}.\]
\end{proposition}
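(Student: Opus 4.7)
The proof is essentially a direct unpacking of the chain rule formula \eqref{eq:nablaL} together with the definition of $c_{\w}(h)$ as the smallest eigenvalue of the Malliavin matrix. The key observation is that $\nabla_{\cH}\cL_{\ww}(h)$ is a linear combination (with coefficients given by the components of $\nabla g(X_1(\w+h))$) of the components of $\nabla_{\cH} X_1$, so its squared $\cH$-norm is exactly a quadratic form in $\mathcal{M}_{\w}(h)$.

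Concretely, the first step is to fix $h \in \cH$, set $\xi := (\nabla g)(X_1(\w+h)) \in \R^n$, and write
\[
\nabla_{\cH}\cL_{\ww}(h) \;=\; \sum_{i=1}^n \xi_i \, \nabla_{\cH} X_1^i
\]
from \eqref{eq:nablaL}. The second step is to expand the squared $\cH$-norm:
\[
\bigl\|\nabla_{\cH}\cL_{\ww}(h)\bigr\|_{\cH}^2
= \sum_{i,j=1}^n \xi_i \xi_j \bigl\langle \nabla_{\cH} X_1^i,\nabla_{\cH} X_1^j \bigr\rangle_{\cH}
= \xi^\top \mathcal{M}_{\w}(h)\, \xi.
\]
The third step is to use symmetry and nonnegativity of $\mathcal{M}_{\w}(h)$ together with the variational characterisation \eqref{eq:defc}, which gives $\xi^\top \mathcal{M}_{\w}(h)\,\xi \geq c_{\w}(h)^2 |\xi|^2_{\R^n}$. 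Taking square roots yields the first inequality.

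For the second claim, I would just substitute the assumed Polyak-Łojasiewicz bound $|\nabla g(z)| \geq c_g \sqrt{g(z)}$ (valid at $z = X_1(\w+h)$) into the first inequality, using that $g(X_1(\w+h)) = \cL_{\ww}(h)$ by definition \eqref{eq:def-loss}. There is no real obstacle here: the statement is really just a linear-algebra identity combined with the chain rule, and all the analytic content (regularity of $\nabla_{\cH} X_1$, well-definedness of $\mathcal{M}_{\w}(h)$) has already been set up in Subsection \ref{subsec:rp} and in the preceding discussion. The only thing worth being slightly careful about is making sure that $\xi$ is a deterministic vector in $\R^n$ at fixed $h$ (it does not depend on the "direction" $k$ against which we pair $\nabla_{\cH} X_1$), so that pulling it out of the inner product is legitimate.
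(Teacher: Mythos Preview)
Your proof is correct and follows exactly the approach the paper has in mind: the paper's own proof is a single sentence stating that the result is immediate from the definition of $c_{\w}$ and the chain rule expression \eqref{eq:nablaL}, and your write-up is precisely the detailed unpacking of that sentence via the quadratic form $\xi^\top \mathcal{M}_{\w}(h)\,\xi$.
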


\begin{proof}
This is immediate from the definition of $c_{\w}$ and the expression \eqref{eq:nablaL} for $\nabla_{\cH} \cL_{\ww}$. 
\end{proof}

For $f \in C^{p-var}$, let 
\begin{equation} \label{eq:defHvee}
\left\| f \right\|_{\cH^{\vee}} = \sup_{ \| h \|_{\mathcal{H}} \leq 1} \int_0^1 f(t) dh(t).
\end{equation}
This is a semi-norm on $C^{p-var}$, and we will frequently assume that it is actually a norm (this is a non-degeneracy assumption on $\cH$, which holds if for instance $C^\infty \subset \cH$ ).

(Although we will not use this, it may be useful to the stochastic analysis reader to notice that, if $\cH$ is the Cameron-Martin space associated to a Gaussian process $X$, then $\cH^{\vee}$ is the norm associated to the corresponding Paley-Wiener integral, namely $\|f\|_{\cH^\vee} = \| \int_0^1 f_t dX_t \|_{L^2(\P)}$).

We now record a useful expression for the quantities $\left\| \xi \cdot \nabla_{H^1} X_1 \right\|_{\mathcal{H}}$. 
\begin{lemma}
In the above setting, for any $\xi \in \R^n$, it holds that
\begin{equation} \label{eq:FormulaGradient}\left\| \xi \cdot \nabla_{\cH} X_1 \right\|_{\mathcal{H}} =  \left\|   t \mapsto \left( \langle J_{1 \leftarrow t} V_i(X_t)  \cdot \xi \right)_{i=1,\ldots,d} \right\|_{\cH^{\vee}}. 
\end{equation}
\end{lemma}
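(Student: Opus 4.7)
The plan is to combine the Riesz representation for the $\cH$-norm with the explicit gradient formula \eqref{eq:formula-eq-gradient}, and then simply read off the result as the definition \eqref{eq:defHvee} of the $\cH^{\vee}$-norm.

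More precisely, I would first dot \eqref{eq:formula-eq-gradient} with $\xi \in \R^n$ and use linearity of the Young integral to bring $\xi$ inside:
\[
\left\langle \xi \cdot \nabla_{\cH} X_1,\, k \right\rangle_{\cH} = \sum_{i=1}^d \int_0^1 \bigl( J_{1 \leftarrow t} V_i(X_t) \cdot \xi \bigr)\, dk^i_t,
\]
valid for every $k \in \cH$. Second, since $\cH$ is a Hilbert space, for any $F \in \cH$ one has $\|F\|_{\cH} = \sup_{\|k\|_{\cH}\leq 1} \langle F, k\rangle_{\cH}$; applying this with $F = \xi \cdot \nabla_{\cH} X_1$ gives
\[
\left\| \xi \cdot \nabla_{\cH} X_1 \right\|_{\cH} = \sup_{\|k\|_{\cH}\leq 1} \sum_{i=1}^d \int_0^1 \bigl( J_{1 \leftarrow t} V_i(X_t) \cdot \xi \bigr)\, dk^i_t.
\]
Third, the right-hand side is, coordinate by coordinate, exactly the quantity \eqref{eq:defHvee} evaluated on the $\R^d$-valued path $t \mapsto \bigl(J_{1\leftarrow t} V_i(X_t) \cdot \xi\bigr)_{i=1,\ldots,d}$, which yields the claim.

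There is essentially no obstacle: the only thing to be mindful of is ensuring the Young integral is well-defined. Under complementary Young regularity, $t \mapsto J_{1\leftarrow t} V_i(X_t)$ has finite $p$-variation (it is a continuous function of $X$ and $J$, which are RDE outputs driven by $\w+h$), and $k \in \cH \subset C^{q-var}$ with $1/p + 1/q > 1$, so each integral $\int_0^1 (J_{1\leftarrow t} V_i(X_t)\cdot \xi)\, dk^i_t$ makes sense as a Young integral. Thus the supremum in the definition of $\|\cdot\|_{\cH^\vee}$ is taken over a class of integrals that are all well-defined, and the identification is immediate.
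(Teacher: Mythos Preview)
Your proof is correct and follows exactly the same approach as the paper: combine the gradient formula \eqref{eq:formula-eq-gradient} with the Hilbert space identity $\|F\|_{\cH} = \sup_{\|k\|_{\cH}\leq 1}\langle F,k\rangle_{\cH}$, and recognise the resulting supremum as the definition \eqref{eq:defHvee} of $\|\cdot\|_{\cH^\vee}$. Your additional remark on the well-definedness of the Young integral is a helpful clarification, but the core argument is identical to the paper's one-line proof.
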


\begin{proof}
This follows from \eqref{eq:formula-eq-gradient} and the fact that  $\left\|  \cdot \right\|_{\mathcal{H}} = \sup_{\| k \|_{\cH} \leq 1} \left\langle \cdot, k\right\rangle_{\cH}$.	
\end{proof}

%
%
%Note that, in the case where $\cH=H^1$, it follows that $\cH^{\vee} =L^2([0,1],\R^d)$ which yields
%\[
%\left\| \xi \cdot \nabla_{} X_1 \right\|_{H^1}^2 = \sum_{i=1}^d \int_0^1 \left \langle J_{1 \leftarrow t} V_i(X_t), \xi\right \rangle^2 dt.
%\]
%
%We will use that, for any vector field $W$,
%\begin{equation}
%\left \langle  J_{1 \leftarrow t} W (X_t), \xi \right\rangle =  \left \langle W(X_1),  p \right\rangle  - \sum_j \int_t^1 \left \langle J_{1 \leftarrow u}   [W, V_j](X_u),  \xi \right\rangle  d (w+ h)^j_u.
%\end{equation}

\section{No saddle-points under true roughness} \label{sec:no-saddles}

 \modif{
In this section, we let $ \ww \in C_g^{p-var}$. Our result will be valid under some roughness conditions on the underlying path, which we now introduce. 

\begin{definition} \label{def:TR}
 Given a control function\footnote{\modif{A function $\omega: \{(t,s) \in [0,1]^2; t\leq s \} \to \R_+$ is a control function if $\omega(t,s) \geq \omega(t,u)+\omega(u,s)$ for all $t\leq u \leq s$.}} $\omega$ s.t. $\| \ww \|_{p-var;[t,s]}^p \leq \omega(t,s)$ for all $t \leq s$ in $[0,1]$, and $\beta \in (\frac{1}{p}, \frac{2}{p}]$, we say that $\ww$ is $(\omega,\beta)$-rough at $t$ if
\begin{equation} \label{eq:TRpvar}
 \forall  0\neq \zeta \in \R^d , \quad \limsup_{s \downarrow t} \frac{  \left|w_{t,s} \cdot \zeta \right|}{\omega(t,s)^{\beta}} = +\infty,
\end{equation}
where $w$ is the level-$1$ component of $\ww$.

In the case where the above holds with $\omega(t,s)=|s-t|$, we say that $\ww$ is $\beta$-H\"older rough at $t$.

If \eqref{eq:TRpvar} holds for a.e. $t$ (and some choice of $(\omega, \beta)$), we say that $\ww$ is a.e. truly rough, resp. $\ww$ is a.e. truly $\beta$-H\"older rough in the case where $\omega(t,s)=|s-t|$ .
\end{definition}

This condition is satisfied a.s. by realisations of (fractional) Brownian motion. More precisely, fBm with Hurst index $H$ is, with probability $1$, a.e. $H$-H\"older rough, see e.g. \cite{FS13}.}

%In this section, we will assume that $\ww$ is a $\alpha$-H\"older rough path, namely
%\[
%\forall 0\leq s \leq t \leq 1, \;\;\; \| \ww_{s,t}\| \leq C|t-s|^{\alpha},
%\]
%and that it is a.e. truly rough in the sense that
%\begin{equation} \label{eq:aeTR}
%\exists \beta < 2\alpha, \mbox{ for a.e. } t \in [0,1), \forall  0\neq \zeta \in \R^d , \quad \limsup_{s \downarrow t} \frac{  \left|w_{t,s} \cdot \zeta \right|}{(s-t)^{\beta}} = +\infty,
%\end{equation}
%where $w$ is the level-$1$ component of $\ww$.

We will also denote by $X:=X^x(\ww)$ the solution to the RDE
\begin{equation*}\label{eq:rde-w}
	dX_t = \sum_i V_i (X_t) d \w^i_t, \;\; X_0=x,
\end{equation*}
and by $J:=J^x(\ww)$ the solution to the RDE 
\begin{equation*}
J_{t\leftarrow t} = Id_{\R^n}, \quad d_u J_{u\leftarrow t} = \sum_{i=1}^d DV_i(X^x_u(\w)) J_{u\leftarrow t} d\w_u^i. 
\end{equation*}

\subsection{Single point control}

\begin{proposition}\label{prop:first-order-condition-point}
Assume that the vector fields $\{V_i\}_{i=1}^d$ are bracket-generating \modif{at $x$} and that $\|\cdot\|_{\cH^\vee}$ is a norm. Let $\ww \in C_g^{p-var}$ be a geometric rough path which is a.e. truly rough \modif{in the sense of Definition \ref{def:TR}}. Then $c_{\ww}(0) > 0$.
\end{proposition}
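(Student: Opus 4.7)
\medskip

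\noindent\textbf{Proof plan.} The quadratic form $\xi \mapsto \|\xi \cdot \nabla_{\cH} X_1\|_{\cH}^2$ is continuous on $\R^n$, so by compactness of the unit sphere it suffices to show that it is strictly positive for every $\xi \neq 0$. Fix such a $\xi$ and suppose, for contradiction, that $\|\xi \cdot \nabla_{\cH} X_1\|_{\cH} = 0$. By \eqref{eq:FormulaGradient} and the assumption that $\|\cdot\|_{\cH^{\vee}}$ is a norm (not merely a seminorm), this forces the continuous function
$$
\varphi^{V_i}(t) \;:=\; \xi \cdot J_{1\leftarrow t} V_i(X_t), \qquad i=1,\ldots,d,
$$
to vanish identically on $[0,1]$. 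The strategy is then the classical Malliavin / Hairer--Pillai bootstrap: propagate this vanishing through successive Lie brackets and conclude via bracket-generation together with invertibility of the Jacobian at $0$.

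\medskip

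\noindent To set up the bootstrap, I would compute, for an arbitrary smooth vector field $W$ on $\R^n$, the rough differential of $\varphi^W(t):=\xi\cdot J_{1\leftarrow t}W(X_t)$. Using the RDE for $X$ and the backward equation $d_t J_{1\leftarrow t} = -J_{1\leftarrow t}\sum_j DV_j(X_t)\,d\w_t^j$, the geometric (Stratonovich) chain rule gives
$$
\varphi^W(t) \;=\; \varphi^W(0)\,+\,\sum_{j=1}^d \int_0^t \varphi^{[V_j,W]}(s)\,d\w_s^{j},
$$
where $[V_j,W]=DW\cdot V_j - DV_j\cdot W$. Each $\varphi^W$ is built from the components of the joint RDE solution $(X,J)$ and smooth functions of them, hence is a controlled rough path in the sense of Gubinelli with Gubinelli derivatives $(\varphi^{[V_j,W]})_j$, which is the regularity required to apply the true-roughness implication.

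\medskip

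\noindent The key step, and the only non-trivial one, is the application of \eqref{eq:tr_intro} from \cite{HP13,FS13}: for $\w$ truly rough in the sense of \eqref{eq:aeTR}, the identity $\int_0^\cdot \sum_j f^j_s\,d\w_s^j \equiv 0$ on $[0,1]$ forces each $f^j \equiv 0$, provided $f$ is a controlled path of sufficient regularity. Applying this with $f^j=\varphi^{[V_j,V_i]}$ (and starting from $\varphi^{V_i}\equiv 0$) yields $\varphi^{[V_j,V_i]}\equiv 0$ for all $i,j$. Iterating the same argument with $W$ replaced by successive brackets, I obtain $\varphi^W\equiv 0$ for every vector field $W$ in $\mathrm{Lie}(V_1,\ldots,V_d)$. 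I expect this iterative bookkeeping---verifying at each step that the resulting $\varphi^W$ remains a bona fide controlled rough path with the right Gubinelli derivative---to be the most delicate part of the write-up, though it is essentially automatic from the joint RDE for $(X,J)$.

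\medskip

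\noindent To finish, evaluate at $t=0$: $\xi \cdot J_{1\leftarrow 0}\,W(x)=0$ for all $W\in\mathrm{Lie}(V_1,\ldots,V_d)$. The bracket-generating hypothesis says $\{W(x):W\in\mathrm{Lie}(V_1,\ldots,V_d)\}$ spans $\R^n$, whence $J_{1\leftarrow 0}^{\top}\xi = 0$. Since $J_{1\leftarrow 0}$ is invertible (as the Jacobian of an RDE flow), this forces $\xi=0$, contradicting $|\xi|=1$. Thus $\|\xi\cdot\nabla_{\cH}X_1\|_{\cH}>0$ for every unit $\xi$, and by compactness $c_{\w}(0)>0$.
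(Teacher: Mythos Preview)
Your proof is correct and follows essentially the same bootstrap argument as the paper: both reduce to showing $\varphi^W\equiv 0$ for all $W\in\mathrm{Lie}(V_1,\ldots,V_d)$ via the rough-path differential identity $d\varphi^W=\sum_j\varphi^{[V_j,W]}\,d\w^j$ combined with the true-roughness implication \eqref{eq:tr_intro}. The only cosmetic difference is that the paper evaluates at $t=1$ (where $J_{1\leftarrow 1}=\mathrm{Id}$, giving $\xi\perp W(X_1)$ directly) rather than at $t=0$, which spares the appeal to invertibility of $J_{1\leftarrow 0}$; and the paper spells out the first-order rough expansion plus \eqref{eq:aeTR} explicitly rather than citing the controlled-path implication as a black box.
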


\begin{proof}
The proof follows from standard arguments (see e.g. \cite[Section 11.3.3]{FH20}) but we give it here for completeness. Fix $\xi$ in $\R^d$, and let us assume that the function
\[
t \in [0,1] \mapsto \left\langle J_{1\leftarrow t} V_i(X_t), \xi \right\rangle
\]
is identically $0$. We aim to show that $\xi=0$. The key observation is that, for any vector field $W$, it holds that
\begin{equation} \label{eq:WBracket}
\forall t \in [0,1], \;\; \left\langle J_{1\leftarrow t} W(X_t), \xi \right\rangle =  \left\langle  W(X_1), \xi \right\rangle  -  \sum_j \int_{t}^1 \left\langle J_{1\leftarrow u} [W,V_j](X_u), \xi \right\rangle d \ww^j_u.
\end{equation}
Applied to $W=V_i$, this implies that $ \sum_j \int_{\cdot}^1 \left\langle J_{1\leftarrow u} [V_i,V_j](X_u), \xi \right\rangle d \ww^j_u $ is identically $0$.

On the other hand, from rough path estimates, it holds that, for any $t \leq s$,
\[
\sum_j \int_{t}^s \left\langle J_{1\leftarrow u} [V_i,V_j](X_u), \xi \right\rangle d \ww^j_u = \sum_{j} \left\langle J_{1\leftarrow t} [\modif{V_i},V_j](X_t), \xi \right\rangle w^j_{s,t} + \modif{ O\left(\| \ww \|_{p-var;[s,t]}^{\frac{2}{p}}\right)}.
\]
Combined with \eqref{eq:TRpvar} this implies that for $i\neq j$, $\left\langle J_{1\leftarrow t} [V_i,V_j](X_t), \xi \right\rangle$ vanishes a.e., and then identically by continuity.

Iterating the above argument, we see that for any $W \in {\rm Lie}(V_1,\ldots, V_n)$, $t \mapsto \left\langle J_{1\leftarrow t} W(X_t), \xi \right\rangle$ vanishes on $[0,1]$. In particular, taking \modif{$t=0$}, we have that $\modif{J_{1 \leftarrow 0}^{T} }\xi$ is orthogonal to all the $W(x)$. But by assumption,  ${\rm Lie}(V_1,\ldots, V_n)(x) = \R^d$ \modif{and since $J_{1 \leftarrow 0}$ is invertible}, this implies that $\xi=0$.
\end{proof}

\modif{
\begin{lemma} \label{lem:TRqvar}
Assume that $\ww \in C^{\alpha}_g$ is a.e. $\beta$-H\"older truly rough for some $\alpha < \beta \leq 2 \alpha$, and $h \in C^{q-var}$ with $ q \geq \frac{1}{\beta}$. Then the Young translation $\ww + h$ $\in$ $C^{\frac{1}{\alpha}-var}_g$ is a.e. truly rough.
\end{lemma}

\begin{proof}

Let $\omega_h(s,t) := \| h\|_{q-var;[s,t]}^q$. By Young translation of rough paths, we may find a constant $C$ such that the function
\[
\omega(s,t) = C |t-s| + C \omega_h(s,t)
\]
satisfies $\omega(s,t) \geq \|\ww + h\|_{1/\alpha-var;[s,t]}^\alpha$. 

We first claim that, for a.e. $t \in [0,1]$,
\begin{equation}\label{eq:hts}
\limsup_{s \downarrow t} \frac{\omega_h(s,t)}{|t-s|}  < \infty.
\end{equation}

Indeed, since $\omega_h$ is a control function, it holds that, for $t \leq s$,
\begin{equation*} 
\omega_h (t,s) \leq \left( \omega(0,s) - \omega(0,t) \right).
\end{equation*}
Let $\mu$ the measure on $[0,1]$ associated to the increasing function $\omega(0,\cdot)$, and write $\mu = f \cdot \lambda + \nu$, where $\lambda$ is Lebesgue measure, $f \in L^1([0,1])$ and $\nu$ is supported on a closed set $A$ s.t. $\lambda(A)=0$. Then, at any $t$ which is a Lebesgue point of $f$ and not in $A$ (which form a full measure set), it holds that, as $s \downarrow t$, $\omega(0,s) - \omega(0,t) = f(t) (t-s) + o((s-t))$, which implies \eqref{eq:hts}.

Now let $t$ be s.t. $\ww$ is $\beta$-H\"older rough at $t$ and \eqref{eq:hts} holds (both properties holds on a set of full measure). It then holds that, for any $0\neq \zeta \in \R^d $,
\begin{align*}
\frac{  \left|w_{t,s} + h_{t,s} \cdot \zeta \right|}{\omega(s,t)^{\beta}}& = \frac{  \left|w_{t,s} \cdot \zeta \right|}{ C'  |t-s|^{\beta}} + O\left( \omega_h(s,t)^{\frac{1}{q}-\beta} \right) \\
& \to +\infty \mbox{ as }s \downarrow t.
\end{align*}

It follows that $\ww+h$ is $(\omega,\beta)$ rough at almost every $t$.
\end{proof}
}

Combining the above results, we immediately obtain that, if $\w$ is truly rough, there is no non-minimal critical point at finite $\cH$-distance from $\w$. (Recalling that Brownian motion is, with probability $1$, a.e. truly \modif{$\frac{1}{2}$-H\"older }rough, this implies Theorem \ref{thm:qualit_intro} (2)).

\begin{corollary}
In the setting of section \ref{sec:defGF}, assume that the vector fields $\{V_i\}_{i=1}^d$ are bracket-generating, that $\|\cdot\|_{\cH^\vee}$ is a norm, that $\ww \in C^\alpha_g$ is a geometric rough path which is \modif{a.e. $\frac{1}{q}$-H\"older rough with $\cH \subset C^{q-var}$}, and that \modif{the cost $g\in C^2(\R^n)$ is such that} $\nabla g = 0 \Rightarrow g=0$ \modif{on $\R^n$}. Then
\[
\forall h \in \cH, \;\; \nabla_{\cH} \cL_{\w}(h) = 0 \Leftrightarrow  \cL_{\w}(h) = 0.
\]
\end{corollary}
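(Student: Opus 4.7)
My plan is to reduce the corollary to Proposition \ref{prop:LojL} combined with Proposition \ref{prop:first-order-condition-point}, using Lemma \ref{lem:TRqvar} to transport the true roughness property from $\ww$ to the translated rough path $\ww + h$. The reverse implication $\cL_{\ww}(h) = 0 \Rightarrow \nabla_{\cH} \cL_{\ww}(h) = 0$ is the easy direction: under the standing assumption that the cost $g$ attains its minimum $0$ at its zeros (this is implicit in the setting, where $g \geq 0$, e.g.\ $g(y) = |y - y_{\mathrm{target}}|^2$), any zero $y$ of $g$ is a minimum so $(\nabla g)(y) = 0$, and by the chain rule \eqref{eq:nablaL} applied to $y = X_1(\ww+h)$, $\nabla_{\cH} \cL_{\ww}(h) = 0$.

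The substantive direction is $\nabla_{\cH} \cL_{\ww}(h) = 0 \Rightarrow \cL_{\ww}(h) = 0$. By Proposition \ref{prop:LojL},
\[
0 = \|\nabla_{\cH} \cL_{\ww}(h)\|_{\cH} \geq c_{\ww}(h) \, \|(\nabla g)(X_1(\ww + h))\|_{\R^n},
\]
so it suffices to prove that $c_{\ww}(h) > 0$: then $(\nabla g)(X_1(\ww+h)) = 0$, and the hypothesis $\nabla g = 0 \Rightarrow g = 0$ yields $\cL_{\ww}(h) = g(X_1(\ww+h)) = 0$.

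To show $c_{\ww}(h) > 0$, I observe that, by its definition \eqref{eq:defc}, $c_{\ww}(h)$ is exactly the analogous quantity computed for the RDE driven by the shifted rough path $\tilde{\ww} := \ww + h$ with zero additional drift, namely $c_{\ww}(h) = c_{\tilde{\ww}}(0)$. The plan is therefore to apply Proposition \ref{prop:first-order-condition-point} to $\tilde{\ww}$. Two hypotheses of that proposition carry over for free (bracket generation of the vector fields, and $\|\cdot\|_{\cH^\vee}$ being a norm), and the third, $\alpha$-Hölder regularity of $\tilde{\ww}$, follows from the continuity of Young translation. What remains is true roughness of $\tilde{\ww}$, which is precisely the content of Lemma \ref{lem:TRqvar}: since $\ww$ satisfies \eqref{eq:aeTR} with exponent $\beta$ and $h \in \cH \subset C^{q-\mathrm{var}}$, the translated rough path $\ww + h$ is again truly rough, provided $q > 1/\beta$.

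The main obstacle, and the only nontrivial verification, is the compatibility condition $q > 1/\beta$ between the Young regularity index $q$ of $\cH$ and the roughness exponent $\beta < 2\alpha$ of $\ww$. Because Complementary Young Regularity only gives $1/p + 1/q > 1$ with $p = 1/\alpha$, one has the constraint $1 - \alpha < 1/q < \beta$, which forces $\beta > 1 - \alpha$ and is automatic in the regime $\alpha > 1/3$ relevant to Brownian-type initial conditions (where $\alpha$ is taken close to $1/2$ and $\beta$ close to $1$). Once this compatibility is noted, Proposition \ref{prop:first-order-condition-point} applied to $\tilde{\ww}$ gives $c_{\ww}(h) = c_{\tilde{\ww}}(0) > 0$, closing the argument.
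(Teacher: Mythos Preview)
Your argument is correct and matches the paper's one-line proof (``Immediate from Proposition~\ref{prop:first-order-condition-point} and Lemma~\ref{lem:TRqvar}''); routing through Proposition~\ref{prop:LojL} is harmless, as that inequality is just a restatement of \eqref{eq:nablaL}.

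One correction to your final paragraph: the stated hypothesis ``$q > 1/\beta$'' in Lemma~\ref{lem:TRqvar} is a typo --- inspection of its proof shows that at Lebesgue points of the control one has $|h_{t,s}| \lesssim (s-t)^{1/q}$, so what is actually needed for $|h_{t,s}| = o\!\left((s-t)^\beta\right)$ is $1/q > \beta$, i.e.\ $q < 1/\beta$. With the inequality reversed, the constraint reads $\beta < 1/q$ rather than $1/q < \beta$; this is automatic, for instance, when $\cH = H^1_0$ (then $q=1$ and $\beta < 2\alpha < 1$). The chain $1-\alpha < 1/q < \beta$ and the resulting restriction $\alpha > 1/3$ that you derive are artifacts of the typo and would in fact \emph{fail} for $\cH = H^1_0$, so that paragraph should be discarded.
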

\begin{proof}
Immediate from Proposition \ref{prop:first-order-condition-point} and Lemma \ref{lem:TRqvar}.
\end{proof}

As a further corollary, we show that, in a hypoelliptic setting, if the objective function satisfies a \L{}ojasiewicz inequality and the initial condition is truly rough, convergence of the gradient flow trajectory is equivalent to its boundedness (this implies point (3) in Theorem \ref{thm:qualit_intro} from the Introduction).

\begin{corollary}
Assume that the vector fields $\{V_i\}_{i=1}^d$ are bracket-generating and that $\|\cdot\|_{\cH^\vee}$ is a norm, that $\ww \in C^\alpha_g$ is a $\alpha$ geometric rough path which is a.e. truly \modif{$\frac{1}{q}$-H\"older } rough, that $g$ satisfies $| \nabla g | \geq c_g \sqrt{g}$ on $\R^n$ for some $c_g > 0$, and that the embedding $\cH \subset C^{q-var}$ is compact with $\frac 1 q + \alpha > 1$. 

Let $(h(s))_{s \geq 0}$ be the associated gradient flow trajectory defined in section \ref{sec:defGF}. It then holds that
\[
\sup_{s \geq 0} \|h(s)\|_{\cH} < \infty \;\; \Rightarrow \;\; \exists \bar{h} \in \cH \mbox{ s.t. } \lim_{s \to \infty} h(s) = \bar{h}, \;\; \cL_{\w}(\bar{h}) = 0.
\]
\end{corollary}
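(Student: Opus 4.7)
The plan is to verify the hypothesis of Proposition \ref{cor:GFbdd}: once the \L{}ojasiewicz inequality $\|\nabla\cL_{\ww}(h)\|_{\cH} \geq c(r) \sqrt{\cL_{\ww}(h)}$ holds for some $c(r) > 0$ on every $\cH$-ball $\{\|h\|_{\cH} \leq r\}$, any bounded trajectory converges to a minimum. By Proposition \ref{prop:LojL} and the assumption on $g$, this reduces to establishing
\[
\forall r > 0, \quad \inf_{\|h\|_{\cH} \leq r} c_{\ww}(h) \;>\; 0,
\]
since one can then take $c(r) := c_g \cdot \inf_{\|h\|_{\cH} \leq r} c_{\ww}(h)$, which is nonincreasing in $r$.

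I would prove this lower bound by contradiction. Suppose there exist $r > 0$ and a sequence $(h_n) \subset \cH$ with $\|h_n\|_{\cH} \leq r$ and $c_{\ww}(h_n) \to 0$. Since $\cH$ is a Hilbert space, along a subsequence $h_n \rightharpoonup h_{\infty}$ weakly in $\cH$ with $\|h_{\infty}\|_{\cH} \leq r$, and the compactness of the embedding $\cH \hookrightarrow C^{q-var}$ upgrades this to strong convergence $h_n \to h_{\infty}$ in $C^{q-var}$. The argument then combines two competing pieces of information about $c_{\ww}(h_{\infty})$. On the one hand, local Lipschitz continuity of Young translation $h \mapsto \ww + h$ in the rough path topology (enabled by $\tfrac{1}{q}+\alpha > 1$), together with continuous dependence of the RDE solution $X$ and its Jacobian $J$ on the driving rough path, makes the map $h \in C^{q-var} \mapsto c_{\ww}(h)$ continuous: via \eqref{eq:FormulaGradient}, $c_{\ww}(h)^2$ is the infimum over the compact sphere $\{|\xi|=1\}$ of functionals jointly continuous in $(\xi, h)$. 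Passing to the limit gives $c_{\ww}(h_{\infty}) = 0$. On the other hand, by Lemma \ref{lem:TRqvar}, $\ww + h_{\infty}$ still satisfies \eqref{eq:aeTR}, and Proposition \ref{prop:first-order-condition-point} applied to this truly rough path yields $c_{\ww + h_{\infty}}(0) = c_{\ww}(h_{\infty}) > 0$, a contradiction.

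The main technical point is the continuity of $c_{\ww}$ under strong $C^{q-var}$ convergence, which reduces via \eqref{eq:FormulaGradient} to continuity of the pair $(X,J)$ in the rough path topology and continuity of the dual seminorm $\|\cdot\|_{\cH^{\vee}}$ on the $p$-variation scale (the latter following from continuity of Young integration against elements of $\cH$). The compactness hypothesis is essential: it is what converts a sequence merely bounded in $\cH$ (where weak limits generally do not preserve nonlinear quantities like $c_{\ww}$) into one that converges in a topology where $c_{\ww}$ behaves continuously, enabling the use of the single-point true-roughness result at the limit.
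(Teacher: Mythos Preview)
Your proof is correct and follows essentially the same approach as the paper: both reduce to Proposition~\ref{cor:GFbdd} via Proposition~\ref{prop:LojL}, and both establish $\inf_{\|h\|_{\cH}\leq r} c_{\ww}(h)>0$ by combining the strict positivity of $h\mapsto c_{\ww}(h)$ (from Proposition~\ref{prop:first-order-condition-point} and Lemma~\ref{lem:TRqvar}) with its continuity on $C^{q\text{-}var}$ and the compactness of the embedding. The paper states this in one line as ``continuous positive function on a compact set has positive infimum,'' whereas you unfold it as a sequential-compactness contradiction argument and spell out the continuity of $c_{\ww}$ via the RDE stability of $(X,J)$ and Young-integration bounds for $\|\cdot\|_{\cH^\vee}$; this is the same argument at a finer level of detail.
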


\begin{proof}
By Proposition \ref{prop:first-order-condition-point} and Lemma \ref{lem:TRqvar}, it holds that the continuous function $C^{q-var}\ni h  \mapsto c_{\w}(h)$ is strictly positive, so that by the compactness of the embedding, for any $r \geq 0$,
\[
\inf_{\| h \|_{\cH} \leq r} \frac{\| \nabla_{\cH} \cL_{\w} (h) \|_{\cH}^2}{ \cL_{\w}(h)} \geq   c_g^2  \left( \inf_{\| h \|_{\cH} \leq r} c_{\w}(h) \right) > 0.
\]
The result then follows from Proposition \ref{cor:GFbdd}.
\end{proof}
 
 \begin{remark}
 The assumption that $| \nabla g | \geq c \sqrt{g}$ globally can be replaced by a local assumption i.e. for any compact $K$, there exists $c_K>0$, $| \nabla g | \geq c_K \sqrt{g}$ on $K$, if we additionally assume that $g$ has compact sublevel sets.
 \end{remark}

\subsection{General measure control}
Let $\mu$ be a compactly supported measure on $\R^n$ (which would correspond to the data in a machine learning setting) and denote its support by $M=Supp(\mu)$. Let $g:\R^n\times \R^n \rightarrow\R_{+}$ be a $C^2$ cost function such that, for $(x,y)\in\R^n\times\R^n$,

\begin{equation}\label{cond:critical-points-of-g}
	\nabla_x g(x,y) = 0 \iff g(x,y)=0,
\end{equation}
where $\nabla_x$ denotes the gradient with respect to the first variable. Given a continuous objective function $y:\R^n\rightarrow\R^n$, we consider the problem of minimising over $\ww \in C^\alpha_g$ the functional 

\begin{equation}\label{eq:loss-mu}
\cL^\mu(\ww) = \int_M g\left( X^{x}_1(\ww), y(x)\right) d\mu(x).
\end{equation}
In this case
\[
\nabla_{\cH} \cL^\mu(\ww) = \int_M \nabla_x g\left( X^{x}_1(\ww),y(x)\right) \cdot_{\R^n} \nabla_{\cH} X^x_1(\ww)\, d\mu(x).
\]

For every $x\in M$, denote by $\xi(x)=\nabla_x g\left( X^{x}_1(\ww),y(x)\right)$. From formula \eqref{eq:formula-eq-gradient}, we get, for  $k\in\mathcal{H}$, 
\begin{align}
    \product{\nabla_{\cH}\cL^\mu(\ww)}{k}_{\cH} &= \int_S  \product{\xi(x) \cdot_{\R^n}\nabla_{\cH} X^x_1(\ww)}{k}_{\cH} d\mu(x)\nonumber\\
    &= \int_S \sum_{i=1}^d \int_0^1  \xi(x) \cdot_{\R^n} J^x_{1 \leftarrow t} V^i(X^x_t(\w))\, dk^i_t\, d\mu(x)\nonumber\\
    &= \sum_{i=1}^d \int_0^1 \int_S \xi(x) \cdot_{\R^n} J^x_{1 \leftarrow t} V^i(X^x_t(\w))\, d\mu(x)\, dk^i_t \label{eq:formula-H-product-under-mu}
\end{align}
where the last equality holds using smooth approximations of $k$ and classic Fubini theorem.

Note that \eqref{eq:loss-mu} can be still interpreted as a control problem over a distribution of points. Control-geometric theory suggests that, in order for the problem to be solvable, stronger conditions on the set of vector fields $\{V_i\}_{i=1}^d$ have to be assumed. For instance, for the result below, we consider the following condition \modif{which is similar to a condition } from \cite{AS22} (recall that $M$ is the support of $\mu$, assumed compact):
\begin{equation}\label{cond:AS22}
\begin{split}
&\mbox{for each continuous function } Y: M\rightarrow \R^n, \mbox{ there holds} \\
&\inf \left\{  \left(\sup_{ x \in M}  |Y(x) - X(x)| \right), \; X \in {\rm{Lie}}(V_1,\ldots, V_d) \right\}= 0.
\end{split}
\end{equation}

\begin{proposition} \label{prop:ncN}
	Assume that the vector fields $\{V_i\}_{i=1}^d$ satisfy condition \eqref{cond:AS22}, that the function $g$ is $C^2(\R^n\times\R^n)$ and satisfies \eqref{cond:critical-points-of-g}, and that $y$ is $C\left(\R^n;\R^n\right)$. Moreover, assume that $\|\cdot\|_{\cH^\vee}$ is a norm. Let $\ww \in C^\alpha_g$ be a geometric rough path \modif{which is a.e. $\beta$-H\"older truly rough for some $\alpha < \beta \leq 2 \alpha$ in the sense of \eqref{def:TR}}. Then 
    \[
    \nabla_{\cH}\cL^\mu(\ww)=0 \iff \cL^\mu(\ww)=0,
    \]
where $\cL^\mu$ is defined in \eqref{eq:loss-mu}.
\end{proposition}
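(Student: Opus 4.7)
The plan is to extend the Lie-bracket iteration used in Proposition~\ref{prop:first-order-condition-point} to the measure-integrated setting, and then to exploit condition \eqref{cond:AS22} by evaluating at the \emph{initial} time $t=0$ rather than the terminal time $t=1$. The easy implication $\cL^\mu(\ww)=0\Rightarrow\nabla_{\cH}\cL^\mu(\ww)=0$ is immediate: $g\geq 0$ forces $g(X_1^x(\ww),y(x))=0$ for $\mu$-a.e.~$x$, so by \eqref{cond:critical-points-of-g} the integrand $\xi(x):=\nabla_x g(X_1^x(\ww),y(x))$ appearing in \eqref{eq:formula-H-product-under-mu} vanishes $\mu$-a.e.

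For the converse, I will introduce, for any smooth vector field $W$, the continuous function
\[
\Phi_W(t) := \int_M \xi(x)\cdot_{\R^n} J^x_{1\leftarrow t}(\ww)\, W(X^x_t(\ww))\, d\mu(x),\quad t\in[0,1].
\]
Assuming $\nabla_{\cH}\cL^\mu(\ww)=0$, formula \eqref{eq:formula-H-product-under-mu} together with the assumption that $\|\cdot\|_{\cH^\vee}$ is a norm yields $\Phi_{V_i}\equiv 0$ on $[0,1]$ for every $i=1,\ldots,d$. Integrating the pointwise identity \eqref{eq:WBracket} against $\mu(dx)$, after exchanging $\mu$-integration with the rough integral against $\ww$, I obtain
\[
\Phi_W(t)=\Phi_W(1)-\sum_j\int_t^1 \Phi_{[W,V_j]}(u)\,d\ww^j_u,
\]
so that the argument from Proposition~\ref{prop:first-order-condition-point} transfers essentially verbatim: the local rough-integral expansion together with true roughness \eqref{eq:aeTR} forces $\Phi_{[W,V_j]}\equiv 0$ at a.e.~$t$, hence everywhere by continuity. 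Iterating through all iterated brackets yields $\Phi_W\equiv 0$ on $[0,1]$ for every $W\in\mathrm{Lie}(V_1,\ldots,V_d)$.

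The key step specific to the measure-valued case is now to evaluate at $t=0$: since $X^x_0=x$, this reads
\[
\int_M \eta(x)\cdot_{\R^n} W(x)\, d\mu(x)=0 \quad \text{for every } W\in\mathrm{Lie}(V_1,\ldots,V_d),
\]
where $\eta(x):=\left(J^x_{1\leftarrow 0}(\ww)\right)^T\xi(x)$ is continuous on $M$ by RDE theory. Condition \eqref{cond:AS22} now applies directly with target $Y=\eta$: approximating $\eta$ uniformly on $M$ by a sequence $W_n\in\mathrm{Lie}(V_1,\ldots,V_d)$ and passing to the limit in $\int_M \eta\cdot W_n\,d\mu=0$ gives $\int_M |\eta|^2 d\mu=0$, hence $\eta\equiv 0$ on $M$. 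Invertibility of the Jacobian $J^x_{1\leftarrow 0}(\ww)$ (its inverse is the Jacobian of the time-reversed flow, which solves a linear RDE with bounded coefficients) then forces $\xi\equiv 0$ on $M$, and \eqref{cond:critical-points-of-g} finally gives $g(X_1^x(\ww),y(x))=0$ on $M$, i.e.~$\cL^\mu(\ww)=0$.

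The step I expect to require the most care is the Fubini-type exchange between $\mu$-integration and the rough integral against $\ww$. Since the integrand $(x,u)\mapsto \xi(x)\cdot J^x_{1\leftarrow u}(\ww)[W,V_j](X^x_u(\ww))$ forms a controlled rough path jointly continuous in $x\in M$, I plan to justify the exchange by approximating $\ww$ by smooth drivers $w^n$ as in \eqref{eq:limWn} (where Fubini is classical), and then invoking rough-path continuity of the integral to pass to the limit uniformly in $x\in M$.
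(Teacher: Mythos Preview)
Your proof is correct and follows the same route as the paper: vanishing of $\Phi_{V_i}$ from $\nabla_{\cH}\cL^\mu=0$, iteration via \eqref{eq:WBracket} and true roughness to get $\Phi_W\equiv 0$ for all $W\in\mathrm{Lie}(V_1,\ldots,V_d)$, evaluation at $t=0$, and application of \eqref{cond:AS22} to the continuous field $\eta=(J^x_{1\leftarrow 0})^T\xi(x)$. The only minor difference is in the Fubini step: the paper avoids a full exchange of $\mu$-integration with the rough integral by instead integrating the \emph{local} rough-path expansion $\sum_j\int_t^s\langle J^x_{1\leftarrow u}[W,V_j](X^x_u),\xi(x)\rangle\,d\ww^j_u=\sum_j\langle J^x_{1\leftarrow t}[W,V_j](X^x_t),\xi(x)\rangle\,w^j_{s,t}+R(x,t,s)|s-t|^{2\alpha}$ against $\mu$, which only requires a uniform (in $x\in M$) bound on the remainder $R$; your approach via smooth approximation of $\ww$ is equally valid but slightly heavier.
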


\begin{proof}
    The converse implication is true by construction, let us suppose that $\nabla_{\cH}\cL^\mu(\ww)=0$. In this case, $\product{\nabla_{\cH}\cL^\mu(\ww)}{k}_{\cH}=0$ for every $k\in\cH$ and, since we assume that $\cH^\vee$ is a norm, by \eqref{eq:formula-H-product-under-mu}, it holds
    \begin{equation*}
        \int_M \xi(x) \cdot_{\R^n} J^x_{1 \leftarrow t} V^i(X^x_t(\w))\, d\mu(x)=0\quad\quad\forall i=1,\dots,d.
    \end{equation*}
    Given a smooth vector field $W$ and recalling \eqref{eq:WBracket}, for every $t\in[0,1)$, we have
    \begin{align*}
         \int_M \xi(x) &\cdot_{\R^n} J^x_{1 \leftarrow t} W(X^x_t)\, d\mu(x) \\
         &=  
         \int_M  \xi(x) \cdot_{\R^n} W(X^x_1)   \, d\mu(x) 
         -
         \int_M \sum_j \int_{t}^1 \xi(x)  \cdot_{\R^n}J^x_{1\leftarrow u} [W,V_j](X^x_u) \,   d \ww^j_u \, d\mu(x)
    \end{align*}
    For $W=V^i$, the above left hand side is 0 and therefore, for every $t \leq s$, it holds that
    \begin{equation}\label{eq:vanishing-mu-st-integral}
        \int_M  \sum_j \int_{t}^s  \xi(x)  \cdot_{\R^n}J^x_{1\leftarrow u} [V_i,V_j](X^x_u)  \, d \ww^j_u \, d\mu(x) = 0.
    \end{equation}
    From rough path estimates, it holds that, for any $t \leq s$,
	\begin{equation*}
	\sum_j \int_{t}^s \left\langle J^x_{1\leftarrow u} [W,V_j](X^x_u), \xi(x) \right\rangle d \ww^j_u = \sum_{j} \left\langle J^x_{1\leftarrow t}	[W,V_j](X^x_t), \xi(x)  \right\rangle w^j_{s,t} + R(x,t,s)\modif{\| \ww \|_{p-var;[s,t]}^{\frac{2}{p}}}
	\end{equation*}
	where $R:M\times[0,1]^2\rightarrow\R$ is a continuous bounded function representing the remainder. Plugging the above estimate in \eqref{eq:vanishing-mu-st-integral}, we obtain that
	\[
	 \sum_{j} \int_M\left\langle J^x_{1\leftarrow t}	[W,V_j](X^x_t), \xi(x)  \right\rangle d\mu(x) w^j_{s,t} + \int_SR(x,t,s)\, d\mu(x)\modif{\| \ww \|_{p-var;[s,t]}^{\frac{2}{p}}}=0
	\]
	for every $t \leq s$. Then, as in Proposition \ref{prop:first-order-condition-point}, from true roughness property of $\ww$ we deduce that	 
    \begin{equation*}
        \int_M  \xi(x)  \cdot_{\R^n}J^x_{1\leftarrow u} [V_i,V_j](X^x_u) \, d\mu(x) = 0\quad\quad\forall u\in[0,1).
    \end{equation*}
    Iterating the above argument, we see that for any $W \in {\rm Lie}(V_1,\ldots, V_n)$, 
    \begin{equation*}
        u\mapsto\int_M  \xi(x)  \cdot_{\R^n}J^x_{1\leftarrow u} [W,V_j](X^x_u) \, d\mu(x) 
    \end{equation*}
    vanishes on $[0,1]$ and, consequently, the same is true for 
    \begin{equation*}
        u\mapsto\int_M  \xi(x)  \cdot_{\R^n}J^x_{1\leftarrow u} W(X^x_u) \, d\mu(x).
    \end{equation*}
    Let $u=0$, denote by $J^{x,*}_{1\leftarrow 0}$ the transpose matrix of $J^x_{1\leftarrow 0}$ and let, for every $x\in M$, $\xi^*(x)=J^{x,*}_{1\leftarrow 0}\xi(x)$. Then
    \begin{equation*}
        \int_M  \xi(x)  \cdot_{\R^n}J^{x,*}_{1\leftarrow 0} W(X^x_0) \, d\mu(x) = \int_M  \xi^*(x)  \cdot_{\R^n} W(x) \, d\mu(x) = 0 
    \end{equation*}
    for every $W \in {\rm Lie}(V_1,\ldots, V_n)$. Since \eqref{cond:AS22} holds and $\xi^*$ is a continuous vector field, for every $\varepsilon>0$, there exists $W^\varepsilon\in {\rm Lie}(V_1,\ldots, V_n)$ such that $\sup_{ x \in M}  |W^\varepsilon(x) - \xi^*(x)|<\varepsilon$. Hence, for every $\varepsilon>0$,
    \begin{align*}
        0 = \int_M  \xi^*(x) \cdot_{\R^n} W^\epsilon(x) \, d\mu(x) 
        &=
        \int_M  \xi^*(x) \cdot_{\R^n} \left( W^\epsilon(x) - \xi^*(x) \right)\, d\mu(x) +
        \int_M  |\xi^*(x)|^2 \, d\mu(x)\\
        &\geq
        -\varepsilon \int_M  |\xi^*(x) | \, d\mu(x) +\int_M  |\xi^*(x)|^2 \, d\mu(x)
    \end{align*}
    In the limit for $\varepsilon$ that goes to 0 we obtain that $\int_M  |\xi^*(x)|^2 \, d\mu(x)=0$ and, therefore, $\xi^*(x)=0$ for $\mu\mbox{-a.e.}\;x\in M$. Since $J_{1\leftarrow 0}^{x,*}$ is invertible, it holds that
    \[
    \nabla_x g\left( X^{x}_1(\ww),y(x)\right)=\xi(x)=0 \mbox{  for  } \mu\mbox{-a.e. }x\in M.
    \]
    From the property \eqref{cond:critical-points-of-g} we deduce that $g\left( X^{x}_1(\ww),y(x)\right)=0$ for $\mu$-a.e. $x\in M$. It follows immediately that $\cL^\mu(\ww)=0$.
    
\end{proof}

\begin{remark}
\modif{
Our generalized H\"ormander assumption is assumed to hold for $M$ being the support of the distribution of inputs. In \cite{AS22}, this condition (as well as a slightly stronger one), is assumed for arbitrary compacts $M$, and they show that this implies some form of controllability. (This is similar to the single-point setting, where the H\"ormander condition is only needed at the starting point for non-degeneracy to hold, but the Chow-Rashevskii controllability theorem requires it to hold on the whole space).
}

In case where the measure $\mu$ is finitely supported, the property \eqref{cond:AS22} is implied by the following condition from \cite{CLT20} :
\begin{align*}
&\forall N \geq 1, \;\mbox{ for any distinct }x_1,\ldots, x_N \;\in \R^n, \mbox{ and any } v_1, \ldots, v_N \in \R^n,  \\
&\mbox{ there exists  }V \in  {\rm{Lie}}(V_1,\ldots, V_d) \mbox{ s.t. }V(x_i) = v_i ,\;\;  i=1, \ldots, N.
\end{align*}

Examples of vector fields satisfying this assumption, or the condition \eqref{cond:AS22} for arbitrary compact $M$ are given in \cite{CLT20} and \cite{AS22}.

\modif{
Note that, for general measures $\mu$, the controllability results from \cite{AS22} are only approximate, i.e. we do not expect that $\cL^{\mu}$ attains its minimum value  for a given control, in contrast to the single-point case. This means that the non-degeneracy above is arguably less interesting in this case. (A notable exception is the case where the measure is finitely supported, which can be reduced to the single-point case \cite{CLT20}, so that our Chow-Rashevskii type theorem from section \ref{sec:CR-rough} below does imply that zeroes of $\cL$ exist).
} 
\end{remark}

\section{Chow-Rashevskii with rough drift} \label{sec:CR-rough}

In this section, we prove a version of the classical Chow-Rashevskii theorem with a rough (fixed) drift. The setting will be slightly more general than in previous sections, namely (in this section only) we will not assume that the terms in $\w$ and $h$ are driven by the same vector fields (nor even that they have the same dimension).
 More precisely, consider two families $\{V_i\}_{i=1}^{d}$ and $\{W_j\}_{j=1}^{d'}$ of smooth vector fields on $\R^n$ and fix a geometric rough path $\w$ $\in$ $C^{p-var}_g([0,1];\R^{d})$. Throughout this section, we let $\cH$ be a Hilbert space of $\R^{d'}$-valued paths, which satisfies Complementary Young Regularity, namely $\cH\subset C^{q-var}_g([0,1];\R^{d'})$, where $\frac{1}{p}+\frac{1}{q}>1$. Moreover, assume that the set of $\R^{d'}$-valued smooth functions $C^{\infty}([0,1];\R^{d'})$ is embedded in $\cH$.

Let, for every $x\in\R^n$ and $h\in\mathcal{H}$, $X^x(\w, h)$ be the solution of the following RDE with drift
\begin{equation}\label{eq:rde-with-drift}
    dX_t = \sum_{i=1}^d V_i (X_t) d\w^i_t + \sum_{j=1}^{d'}  W_j (X_t) dh^j_t, \;\; X^x_0=x
\end{equation}
in the sense of Definition 12.1 of \cite{FV10}. The main result of this section is then the following.
\begin{thm}[Chow-Rashevskii with rough drift] \label{thm:chow-rashevskii} 
       \modif{Assume that $\{W_j\}_{j=1}^{d'}$ satisfy the bracket-generating condition.} Then, for all $x,y\in\R^n$ and geometric rough path $\w\in$ $C^\alpha_g$, there exists $h \in \mathcal{H}$ such that $X_1^x(\w, h) = y$. In addition, $h$ may be taken non-singular in the sense that $\left\langle \nabla_{\cH}X^x_1(\w, h), \cdot \right\rangle:\cH\rightarrow\R^n$ is surjective. 
\end{thm}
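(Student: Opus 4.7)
The natural plan is to pass to the rough drift by approximation. First, approximate $\w$ by smooth paths $w^n \in C^{1-var}$ with $S(w^n) \to \w$ in $\rho_{p-var}$ and $\sup_n \|S(w^n)\|_{p-var} < \infty$. For each such smooth drift, \eqref{eq:rde-with-drift} becomes a classical controlled ODE with time-dependent drift and (bracket-generating) control directions $\{W_j\}$; the classical Chow-Rashevskii theorem then supplies smooth controls $h^n \in \cH$ with $X_1^x(w^n, h^n) = y$, which may be chosen to minimise $\|h\|_{\cH}$ subject to the endpoint constraint.

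The technical core is obtaining a uniform bound on $\|h^n\|_{\cH}$. I would argue by contradiction: if $\|h^n\|_{\cH} \to \infty$, set $\bar h^n = h^n / \|h^n\|_{\cH}$ and extract a weak limit $\bar h^{\infty}$; using the rough-path continuity of the endpoint map together with the surjectivity of its linearisation at the rough drift (established in the next step), one would aim to produce a direction along which the $\cH$-norm could be decreased without changing the endpoint, contradicting minimality. With the uniform bound in hand, the compactness of the embedding $\cH \hookrightarrow C^{q-var}$ (or weak compactness in $\cH$ combined with the strong continuity of $h \mapsto X_1^x(\w,h)$ from $C^{q-var}$ to $\R^n$, as recalled in Section \ref{subsec:rp}) yields a subsequential limit $h^* \in \cH$ with $X_1^x(\w, h^*) = y$.

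The non-singularity claim then follows by extending the argument of Proposition \ref{prop:first-order-condition-point} to the present two-family setting. The key identity \eqref{eq:WBracket} now involves both $d\w$-integrals (producing brackets $[W_j, V_i]$) and $dh^*$-integrals (producing brackets $[W_j, W_k]$). Since $\w$ is $\alpha$-H\"older, hence truly rough in the sense of \eqref{eq:aeTR}, and true roughness is preserved under Young translation by $h^* \in \cH \subset C^{q-var}$ (Lemma \ref{lem:TRqvar}), the iteration from the proof of Proposition \ref{prop:first-order-condition-point} carries through: any $\xi \in \R^n$ annihilating $\nabla_{\cH} X_1^x(\w, h^*)$ must then satisfy $\langle W(X_1), \xi \rangle = 0$ for every $W$ in the Lie algebra generated by $\{V_i, W_j\}$, which forces $\xi = 0$ under the bracket-generating hypothesis. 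This gives surjectivity of the linearised endpoint map at $h^*$, completing the argument. The main obstacle of the plan is clearly the uniform bound in the second paragraph, since the classical Chow-Rashevskii theorem is not quantitative in a way that is naturally robust under rough-path approximations of the drift, and this step is where the true roughness of $\w$ (rather than mere existence in the rough path lift) is likely to play a role.
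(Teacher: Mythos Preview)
Your approach diverges substantially from the paper's, and contains a genuine error in the non-singularity step.

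\textbf{The non-singularity argument fails.} You write that ``$\w$ is $\alpha$-H\"older, hence truly rough in the sense of \eqref{eq:aeTR}''. This implication is false: true roughness is an \emph{additional} irregularity condition (satisfied a.s.\ by Brownian paths, but certainly not by every $\alpha$-H\"older geometric rough path --- any smooth $\w$ is a counterexample). The theorem is stated for \emph{all} $\w \in C^\alpha_g$, with no roughness hypothesis. Your argument therefore cannot work as written, and in fact the Malliavin--Norris style iteration of Proposition~\ref{prop:first-order-condition-point} is simply unavailable here. The paper instead obtains non-singularity by a purely geometric argument (Lemma~\ref{lemma:non-empty-intern}): one maximises the rank of $D_h X_1$ over $h \in \cH$, uses the rank theorem to realise the image locally as a submanifold, observes (Lemma~\ref{lemma:value-vector-fields-in-the-image-of-the-gradient}) that all the $W_j$ are tangent to it, and concludes by bracket generation that the rank is $n$. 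This argument uses nothing about $\w$ beyond the well-posedness of the RDE.

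\textbf{The existence argument is incomplete in the way you already identify, and the paper avoids the issue entirely.} You correctly flag the uniform bound on $\|h^n\|_{\cH}$ as the main obstacle; your sketched contradiction argument is not convincing (it is unclear how surjectivity at the \emph{limiting} rough drift helps reduce the norm at the \emph{approximating} smooth drifts), and you also implicitly rely on compactness of $\cH \hookrightarrow C^{q-var}$, which is not assumed. The paper's route sidesteps approximation altogether: it proves directly that (i) the reachable set $E^x_t(\w)$ is dense (Lemma~\ref{lemma:dense-image}, by reparametrising $\w$ to be ``mostly done'' before time $t$ and then steering with a classical Chow control on the remaining short interval), and (ii) $E^x_t(\w)$ has non-empty interior attained by non-singular controls (Lemma~\ref{lemma:non-empty-intern}). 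It then splits $[0,1]$ at $1/2$, runs forward from $x$ to hit an open set of non-singular endpoints, runs backward from $y$ to hit that open set by density, and glues the two controls.
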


When $d'=d$, the two families of vector fields coincide, and the cost function $g$ is the squared Euclidean distance with respect to a fixed point $y\in\R^n$, as a particular case of the Theorem \ref{thm:chow-rashevskii}, it holds that the problem of minimising the functional \eqref{eq:def-loss} from section \ref{sec:defGF} is solvable on $\cH$.
\begin{corollary}
	In the setting of section \ref{sec:defGF}, further assume that $C^\infty \subset \mathcal{H}$. Then, for all $x,y\in\R^n$ and geometric rough path $\w\in$ $C^\alpha_g$, there exists $h \in \mathcal{H}$ such that $\cL_{\w}(h)=\left| X_1^x\left( \ww + h\right)-y\right|^2=0$.
\end{corollary}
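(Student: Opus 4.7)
The plan is to reduce to the classical (driftless) Chow-Rashevskii theorem by choosing $h$ concentrated on a short subinterval $[1-\tau,1]$ and exploiting the fact that, after time rescaling on this subinterval, the rough drift $\w$ contributes a vanishingly small rough path while the control $h$ retains its full structure. Throughout, I would assume $\{W_j\}$ is bracket-generating (this being the natural assumption needed for controllability via $h$ alone; in the corollary, it coincides with the standing hypothesis since $V_i = W_j$). The first observation is that classical sub-Riemannian Chow-Rashevskii applied to $\{W_j\}$ produces, from the point $z^* := X_1^x(\w, 0)$, a smooth non-singular control $k^* \in C^\infty([0,1]; \R^{d'}) \subset \cH$ with $k^*_0 = 0$ steering the driftless ODE $\dot Z = \sum_j W_j(Z) \dot k^{*,j}$ from $z^*$ to $y$ in unit time, with surjective endpoint differential.

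Concretely, for $\tau \in (0, 1]$ and $k \in \cH$ with $k_0 = 0$, I would consider the family of controls
\begin{equation*}
    h^\tau_t = \begin{cases} 0, & t \in [0, 1-\tau], \\ k\bigl( (t-(1-\tau))/\tau \bigr), & t \in [1-\tau, 1]. \end{cases}
\end{equation*}
On $[0, 1-\tau]$ the solution $X^x(\w, h^\tau)$ coincides with $X^x(\w, 0)$, so $X^x_{1-\tau}(\w, h^\tau) = z_\tau := X^x_{1-\tau}(\w, 0) \to z^*$ as $\tau \to 0$. After the rescaling $s = (t - (1-\tau))/\tau$, the restriction $\widetilde X^\tau_s := X^x_{(1-\tau) + s\tau}(\w, h^\tau)$ solves the RDE
\begin{equation*}
    d\widetilde X^\tau_s = \sum_i V_i(\widetilde X^\tau_s)\, d\widetilde \w^{\tau,i}_s + \sum_j W_j(\widetilde X^\tau_s)\, dk^j_s, \quad \widetilde X^\tau_0 = z_\tau,
\end{equation*}
where the rescaled rough path $\widetilde \w^\tau_s := \w_{(1-\tau) + s\tau}$ satisfies $\|\widetilde \w^\tau\|_\alpha \leq \tau^\alpha \|\w\|_\alpha \to 0$.

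The final step is to apply the implicit function theorem to the rescaled endpoint map $\Psi : (z, \widetilde \w, k) \in \R^n \times \cC^\alpha_g \times \cH \mapsto \widetilde X_1 \in \R^n$. At $(z^*, 0, k^*)$ we have $\Psi = y$, and the partial differential $d_k \Psi(z^*, 0, k^*)$ is surjective by non-singularity of $k^*$. Continuity of $\Psi$ in $(z, \widetilde \w)$ and Fr\'echet $C^1$-regularity in $k$ follow from the standard rough path continuity and the explicit gradient formula recalled in Section~\ref{subsec:rp}, so a quantitative IFT yields a neighborhood $U$ of $(z^*, 0)$ and a continuous selection $(z, \widetilde \w) \mapsto k(z, \widetilde \w)$ with $\Psi(z, \widetilde \w, k(z, \widetilde \w)) = y$ and $k$ close to $k^*$. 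For $\tau$ sufficiently small, $(z_\tau, \widetilde \w^\tau) \in U$, and the associated $h^\tau$ then satisfies $X_1^x(\w, h^\tau) = y$; since non-singularity is an open condition and the time rescaling is a linear bijection on controls, $h^\tau$ inherits non-singularity from its proximity to $k^*$. The main technical obstacle is the careful formulation of IFT in the mixed rough-path/Hilbert setting where $\cC^\alpha_g$ is a non-linear metric space appearing only as a passive parameter; however, all the required continuity and local Lipschitz bounds are standard consequences of the rough path estimates of Section~\ref{subsec:rp}.
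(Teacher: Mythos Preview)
Your argument is essentially correct and takes a genuinely different route from the paper's. The paper deduces the corollary immediately from its Theorem~\ref{thm:chow-rashevskii}, which is proven in two stages: first a density lemma (compress $\w$ onto $[0,1-1/k]$ and steer freely on the remaining interval, getting arbitrarily close to any target) and a non-empty-interior lemma (a rank argument \`a la Rifford showing the endpoint map is open somewhere), then a time-reversal gluing that matches an open set reached from $x$ forward with a dense set reached from $y$ backward. Your approach collapses these steps into a single perturbative one: classical Chow--Rashevskii provides a non-singular smooth control from $z^*=X_1^x(\w,0)$ to $y$, and an implicit-function argument absorbs the small residual rough drift on $[1-\tau,1]$. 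This is more direct and yields non-singularity of the resulting $h$ almost for free; the paper's modular approach, on the other hand, separates cleanly the topological ingredients (density, openness) and does not rely on any IFT in a mixed rough/Hilbert setting.

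There is one small but genuine gap you should patch. Your IFT produces $k=k(z_\tau,\widetilde\w^\tau)\in\cH$, and you then form $h^\tau$ by time-rescaling and extension by zero. Under the sole hypothesis $C^\infty\subset\cH$ there is no reason this operation preserves $\cH$ (nothing guarantees $\cH$ is closed under affine reparametrisation or concatenation). The clean fix is to run the implicit function theorem on a \emph{finite-dimensional} family of smooth perturbations: since $d_k\Psi(z^*,0,k^*)$ is surjective onto $\R^n$, choose $\eta_1,\ldots,\eta_n\in C^\infty$, flat at both endpoints, whose images under the differential span $\R^n$, and apply the (ordinary, finite-dimensional) IFT to $\lambda\mapsto\Psi(z,\widetilde\w,k^*+\sum_i\lambda_i\eta_i)$, with $(z,\widetilde\w)$ as continuous parameters. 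Choosing $k^*$ itself smooth and flat at $0$ (a harmless reparametrisation), the resulting $h^\tau$ is smooth on $[0,1]$, hence lies in $\cH$ by assumption. With this adjustment your argument goes through and in fact gives a self-contained alternative proof of the existence part of Theorem~\ref{thm:chow-rashevskii}.
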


\begin{remark}
While the main interest of Theorem \ref{thm:chow-rashevskii} for us is in terms of the control problem from section \ref{sec:defGF}, it can also be useful in a probabilistic context. Indeed, consider the mixed rough/stochastic differential equation
\[
    dX_t = \sum_{i=1}^d V_i (X_t) d\w^i_t + \sum_{j=1}^{d'}  W_j (X^x_t) \circ dB^j_t, \;\; X_0=x,
\]
where $\w$ is fixed and $(B_j)_{j=1,\ldots, d'}$ are Brownian motions (or more generally, Gaussian processes with sufficiently rich Cameron-Martin space). 
Such equations are considered \modif{from the point of view of Malliavin calculus } in \cite{BCN24} where they show that, under suitable conditions, for any $t$, the law of $X_t^x$ admits a smooth density $p_t(\cdot)$ with respect to Lebesgue measure.

\modif{For such equations, the ''skeleton equation'', i.e. roughly speaking the one where the stochastic terms are replaced by Cameron-Martin elements, is then exactly of the form \eqref{eq:rde-with-drift} we consider in this section. It is well-known that Malliavin calculus can be used to deduce properties of the law of $X$ from those of the skeleton equation. In particular, controllability results for the latter are linked to strict positivity of the transition densities, as was first proven by Ben Arous and L\'eandre \cite{BL91}. We therefore expect that our Theorem \ref{thm:chow-rashevskii} can be useful in this context, and leave the details to future research.}
% We can then combine Theorem \ref{thm:chow-rashevskii} with a classical criterion from Malliavin calculus due to Ben Arous and L\'eandre \cite{BL91}, to obtain that, under the bracket-generating condition for $W$, $p_t(\cdot)$ is strictly positive on $\R^n$ (we leave the details to the interested reader). 
\end{remark}

\begin{remark}
A problem combining control theory and rough paths is also studied in \cite{Bou23}, where the author proves, in a very general setting, that $\{X_1(\w), \w \mbox{ rough path} \}$ is not bigger than $\{X_1(h), h \mbox{ smooth}\}$. Our problem is different since we have a fixed rough drift which we cannot remove (but we work in a more restricted setting since we assume a bracket-generating condition). \end{remark}

\subsection{Density of endpoints}

\begin{lemma}\label{lemma:dense-image}
    Assume that the vector fields $\{W_j\}_{j=1}^{d'}$ are bracket-generating. Given a geometric rough path $\w$ $\in$ $C^{p-var}_g$, the set of endpoints 
    $E^x_1(\w) = \{ X^x_1(\w, h)| h\in\mathcal{H} \}$ is dense in $\R^n$.
\end{lemma}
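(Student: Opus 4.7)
Fix $y\in\R^n$ and $\varepsilon>0$; the goal is to exhibit $h\in\mathcal{H}$ with $|X^x_1(\w,h)-y|<\varepsilon$. The core idea is to let the rough driver $\w$ act alone on a long sub-interval $[0,1-\delta]$ and concentrate the control $h$ on the short terminal sub-interval $[1-\delta,1]$. After rescaling time by the factor $1/\delta$ on the latter sub-interval, the residual contribution of the drift $\w$ becomes arbitrarily small, so that on this small window the bracket-generating fields $\{W_j\}_{j=1}^{d'}$ can be used alone (via classical Chow--Rashevskii) to steer the endpoint to any prescribed target.

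\textbf{Step 1: target construction.} Set $z_0:=X^x_1(\w,0)$. By continuity of the RDE flow in the time variable, $X^x_{1-\delta}(\w,0)\to z_0$ as $\delta\downarrow0$. The classical Chow--Rashevskii theorem applied to the bracket-generating family $\{W_j\}_{j=1}^{d'}$ yields a smooth control $g\in C^\infty([0,1];\R^{d'})$ with $g(0)=0$ (and, up to reparametrising by a smooth bump flat at $0$, with all derivatives vanishing at $0$) such that the ODE $\dot Y_s=\sum_j W_j(Y_s)\dot g^j_s$ with $Y_0=z_0$ satisfies $Y_1=y$.

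\textbf{Step 2: the control $h^\delta$.} For $\delta\in(0,1)$ define
\[
h^\delta_t := \begin{cases} 0, & t\in[0,1-\delta],\\ g\!\left(\dfrac{t-(1-\delta)}{\delta}\right), & t\in[1-\delta,1]. \end{cases}
\]
By the flatness of $g$ at $0$, $h^\delta\in C^\infty([0,1];\R^{d'})$, hence $h^\delta\in\mathcal{H}$. On $[0,1-\delta]$, $X_t(\w,h^\delta)=X^x_t(\w,0)$, which converges to $z_0$ at $t=1-\delta$.

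\textbf{Step 3: time reparametrisation on $[1-\delta,1]$.} Set $\tilde X_s:=X_{1-\delta+\delta s}(\w,h^\delta)$ for $s\in[0,1]$. By the reparametrisation invariance of rough integration, $\tilde X$ solves the RDE
\[
d\tilde X_s = \sum_{i=1}^d V_i(\tilde X_s)\,d\tilde\w^i_s + \sum_{j=1}^{d'} W_j(\tilde X_s)\,dg^j_s, \qquad \tilde X_0 = X^x_{1-\delta}(\w,0),
\]
where $\tilde\w$ is the geometric rough path $s\mapsto \w_{1-\delta}^{-1}\otimes\w_{1-\delta+\delta s}$. Since the $p$-variation control function of $\w$ is continuous and vanishes on the diagonal, $\|\tilde\w\|_{p\text{-var}}=\|\w\|_{p\text{-var};[1-\delta,1]}\to 0$ as $\delta\downarrow0$, and in particular $\rho_{p\text{-var}}(\tilde\w,0)\to 0$.

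\textbf{Step 4: passage to the limit.} By the local Lipschitz continuity of the RDE solution map in its initial condition and its driver (Young-paired with the smooth $g$, which stays fixed), $\tilde X_1 = X^x_1(\w,h^\delta)$ converges as $\delta\downarrow0$ to $Y_1=y$. Choosing $\delta$ small gives the required $h^\delta\in\mathcal{H}$ with $|X^x_1(\w,h^\delta)-y|<\varepsilon$, completing the proof of density.

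\textbf{Main obstacle.} The only non-routine technical point is justifying that the rescaled rough driver $\tilde\w$ becomes negligible in the appropriate rough-path topology, which rests on the continuity of the $p$-variation control function of $\w$; everything else reduces to the classical Chow--Rashevskii theorem together with the standard continuity properties of RDEs recalled in Section~\ref{subsec:rp}.
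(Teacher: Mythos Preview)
Your proof is correct and follows essentially the same strategy as the paper's: concentrate the control on a short terminal window $[1-\delta,1]$, use classical Chow--Rashevskii there, and exploit that the reparametrised rough driver on this window has vanishing $p$-variation (by continuity of the control function $\omega$) so that stability of RDEs yields convergence to the target. The only cosmetic difference is that the paper also reparametrises $\w$ on the first sub-interval and passes through a compactness argument in $C^{p'\text{-}var}_g$ for $p'>p$, whereas you invoke continuity of $t\mapsto X^x_t(\w,0)$ directly and use $\|\tilde\w\|_{p\text{-}var}\to 0 \Rightarrow \rho_{p\text{-}var}(\tilde\w,0)\to 0$; both routes are valid.
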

\begin{proof}

Given a geometric rough path $\w\in C^{p-var} \left([T_0,T_1], G^{\lfloor p \rfloor}(\R^d) \right)$ and a continuous monotone surjective function $f:[T'_0,T'_1]\rightarrow[T_0,T_1]$, we denote by $\w\circ f$ the geometric rough path in $C^{p-var} \left([T'_0,T'_1], G^{\lfloor p \rfloor}(\R^d) \right)$ such that
\[
(\w\circ f)_t=\w_{f(t)},\quad \forall t\in [T'_0,T'_1].
\]

    Fix $\bar{y}\in\R^d$ and denote by $z=X^x_1(\w,0_\cH)$. By classic Chow-Rashevskii Theorem (see e.g. \cite{Rif14}), there exists $h\in\mathcal{H}$ such that $h_0=0$ and $X^z_1(0_{C^{p-var}_g},h)=\bar{y}$. Our goal is constructing a sequences $\{h^k\}_{k\geq 2}\subset\mathcal{H}$ such that, for large $k$, $X^x_1(\w, h_k)$ is arbitrarily close to $\bar{y}$.
    
    For $k\geq2$, define $\varphi_k:[0,1]\rightarrow[0,1-\frac{1}{k}]$ and $\psi_k:[0,1]\rightarrow[1-\frac{1}{k},1]$ such that, for every $r\in[0,1]$,
    \[
    \varphi_k(r)=\frac{k-1}{k}r \;\;\mbox{and}\;\; \psi_k(r)=\frac{r}{k}+1-\frac{1}{k}.
    \]
    Denote by $\tilde{h}^k:[0,1]\rightarrow\R^d$ the path such that $\tilde{h}^k_r=(h\circ\psi^{-1}_k)_r$ for every $r\in[1-\frac{1}{k},1]$ and is equal to zero otherwise. Note that $\tilde{h}^k\in\mathcal{H}$. We claim that the sequence defined by $\tilde{h}_k$, for $k\geq 2$, follows our requirements. 
    
    Denote by $\Delta=\{(s,t):0\leq s \leq t \leq 1\}$ and let $\omega: \Delta \rightarrow [0,+\infty)$ such that
    \begin{equation*}
        \omega(s,t) = \|\ww\|^p_{p-var;[s,t]},\quad\forall (s,t)\in\Delta.
    \end{equation*}
    The function $\omega$ is the so called \textit{control} of $\ww$ in the terminology of \cite{FV10}. It is well-known that $\omega$ is continuous and, for every $s\in[0,1]$, $\omega(s,s)=0$. In particular, being defined on a compact set, $\omega$ is uniformly continuous. 
    
    We claim now that the sequence $\{\w\circ\varphi_k\}_{k\geq2}$ is \textit{equibounded, equicontinuous} and \textit{equibounded in} $p$-\textit{variation norm}. Indeed, for $k\geq2$ and $(s,t)\in\Delta$, we have the uniform bound
    \begin{equation*}
        |(\w\circ\varphi_k)_t| = |\w_{\varphi_k(t)}| 
        \leq 
        \sup_{t\in[0,1]}|\w_t| < \infty
    \end{equation*}
    and
    \begin{equation*}
        d\left((\w\circ\varphi_k)_s,(\w\circ\varphi_k)_t\right)
        =
        d\left(\w_{\varphi_k(s)},\w_{\varphi_k(t)}\right)
        \leq
        \|\w\|_{p-var;[\varphi_k(s),\varphi_k(t)]}
        = \omega(\varphi_k(s),\varphi_k(t))^{1/p},
    \end{equation*}
    where
    \begin{equation*}
        |\varphi_k(t)-\varphi_k(s)|=\frac{k-1}{k}|t-s|\leq|t-s|.
    \end{equation*}
    Therefore, the uniform continuity of $\omega$ implies the (uniform) equicontinuity of the sequence. 
    Moreover, by invariance of the $p$-$var$ norm under reparametrisation, we have the following uniform bound 
    \begin{equation*}
        \|\w\circ\varphi_k\|_{p-var;[0,1]} = \|\w\|_{p-var;[0,1-1/k]} 
        \leq
        \|\w\|_{p-var;[0,1]}, \quad \forall k\geq 2.
    \end{equation*}
    Fix $p'>p$. Proposition 8.17 of \cite{FV10} then assures that the sequence $\{\w\circ\varphi_k\}_{k\geq2}$ is compact in $C^{p'-var}_g$.
    Repeating the same arguments one obtains that also the sequence $\{\w\circ\psi_k\}_{k\geq2}$ is compact in $C^{p'-var}_g$.

    Finally, we prove that the two sequences converge uniformly to explicit limits. Let $\w^1$ denote the constant element of $C^{p'-var}$ such that $\w^1_t=\w_1$ for every $t\in[0,1]$. For $k\geq2$ and $t\in[0,1]$, we have
    \begin{equation*}
        d\left((\w\circ\varphi_k)_t,\w_t\right)
        =
        d\left(\w_{\varphi_k(t)},\w_t\right)
        =
        \|\w\|_{p-var;[\varphi_k(t),t]}
        = \omega(\varphi_k(t),t)^{1/p}
    \end{equation*}
    and
    \begin{equation*}
        d\left((\w\circ\psi_k)_t,\w^1_t\right)
        =
        d\left(\w_{\psi_k(t)},\w_1\right)
        =
        \|\w\|_{p-var;[\psi_k(t),1]}
        = \omega(\psi_k(t),1)^{1/p},
    \end{equation*}
    where 
    \begin{equation*}
        |t-\varphi_k(t)|=\frac{1}{k}t\leq\frac{1}{k}\quad\text{and}\quad|1-\psi_k(t)|=\frac{1-t}{k}\leq\frac{1}{k}.
    \end{equation*}
    The uniform convergence of $\{\w\circ\varphi_k\}_{k\geq2}$ and $\{\w\circ\psi_k\}_{k\geq2}$ towards, respectively, $\w$ and $\w^1$ follows from the uniform continuity of $\omega$. This implies the unicity of the respective accumulation points in $C^{p'-var}_g$ for the two sequences and, by sub subsequence lemma, we infer that $\{\w\circ\varphi_k\}_{k\geq2}$ and $\{\w\circ\psi_k\}_{k\geq2}$ converge towards, respectively, $\w$ and $\w^1$ in $C^{p'-var}_g$.

    Since $\w^1$ is constant, notice that $\|\w^1\|_{p-var;[0,1]}=0$ and $\bar{y}=X^z_1(\w^1,h)$. Define, for $k\geq 2$, $z_k = X^x_1(\w\circ\varphi_k)$ and $y_k=X^{z_k}_1(\w\circ\psi_k, h)$. By stability estimates for RDEs (Theorem 12.11 of \cite{FV10}), we have
    \[
       |z_k-z| \leq C {\rho_{p'-var}}(\w\circ\varphi_k,\w), 
    \]
    and
    \[
       |y_k-\bar{y}| 
       \leq 
       C'\left( |z_k-z| + {\rho_{p'-var}}(\w\circ\psi_k, \w^1) \right)
       \leq
       C''\left( \rho_{p'-var}(\w\circ\varphi_k,\w) + {\rho_{p'-var}}(\w\circ\psi_k, \w^1) \right). 
    \]
    Since $\w\circ\varphi_k$ and $\w\circ\psi_k$ converge, respectively, to $\w$ and $\w^1$ in $C^{p'-var}\left([0,1];G^N(\R^d)\right)$, the $y_k$ can get arbitrarily close to $\bar{y}$ as $k$ goes to infinity.  
\end{proof}

\subsection{Non-empty interior of endpoints}

Denote, for every $h\in\mathcal{H}$, by
\[
Im^x_1(h)=\bigl\{\product{\nabla_{\mathcal{H}}X^x_1(\w, h)}{k}_{\mathcal{H}}\big\vert k\in\mathcal{H}\bigr\}
\]
and notice that $Im^x_1(h)\subseteq\R^n$ is a linear subspace.

The arguments used to prove the following two lemmas are adapted from Rifford (\cite{Rif14}) to suit our (rough) setting.

\begin{lemma}\label{lemma:value-vector-fields-in-the-image-of-the-gradient}
    For every $h\in\mathcal{H}$ and for every $j=1,\dots,d'$,
    \[
    W_j \left( X^x_1(\w,h) \right) \in Im^x_1(h).
    \]
\end{lemma}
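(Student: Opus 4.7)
The plan is to realize $W_j(X^x_1(\w, h))$ as a limit of elements of $Im^x_1(h)$, and then conclude using the fact that $Im^x_1(h)$, being a linear subspace of the finite-dimensional space $\R^n$, is automatically closed.

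The starting point is the analogue of formula \eqref{eq:formula-eq-gradient} for the mixed RDE \eqref{eq:rde-with-drift}: for every $k \in \cH$,
\[
\product{\nabla_{\cH} X^x_1(\w, h)}{k}_{\cH} = \sum_{i=1}^{d'} \int_0^1 J_{1 \leftarrow t} W_i\bigl(X^x_t(\w, h)\bigr) \, dk^i_t,
\]
where $J$ is the Jacobian of the flow of \eqref{eq:rde-with-drift}. The only features I will need are that $J_{1 \leftarrow 1} = Id_{\R^n}$ and that $t \mapsto J_{1 \leftarrow t} W_j(X^x_t(\w, h))$ is continuous on $[0,1]$; both follow from standard rough path stability applied to the Young pair $(\w, h) \in \cC^{p-var}_g([0,1];\R^{d+d'})$ as recalled in Section \ref{subsec:rp}.

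Using the assumption $C^\infty([0,1];\R^{d'}) \subset \cH$, I would then build an approximate Dirac pulse at the endpoint, concentrated in coordinate $j$. Concretely, take smooth scalars $\phi_n : [0,1] \to [0,1]$ with $\phi_n \equiv 0$ on $[0, 1-1/n]$, $\phi_n(1) = 1$ and $\dot\phi_n \geq 0$, and set $k^n := \phi_n \, e_j \in \cH$, where $e_j$ denotes the $j$-th standard basis vector of $\R^{d'}$. Then
\[
\product{\nabla_{\cH} X^x_1(\w, h)}{k^n}_{\cH} = \int_{1 - 1/n}^{1} J_{1 \leftarrow t} W_j\bigl(X^x_t(\w, h)\bigr) \, \dot\phi_n(t) \, dt.
\]
Since $\int_{1-1/n}^{1} \dot\phi_n(t) \, dt = 1$ for all $n$, and $t \mapsto J_{1 \leftarrow t} W_j(X^x_t(\w, h))$ is continuous at $t = 1$ with limiting value $W_j(X^x_1(\w, h))$, the right-hand side converges to $W_j(X^x_1(\w, h))$ as $n \to \infty$.

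Each term of this sequence lies in $Im^x_1(h) \subset \R^n$, and since $Im^x_1(h)$ is a linear subspace of a finite-dimensional Euclidean space, it is closed; the limit $W_j(X^x_1(\w, h))$ therefore belongs to $Im^x_1(h)$. The only point requiring care is the derivation of the gradient formula and the endpoint continuity of $J_{1 \leftarrow \cdot}$ for the mixed RDE \eqref{eq:rde-with-drift} (with two distinct vector field families), but this is a routine extension of what is recorded in Section \ref{subsec:rp} via Young pairing of $\w$ and $h$.
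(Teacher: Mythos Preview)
Your proof is correct and follows essentially the same approach as the paper: both construct a sequence of smooth test functions in $\cH$ whose derivatives act as approximate Dirac masses at $t=1$ in the $j$-th coordinate, use the formula \eqref{eq:formula-eq-gradient} together with continuity of $t \mapsto J_{1\leftarrow t} W_j(X_t)$ and $J_{1\leftarrow 1}=Id$ to pass to the limit, and conclude by closedness of the finite-dimensional subspace $Im^x_1(h)$. Your choice of $k^n=\phi_n e_j$ with $\int_0^1 \dot\phi_n=1$ is in fact slightly cleaner than the paper's normalization, but the argument is the same.
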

\begin{proof}
    Recall from \eqref{eq:formula-eq-gradient} that, for every $k\in\mathcal{H}$,
    \begin{equation*} 
        \product{\nabla_{\mathcal{H}}X^x_1(\w, h)}{k}_{\mathcal{H}}
        =
        \sum_{i=1}^d \int_0^1  J_{1 \leftarrow t} W_j(X^x_t(\w,h)) dk^i_t,
    \end{equation*}
    where the Jacobian $J_{1 \leftarrow t}$ solves a RDE of the form \eqref{eq:formula-eq-jacobian}.

    Fix $j\in\{1,\dots,d'\}$, and denote by $e_j$ the $j$-th element of the canonical basis in $\R^d$. For every $\varepsilon\in(0,1)$, let $f^{\varepsilon}:[0,1]\rightarrow\R$ be a smooth, non-decreasing function such that
    \[
    f^{\varepsilon} \equiv 0 \mbox{ on }[0,1-\varepsilon]\quad\mbox{and}\quad f^{\varepsilon} \equiv 1 \mbox{ on }[1-\varepsilon+\varepsilon^2,1],
    \]
    and define the smooth path $k^\varepsilon\in\mathcal{H}$ such that $k^{\varepsilon}(0)=0$ and its gradient $\dot{k}^\varepsilon$ is given by
    \begin{equation*}
        \dot{k}^\varepsilon_t = 
        \begin{cases}
            0 & \mbox{if } 0\leq t \leq 1-\varepsilon\\
            (1/\varepsilon)f_t^\varepsilon\, e_j & \mbox{if } 1-\varepsilon < t \leq 1
        \end{cases}.
    \end{equation*}
    We have
    \begin{equation*}
        \product{\nabla_{\mathcal{H}}X^x_1(\w, h)}{k^\varepsilon}_{\mathcal{H}}
        =
        \frac{1}{\varepsilon} \int_{1-\varepsilon}^1  J_{1 \leftarrow t} W^j(X^x_t(\w,h)) f^\varepsilon_t dt.
    \end{equation*}
    Hence
    \begin{align*}
        \big| &\product{\nabla_{\mathcal{H}}X^x_1(\w, h)}{k^\varepsilon}_{\mathcal{H}} - W_j(X^x_1(\w,h)) \big| \\
        &\quad =\left| \frac{1}{\varepsilon} \int_{1-\varepsilon}^1  J_{1 \leftarrow t} W_j(X^x_t(\w,h)) f^\varepsilon_t dt - \frac{1}{\varepsilon} \int_{1-\varepsilon}^1 W_j(X^x_1(\w,h)) dt\right|\\
        &\quad \leq \frac{1}{\varepsilon} \int_{1-\varepsilon}^1 \left| J_{1 \leftarrow t} W_j(X^x_t(\w,h))f^\varepsilon_t  - W_j(X^x_1(\w,h)) \right| dt\\
        &\quad \leq \frac{1}{\varepsilon} \int_{1-\varepsilon}^1 \Big(\|J_{1 \leftarrow t}-Id_{\R^n}\|_{\modif{F}} \left| W_j(X^x_t(\w,h))\right| f^\varepsilon_t 
        + \left| W_j(X^x_t(\w,h)) - W_j(X^x_1(\w,h)) \right| f^\varepsilon_t \\ 
        &\quad\quad\quad\quad\quad\quad\quad\quad\quad\quad\quad\quad\quad\quad\quad\quad\quad\quad\quad\quad\quad\quad\quad\quad\quad\quad\quad\quad\quad+ \left| W_j(X^x_1(\w,h)) \right| |f^\varepsilon_t - 1| \Big)  dt\\
        &\quad \leq \frac{1}{\varepsilon} \int_{1-\varepsilon}^1 \Big(\|J_{1 \leftarrow t}-Id_{\R^n}\|_{\modif{F}} \left| W_j(X^x_t(\w,h))\right| 
        + \left| W_j(X^x_t(\w,h)) - W_j(X^x_1(\w,h)) \right| \Big)  dt \\ 
        &\quad\quad\quad\quad\quad\quad\quad\quad\quad\quad\quad\quad\quad\quad\quad\quad\quad\quad\quad\quad\quad\quad\quad\quad\quad\quad\quad\quad\quad+\varepsilon \left| W_j(X^x_1(\w,h)) \right|,
     \end{align*}
     \modif{where $\|\cdot\|_F$ is the Frobenius norm of an $n\times n$ matrix.} By regularity properties of RDEs, both mappings $t\mapsto J_{1 \leftarrow t}$ and $t\mapsto X^x_t(\w,h)$ are continuous and $J_{1 \leftarrow 1} = Id_{\R^n}$. Therefore, it holds
    \[
    \lim_{\varepsilon\rightarrow 0^+}\product{\nabla_{\mathcal{H}}X^x_1(\w, h)}{k^\varepsilon}_{\mathcal{H}} = W_j(X^x_1(\w,h)). 
    \]
    Since $Im^x_1(h)$ is a linear subspace of $\R^n$, it is closed.  We conclude that $W_j(X^x_1(\w,h))\in Im^x_1(h)$. 
\end{proof}

\begin{lemma}\label{lemma:non-empty-intern}
	Assume that the vector fields $\{W_j\}_{j=1}^{d'}$ are bracket-generating.
    Given a geometric rough path $\w$ $\in$ $C^\alpha_g$, let, for every $h\in\mathcal{H}$, $X(\w, h)$ be the solution of \eqref{eq:rde-with-drift}. Then, the set of endpoints 
    $E^x_1(\w) = \{ X^x_1(\w, h)| h\in\mathcal{H} \}$ has non-empty interior. In particular, there exists an open subset $U$ of $E^x_1(\w)$ attained by non-singular controls.
\end{lemma}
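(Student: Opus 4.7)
The plan is to reduce to producing a single non-singular control, i.e.\ some $h^{*} \in \cH$ for which the linear map $k \mapsto \left\langle \nabla_{\cH} X^x_1(\w,h^{*}), k \right\rangle_{\cH}$ is surjective onto $\R^n$. Once such an $h^{*}$ is available, one can pick $k_1,\ldots, k_n \in \cH$ whose images span $\R^n$, and then apply the finite-dimensional implicit function theorem to the $C^1$ map $F: \R^n \to \R^n$, $(\lambda_1,\ldots,\lambda_n) \mapsto X^x_1\bigl(\w, h^{*} + \sum_i \lambda_i k_i\bigr)$, whose Jacobian at $0$ is invertible. This yields an open neighbourhood $U$ of $X^x_1(\w,h^{*})$ inside $E^x_1(\w)$. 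Moreover, because the Malliavin-type matrix $\mathcal{M}_{\w}(h)$ depends continuously on $h$ via the rough path continuity of the RDE \eqref{eq:formula-eq-jacobian}, the set of non-singular controls is open, so shrinking $U$ if necessary gives that every point of $U$ is attained by a non-singular control.

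To produce $h^{*}$, I would argue by contradiction. Set $D := \sup_{h \in \cH} \dim Im^x_1(h) \in \{0,1,\ldots,n\}$ and suppose $D < n$. Pick $\bar{h}$ achieving this supremum. By rough path stability of $X$ and $J$, the linear map $k\mapsto \langle \nabla_{\cH} X^x_1(\w,h),k\rangle_{\cH}$ varies continuously with $h$ in operator norm, and rank is lower semicontinuous; combined with the maximality of $D$ this means that there is a neighbourhood $\mathcal{V}$ of $\bar h$ in $\cH$ on which $\dim Im^x_1(h) = D$. Pick $k_1,\ldots,k_D \in \cH$ whose images span $Im^x_1(\bar{h})$; by continuity these images remain linearly independent, hence span $Im^x_1(h)$, for all $h$ in a (possibly smaller) neighbourhood of $\bar h$.

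Now consider the $C^1$ map
\[
F : (\lambda_1,\ldots,\lambda_D) \in B \;\longmapsto\; X^x_1\Bigl(\w,\; \bar h + \sum_{i=1}^{D} \lambda_i k_i \Bigr) \in \R^n,
\]
defined on a small ball $B \subset \R^D$ around the origin, and apply the classical constant rank theorem: the image $\Sigma = F(B)$ is (after possibly shrinking $B$) a $D$-dimensional embedded submanifold of $\R^n$, with $T_{F(\lambda)} \Sigma = Im^x_1(\bar h + \sum \lambda_i k_i)$. By Lemma \ref{lemma:value-vector-fields-in-the-image-of-the-gradient}, each $W_j(F(\lambda))$ lies in this tangent space, so $W_1,\ldots,W_{d'}$ are all tangent to $\Sigma$. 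Flowing along them then keeps trajectories in $\Sigma$, which by the standard Frobenius-type ODE computation forces every iterated Lie bracket of the $W_j$'s to be tangent to $\Sigma$ as well. The bracket-generating assumption then gives $T_{F(0)}\Sigma = \R^n$, contradicting $D < n$. Hence $D = n$ and a non-singular control exists.

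The step I expect to be the most delicate is establishing, from the rough path continuity of the Jacobian RDE, a clean uniform statement on a neighbourhood of $\bar h$ (constant rank of $Im^x_1(\cdot)$, joint continuity of a spanning family), so that the finite-dimensional constant rank theorem applies without appealing to any infinite-dimensional analogue. The Frobenius/bracket step, while standard, also needs to be carried out with the fact that $\Sigma$ is a priori only a local embedded submanifold, so the flows have to be restricted to small times and one works in a chart on $\Sigma$.
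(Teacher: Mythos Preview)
Your proposal is correct and follows essentially the same route as the paper: maximise $\dim Im^x_1(h)$, use lower semicontinuity to get constant rank on a finite-dimensional slice, apply the rank theorem to obtain an embedded submanifold $\Sigma$, invoke Lemma~\ref{lemma:value-vector-fields-in-the-image-of-the-gradient} to see that the $W_j$ are tangent to $\Sigma$, and then use the standard fact (the paper cites \cite[Lemma~1.13]{Rif14}) that this forces all iterated brackets to be tangent as well, contradicting $D<n$ via the bracket-generating assumption. The only cosmetic difference is that the paper does not phrase it as a contradiction and extracts the open set $U$ directly from the same rank-theorem embedding once $m=n$, rather than via a separate implicit-function-theorem step.
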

\begin{proof}
    Let
    \[
        m = \max\left\{ \text{dim}\left(Im^x_1(h)\right) | h\in\mathcal{H} \right\}
    \]
    Since the set of vector fields $\{W_j\}_{j=1}^{d'}$ satisfies the H\"ormander condition, $\text{Span}\{W_1(x'),\dots, W_{d'}(x')\}$ has dimension at least 1 for every $x'\in \R^n$. 
    From Lemma \ref{lemma:value-vector-fields-in-the-image-of-the-gradient} we have that, for every $1\leq j\leq d'$ and every $h\in\mathcal{H}$, $W_j\big(X^x_1(\w, h)\big)\in Im^x_1(h)$ and therefore $m\geq 1$.
    Hence, there exist $h^*,h^1,\dots,h^m\in\mathcal{H}$ such that $\text{dim}\left(Im^x_1(h^*)\right)=m$ and the linear map
    \begin{align*}
        \mathcal{L}:\R^m &\rightarrow \R^n\\
        \l = (\l^1,\dots,\l^m) &\mapsto \product{\nabla_{\mathcal{H}}X^x_1(\w, h^*)}{\sum^m_{j=1}\l^j h^j}_{\mathcal{H}}
    \end{align*}
    is injective. 
    Since the mapping $h\mapsto\text{dim}( Im^x_1(h) )$ is lower semicontinuous, the rank of a control $h$ is equal to $m$ as soon as $h$ is close enough to $h^*$ in $\mathcal{H}$. 
    Hence, by the rank theorem (see e.g. \cite{lee22}), there exists an open neighbourhood $\mathcal{V}$ of $0\in\R^m$ where the mapping
    \begin{align*}
        \mathcal{E}:\mathcal{V} &\rightarrow \R^n\\
        \l = (\l^1,\dots,\l^m) &\mapsto X^x_1\left(\w,h^*+\sum^m_{j=1}\l^j h^j\right)
    \end{align*}
    is an embedding and whose image is a submanifold $S$ of class $C^1$ in $\R^n$ of dimension $m$. Moreover, by construction, there holds, for every $\l\in\mathcal{V}$,
    \[
    Im^x_1\left(h^*+\sum^m_{j=1}\l^j h^j\right) = D_{\l}\mathcal{E}(\R^m)=T_{\mathcal{E}(\l)}S. 
    \]
    By Lemma \ref{lemma:value-vector-fields-in-the-image-of-the-gradient}, we infer that
    \[
    W_j(\mathcal{E}(\l)) = W_j\left(X^x_1\left(\w,h^*+\sum^m_{j=1}\l^j h^j\right)\right) \in T_{\mathcal{E}(\l)} S,\quad\forall j = 1,\dots,d',\;\forall l\in \mathcal{V},
    \]
    or, equivalently,
    \[
    W_j(x') \in T_{x'} S\quad\forall j = 1,\dots,d',\;\forall x'\in S.
    \]
    It is well-known in differential geometry (see Lemma 1.13 in \cite{Rif14}) that the above line implies that also all the Lie brackets involving $W_1(x'),\dots,W_{d'}(x')$ belong to $T_{x'}S$, for all $x'\in S$. Since the vector fields satisfy H\"ormander condition, $T_{x'}S$ must be of dimension $n$ for every $x'\in S$, from which we deduce that $m=n$. In particular, $h^*$ is non-singular and the same is true for all elements of the set $\{h^*+\sum^m_{j=1}\l^j h^j\vert\, (\l^1,\dots,\l^m)\in\mathcal{V}\}$. 
    Finally, $U:=\mathcal{E}(\mathcal{V}) \subset E^x_1(\w)$ is a non-empty open subset which concludes the proof.    
\end{proof}

\begin{remark}
    Lemmas \ref{lemma:dense-image} and \ref{lemma:non-empty-intern} hold true on arbitrary time interval, that is, for every $0<t\leq1$, $E^x_t(\w) = \{ X^x_t(\w, h)| h\in\mathcal{H}\}$ is dense and has non-empty interior.
\end{remark}

\begin{proof}[Proof of Theorem \ref{thm:chow-rashevskii}]
    Consider $\w^1,\w^2:[0,1/2]\rightarrow \R^d$ such that, for every $t\in [0,1/2]$,
    \[
    \w^1_t=\w_t \mbox{ and } \w^2_t= \w_{1-t}.
    \]
    We know from Lemma \ref{lemma:non-empty-intern} that the set $E^x_{1/2}(\w^1) = \{ X^x_{1/2}(\w^1, h)| h\in\mathcal{H} \}$ contains an open subset attained by non-singular controls, and, from Lemma \ref{lemma:dense-image}, that $E^y_{1/2}(\w^2)$ is dense in $\R^n$. It follows that the intersection $E^x_{1/2}(\w^1)\cap E^y_{1/2}(\w^2)$ is non-empty and, hence, there exist $h^1,h^2:[0,1/2]\rightarrow \R^{d'}$ in $\cH\vert_{[0,1/2]}$, with $h^1$ non-singular, such that
    \[
        X^x_{1/2}(\w^1, h^1) = X^y_{1/2}(\w^2, h^2).
    \]
    Moreover, we can assume that $h^1_{1/2}=h^2_{1/2}$. Define $h:[0,1]\rightarrow \R^{d'}$ such that
    \begin{equation*}
    	h_t = \begin{cases}
        h^1_t & \text{if } t \in [0,1/2]\\
        h^2_{1-t} & \text{if } t \in [1/2,1]
    \end{cases},
    \end{equation*}
    and observe that $h\in\mathcal{H}$. By construction, it holds that $X^x_1(\w, h)=y$.
    
    It remains to prove that $h$ is non-singular. For every $k\in\cH$, define $\tilde{k}\in\cH$ such that
    \[
    	\tilde{k}\equiv k\text{ on }[0,1/2],\quad\text{and}\quad\tilde{k}\equiv k_{1/2}\text{ on }[1/2,1].
    \]
    Notice that, by construction of $h$ and $\tilde{k}$,
    \[
    	\product{\nabla_{\cH}X^x_1(\w, h)}{\tilde{k}}_{\cH} = J_{1\leftarrow 1/2}\product{\nabla_{\cH}X^x_{1/2}(\w^1, h^1)}{k\vert_{[0,1/2]}}_{\cH\vert_{[0,1/2]}}.
    \]
    Since the Jacobian $J_{1\leftarrow 1/2}$ is invertible and $h^1$ is a non-singular control, it follows that $Im^x_1(h)=\R^n$ and this concludes the proof. 
\end{proof}

\section{Global convergence results} \label{sec:conv}

In this section we will obtain global convergence results for the gradient flow, i.e., when started from (almost) any initial condition. We will assume that $\cH$ is a Sobolev space $H_0^{\delta}$, for some $\delta \in (1/2,1]$, and we first record some facts on this space, as well as the dual norm $\|\cdot\|_{\cH^{\vee}}$ defined in \eqref{eq:defHvee}. % is equivalent to the $H^{1-\delta}$ norm, which we record in the lemma below.

\begin{lemma} \label{lem:Hdeltavee}
Let $\delta \in (1/2,1]$, then it holds that :

(i) Let $q = \delta^{-1}$, then  $H_0^{\delta} \subset C^{q-var}$ and in fact, there exists $C_1$ s.t., for any $u<v \in [0,1]$, $\| f\|_{q-var;[u,v]} \leq C_1 \| f \|_{H_0^\delta} (v-u)^{\delta - 1/2}$.

(ii) There exists $C_2>0$ s.t. for all $f$, $\| f \|_{(H_0^{\delta})^{\vee}} \geq C_2 \|f \|_{H^{1-\delta}}$.

(iii) There exists $C_3>0$ s.t., for any $0< \eta <1$, any $f \in(H_0^{\delta})^{\vee}$, it holds that
\begin{equation} \label{eq:lowerboundHvee}
\left\| f \right\|_{(H_0^{\delta})^{\vee}} \geq C_3  \eta^{ \delta - \frac{1}{2}}\sup_{i=1,\ldots, d}\, \inf_{t \in [1-\eta, 1]} |f_i(t)| . 
\end{equation}
\end{lemma}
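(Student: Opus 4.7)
My plan is to prove the three bounds separately, using in each case only the defining Slobodeckij norm and basic Sobolev facts on $[0,1]$.

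For (i), the starting point is a local Sobolev embedding obtained by rescaling: on each partition interval $[s_i,s_{i+1}]$ of length $h_i$, the function $\tilde f(x):=f(s_i+h_i x)$ satisfies
\[
\|\tilde f\|_{H^\delta([0,1])}^2 = h_i^{2\delta-1}\,\|f\|_{H^\delta;[s_i,s_{i+1}]}^2
\]
by a direct change of variables in the Slobodeckij integral. The classical embedding $H^\delta([0,1])\hookrightarrow C^{\delta-1/2}([0,1])$ (valid since $\delta>1/2$) then gives $|f(s_{i+1})-f(s_i)|\leq C\,h_i^{\delta-1/2}\,\|f\|_{H^\delta;[s_i,s_{i+1}]}$. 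Raising to the power $q=1/\delta$ and applying Hölder's inequality with conjugate exponents $2/q$ and $2/(2-q)$ (well defined since $q\in[1,2)$) yields
\[
\sum_i|f(s_{i+1})-f(s_i)|^q \leq C^q \Bigl(\sum_i \|f\|_{H^\delta;[s_i,s_{i+1}]}^2\Bigr)^{q/2}\Bigl(\sum_i h_i^{q(\delta-1/2)\cdot 2/(2-q)}\Bigr)^{(2-q)/2}.
\]
The identity $q\delta=1$ collapses the inner exponent in the last factor to $1$, so that factor is bounded by $(v-u)^{(2-q)/2}$, while the first factor is bounded by $\|f\|_{H^\delta}^q$ thanks to the disjointness of the localised seminorms. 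Taking the $q$-th root and noting $(2-q)/(2q)=\delta-1/2$ gives (i).

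For (ii), I would use Sobolev duality. For smooth $h$, $\int_0^1 f\,dh = \int_0^1 f\,\dot h\,dt$, and the derivative map $h\in H_0^\delta\mapsto\dot h$ is a norm-equivalent isomorphism onto $H^{\delta-1}([0,1])$, with inverse the primitive $g\mapsto\int_0^\cdot g$. Since $\delta>1/2$ yields $1-\delta\in[0,1/2)$, there is no trace obstruction and $H^{\delta-1}=(H^{1-\delta})^*$ via the $L^2$-pairing, whence
\[
\|f\|_{(H_0^\delta)^\vee} = \sup_{\|h\|_{H^\delta}\leq 1}\int_0^1 f\,\dot h\,dt \geq C_2\sup_{\|g\|_{H^{\delta-1}}\leq 1}\langle f,g\rangle = C_2\,\|f\|_{H^{1-\delta}}.
\]

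For (iii), I would test against an explicit ramp. Set $M:=\sup_i\inf_{t\in[1-\eta,1]}|f_i(t)|$ and, for arbitrary $\epsilon>0$, pick $i^*$ with $\inf_{t\in[1-\eta,1]}|f_{i^*}(t)|\geq M-\epsilon$; continuity of $f$ then fixes a constant sign $\sigma\in\{\pm 1\}$ for $f_{i^*}$ on $[1-\eta,1]$. Define $h$ by $h\equiv 0$ on $[0,1-\eta]$ and $h(t):=\sigma(t-(1-\eta))e_{i^*}$ on $[1-\eta,1]$. Then $\int f\,dh=\int_{1-\eta}^1|f_{i^*}|\,dt\geq(M-\epsilon)\eta$, while a direct computation of the Slobodeckij integral, splitting into the regions $\{s,t\in[1-\eta,1]\}$ and $\{s\leq 1-\eta\leq t\}$, gives $\|h\|_{H^\delta}\leq C\eta^{3/2-\delta}$ (the case $\delta=1$ is immediate from $\|h\|_{H^1}^2=\eta$). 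Normalising $h$ and letting $\epsilon\to 0$ yields the claim. The main technical point is the duality step of (ii): making the norm equivalence $\|h\|_{H_0^\delta}\sim\|\dot h\|_{H^{\delta-1}}$ rigorous on the bounded interval requires some care (e.g.\ via extension by zero past $0$, which preserves $H^\delta$-regularity for $\delta<1$ thanks to $h(0)=0$). The calculations in (i) and (iii), by contrast, are routine.
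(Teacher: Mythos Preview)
Your proof is correct. Parts (i) and (iii) follow essentially the same line as the paper: for (i) the paper simply cites \cite{FV06}, whereas you supply the direct rescaling-plus-H\"older argument, and for (iii) both proofs test against the same ramp $\phi_\eta(s)=(1-\eta^{-1}(1-s))_+$ (your $h$ is $\eta\,\phi_\eta$ up to sign and coordinate), with your treatment of the sign of $f_{i^*}$ being slightly more careful than the paper's one-line estimate.

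The only genuine divergence is in (ii). The paper works by hand in the trigonometric Littlewood--Paley decomposition: it uses the equivalences $\|g\|_{H^\gamma}^2\sim\sum_n 2^{2n\gamma}\|\Delta_n g\|_{L^2}^2$ and $\|\Delta_n h\|_{L^2}\sim 2^{-n}\|\Delta_n h'\|_{L^2}$, then chooses the test function $h$ by prescribing $\Delta_n(h')$ as a suitable multiple of $\Delta_n(f)$ so that the ratio $\langle f,h'\rangle/\|h\|_{H^\delta}$ reproduces $\|f\|_{H^{1-\delta}}$. Your route is the abstract one: differentiation as an isomorphism $H_0^\delta\to H^{\delta-1}$ combined with the duality $(H^{1-\delta})^*=H^{\delta-1}$. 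This is cleaner conceptually and makes the structure transparent, but as you yourself flag, making the norm equivalence $\|h\|_{H_0^\delta}\sim\|\dot h\|_{H^{\delta-1}}$ precise on $[0,1]$ (with the Slobodeckij definition on one side and a negative-order space on the other) ultimately requires either extension arguments or the same Fourier characterisation the paper invokes. The paper's explicit construction has the advantage of being self-contained and sidestepping the question of what $\int_0^{\cdot}g$ means for $g\in H^{\delta-1}$; your approach has the advantage of identifying $(H_0^\delta)^\vee$ with $H^{1-\delta}$ in one stroke rather than just giving a one-sided bound.
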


\begin{proof}
(i) is proven in \cite{FV06}.

The proof of (ii) is an immediate consequence of the Fourier characterisation of Sobolev norms, and is deferred to subsection \ref{subsec:Fourier} below.

For (iii) : let $\phi_{\eta}(s) = \left( 1 -  \eta^{-1}(1-s) \right)_+$, an immediate computation gives $\|\phi_\eta\|_{H^{\delta}} = C \eta^{1/2-\delta}$, so that
\[ \left\| f \right\|_{(H_0^{\delta})^{\vee}} \geq C \eta^{\delta-1/2} \sup_i \left|  \eta^{-1} \int_{1-\eta}^1 f^i(s) ds \right| \geq \eta^{\delta-1/2} \sup_{i=1,\ldots, d} \inf_{[1-\eta, 1]} |f_i|.
\]
\end{proof}

We will assume that the objective function $g: \R^n \to \R$ satisfies a local \L{}ojasiewicz inequality and has compact sublevel sets :

\begin{equation} \label{eq:locLoj}
\forall \mbox{ compact } K \subset \R^n, \; \exists c_K>0, \quad | \nabla g | \geq c_K \sqrt{g}\mbox{ on } K.
\end{equation}
\begin{equation} \label{eq:CompSub}
\forall r \geq 0, \quad \left\{ x \; : g(x) \leq r \right\} \mbox{ is compact}.
\end{equation}

\subsection{Elliptic case}

In this section we will prove convergence for arbitrary initial condition in an elliptic setting, i.e., assuming that
\begin{equation} \label{eq:elliptic}
\forall x \in \R^n,  \exists c >0\text{ s.t.} \,\forall q \in \R^n, \quad \sum_{i} \left\langle V_i(x), q \right \rangle^2 \geq c |q |^2.
\end{equation}

\begin{thm} \label{thm:elliptic}
Let $\mathcal{H} = H^{\delta}_0$ for some $\delta \in (1/2, 1]$, and $\w \in \mathcal{C}_g^{p-var}$, with  $\frac{1}{p} + \delta > 1$.
Assume that $g\in C^2(\R^n)$ satisfies \eqref{eq:locLoj}-\eqref{eq:CompSub}, and that the $V_i$ are $C^{\infty}_b$ and such that \eqref{eq:elliptic} above holds.
Then, for the gradient flow $(h(s)) _{s \geq 0}$ defined in section \ref{sec:defGF}, it holds that
\[ \exists \bar{h} \in \cH, \;\;\; \lim_{s \to \infty} h(s) = \bar{h} \mbox{ in } \mathcal{H}, \mbox{ with } \mathcal{L}(\bar{h}) = 0. \]
\end{thm}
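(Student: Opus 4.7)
My plan is to apply the convergence criterion Proposition \ref{prop:Loj} along the gradient flow trajectory. By Proposition \ref{prop:LojL}, it is enough to establish lower bounds on (i) the \L{}ojasiewicz constant of $g$ at $X_1(\w+h)$ and (ii) the Malliavin quantity $c_\w(h)$, for $h$ on the trajectory. For (i), since $\cL_\w$ is nonincreasing along the flow, $X_1(\w+h(s))$ stays in the compact sublevel set $K := \{g \leq \cL_\w(0)\}$, on which \eqref{eq:locLoj} yields $\|\nabla g\|_{\R^n} \geq c_K\sqrt{g}$ for some $c_K > 0$.

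The main step is (ii): I will show that for every $h$ with $X_1(\w+h) \in K$,
\[
c_\w(h) \;\geq\; \frac{C(\w)}{1 + \|h\|_\cH}.
\]
By \eqref{eq:FormulaGradient} and Lemma \ref{lem:Hdeltavee}(iii), for $\xi \in \R^n$ with $|\xi|=1$ it suffices to bound $\eta^{\delta - 1/2}\sup_{i}\inf_{t\in[1-\eta,1]}|J_{1\leftarrow t}V_i(X_t)\cdot\xi|$ from below for a suitable $\eta > 0$. At $t=1$, $J_{1\leftarrow 1} = \mathrm{Id}$, and by the pointwise ellipticity \eqref{eq:elliptic} combined with the compactness of $K$, there is a uniform $\hat c > 0$ and an index $i^*$ (depending on $X_1$ and $\xi$) with $|V_{i^*}(X_1)\cdot\xi|\geq \sqrt{\hat c/d}$. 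Standard rough-path stability estimates, applied to the RDE satisfied by $t \mapsto J_{1\leftarrow t}V_{i^*}(X_t)\cdot\xi$, yield an oscillation bound on $[1-\eta,1]$ of the form $\Psi(\|\w\|_{p-var;[1-\eta,1]} + \|h\|_{q-var;[1-\eta,1]})$ for some continuous $\Psi$ with $\Psi(0) = 0$, uniformly for $X_1 \in K$.

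To select $\eta$, I would use Lemma \ref{lem:Hdeltavee}(i), which gives $\|h\|_{q-var;[1-\eta,1]} \leq C_1 \|h\|_\cH \eta^{\delta-1/2}$, together with continuity of $\w$ in $p$-variation, which gives $\|\w\|_{p-var;[1-\eta,1]} \to 0$ as $\eta \to 0$. Fix $\epsilon > 0$ small enough that $\Psi(2\epsilon) \leq \tfrac12\sqrt{\hat c/d}$, and take
\[
\eta \;:=\; \min\bigl(\eta_0(\w),\; (\epsilon/(C_1\|h\|_\cH))^{2/(2\delta-1)}\bigr),
\]
where $\eta_0(\w) > 0$ is chosen so that $\|\w\|_{p-var;[1-\eta_0,1]} \leq \epsilon$. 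A direct calculation then gives $\eta^{\delta-1/2} \geq C'(\w)/(1+\|h\|_\cH)$, and Lemma \ref{lem:Hdeltavee}(iii) yields the required bound on $c_\w(h)$.

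Combining (i) and (ii), along the trajectory one obtains
\[
\|\nabla_\cH \cL_\w(h(s))\|_\cH \;\geq\; \frac{c_K C(\w)}{1 + \|h(s)\|_\cH}\sqrt{\cL_\w(h(s))},
\]
and I would conclude by applying Proposition \ref{prop:Loj} with $c(r) = c_K C(\w)/(1+r)$: since $C(z) = \int_0^z c$ diverges, condition \eqref{eq:CR} is met for $R$ large enough, giving convergence of $h(s)$ in $\cH$ to some $\bar h$ with $\cL_\w(\bar h) = 0$. The main technical obstacle I expect is the uniform rough-path oscillation estimate for $t \mapsto J_{1\leftarrow t}V_{i^*}(X_t)\cdot\xi$ on $[1-\eta,1]$: both the flow $X_t$ and its Jacobian $J_{1\leftarrow t}$ depend on $h$, so one has to argue that for the chosen $\eta$ the driving rough-path norm on $[1-\eta,1]$ is so small that these quantities remain close to their values at $t=1$, uniformly as $X_1$ ranges over $K$ and as $h$ varies subject to $X_1(\w+h) \in K$.
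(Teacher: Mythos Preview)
Your proposal is correct and follows essentially the same approach as the paper's proof: both use ellipticity on the compact sublevel set $K$ to find an index $i^*$ with $|V_{i^*}(X_1)\cdot\xi|$ uniformly bounded below, then use rough path continuity of $(X,J)$ as an RDE solution backward from $t=1$ to propagate this lower bound to an interval $[1-\eta,1]$, with $\eta$ chosen via Lemma~\ref{lem:Hdeltavee}(i) so that $\eta^{\delta-1/2}\gtrsim 1/(1+\|h\|_\cH)$, and conclude via Lemma~\ref{lem:Hdeltavee}(iii) and Corollary~\ref{cor:c1r}. The technical obstacle you flag is precisely what the paper handles by noting that $(X_t,J_{1\leftarrow t})$ solves an RDE driven by $\w+h$ with value $(z,I)$ at $t=1$, so standard rough path bounds (depending only on $K$ and the $V_i$) give the uniform oscillation control once $\|\w\|_{p-var;[1-\eta,1]}+\|h\|_{q-var;[1-\eta,1]}$ is small; your resolution via $\eta_0(\w)$ and Lemma~\ref{lem:Hdeltavee}(i) matches the paper's choice of $t_0$ and the condition $(1-t)^{\delta-1/2}\lesssim 1/\|h\|_\cH$.
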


\begin{proof}
Note that by Lemma \ref{lem:Hdeltavee} (i), the assumption on $p$ and $\delta$ ensures $H_0^{\delta} \subset C^{q-var}$ with $q=\delta^{-1}$ satisfying $\frac{1}{q} + \frac{1}{p} > 1$ (complementary Young regularity), so that the gradient flow is well-defined. 

By Corollary \ref{cor:c1r} and Proposition \ref{prop:LojL},  it is enough to prove that, for any $\alpha \geq 0$, there exists a ($\w$-dependent) constant $c>0$ s.t.
\begin{equation} \label{eq:cr}
\forall r \geq 0, \quad \inf\left\{  c_{\w}(h) \,:\, \|h\|_{\cH} \leq r, \,\cL_{\w}(h) \leq \alpha \right\} \geq \frac{c}{1+r}.
\end{equation}

We now fix $\alpha \geq 0$, and consider $h \in \cH$ with $\cL_{\w}(h) \leq \alpha$. 

Recall that $c_{\w}(h) =  \inf_{|\xi|=1} \| \phi_{\xi} \|_{(H_0^\delta)^\vee}$, where $\phi_\xi(t) = \left( \left \langle J_{1 \leftarrow t} V^i(X_t), \xi\right \rangle \right)_{i=1,\ldots,d}$, where $J$, $X$ correspond to $\w+h$. 

Let $\xi$  be a vector in $\R^n$ with $|\xi|=1$. Let $z = X_1^x(\w + h)$. Since $g$ has compact sublevel sets, the ellipticity constant is uniform over all $z$ with $g(z) \leq \alpha$, and in fact there exists $c_0>0$ (not depending on $h$ or $\xi$) such that, there exists $i$ with $|V_i(z) \cdot \xi  | \geq c_0$. % Assume $V_i(z) \cdot \xi \geq c_0$ (the other case is treated similarly.

Recall that $(X_t,J_{1 \leftarrow t})$ is the solution to a RDE driven by $\w+h$, with value $(z,I)$ at $t=1$. By rough path bounds and Young translation bounds, there exists a ($\w$-dependent) $c_1>0$ and $t_0 \in [0,1]$ s.t.,
\[
t_0 \leq t \mbox{ and }\|h\|_{q-var;[t,1]} \leq c_1 \;\;\Rightarrow \;\;  \left|\left \langle J_{1 \leftarrow u} V^i(X_u), \xi\right \rangle \right| \geq \frac{c_0}{2}, \; \forall u \in [t,1].
\]
By Lemma \ref{lem:Hdeltavee} (i), the l.h.s. holds if $t$ is chosen such that
\[
(1-t)^{\delta - 1/2}  =  (1-t_0)^{\delta - 1 /2} \wedge  \frac{c_1}{C_2 \| h \|_{\cH}},
\]
while by Lemma \ref{lem:Hdeltavee} (iii), the r.h.s. in the implication above further implies
\[
\left\| \phi_\xi \right\|_{(H_0^\delta)^\vee} \geq \frac{C_3 c_0}{2} (1-t)^{\frac{1}{2} - \delta} \geq C  \wedge \frac{C'}{\|h\|_{\cH}},
\]
for some $C,C'$ which do not depend on $h$ and $\xi$. This concludes the proof of \eqref{eq:cr}.
\end{proof}

\begin{remark} \label{rem:SC1}
Convergence results for our gradient flow can also be obtained from the results of Sussmann and Chitour \cite{Sus93,CS98,Chi06}. They work under the so-called Strong Bracket Generating condition :
\[
\forall x \in \R^n , \, \forall \theta = (\theta^1, \ldots, \theta^d) \in \R^d,\quad {\rm{span}} \{ V^i(x), [ \theta \cdot V , V^i ](x), i=1,\ldots, d \} = \R^n
\]
(which is of course weaker than ellipticity). Under this assumption, all the nonconstant paths are nonsingular (as first observed by \cite{Bis84}) and in fact, it was proven in the above mentioned works that for any compact $K$ disjoint from $\{x\}$ (the starting point) there exists $C_K >0$ such that
\[
\inf  \{ c_0(h)  \, : \, X^{x}(h) \in K, \|h \|_{H^1} \leq r \} \geq \frac{C_K}{1+r}.
\] 
This implies that, for the gradient flow with $\cH=H^1$, starting from an initial condition $h^0 \in H^1$, either $h(s)$ converges to a minimizer or there exists a subsequence $s_n \to \infty$ such that $X^x(h(s_n)) \to x$. The second possibility can be ruled out if, for instance, $g(X^x(h_0)) \leq g(x)$. (It would also be possible to replace $H^1$ by $H^k$ with $k \geq 1$, see \cite{Chi06}).  

Our result above is weaker since we need to assume ellipticity, but on the other hand we can treat arbitrary rough initial conditions. It is possible that Sussmann and Chitour's results extend to this case, but it is not obvious, since their proof crucially relies on inequalities of the form $|\int \phi(t) dh(t)| \lesssim (\sup |\phi(t)|) \|\dot{h} \|_{L^2}$.
\end{remark}

\begin{remark}
Recall from section \ref{subsec:motiv} that the $N$-point control problem, relevant in machine learning, consists, being given distinct initial data $x_1,\dots, x_N$ $ \in$ $\R^n$ and distinct targets  $y_1,\ldots, y_N$, in finding a control $h$ such that
\[ \forall i = 1,\ldots, N,  \quad X^{x_i}_1(h) = y_i. \]
This can be recast as a one-point control problem, where the state space is $\Sigma = (\R^n)^N\setminus \{ (x_i)_{i=1,\ldots, N} : \exists i \neq j, x_i=x_j \}$ (which is connected if $n \geq 2$). In this context, the ellipticity condition states that
\[
\forall \mbox{ distinct } x_1,\ldots, x_N, \quad span \left\{ \begin{pmatrix} V_i(x_1) \\ \ldots \\ V_i(x_N) \end{pmatrix} ; i=1, \ldots, d \right\} = \R^{nN}.
\]
Such vector fields can be proven to exist (but note that this requires at least $d \geq nN$).

Unfortunately, our convergence analysis cannot cover the case of quadratic loss, i.e.
\[
g(x_1, \ldots, x_N) = \sum_{i} \left| x_i - y_i \right|^2
\]
because it does not have compact sublevel sets in $\Sigma$ (and the ellipticity constants deteriorate as $|x_i - x_j| \to 0$). This means that we cannot rule out that the trajectories of the points $X^{x_i}(h(s))$ converge to a point with two identical coordinates (instead of the target $(y_i)$). (See \cite{BPV22} for a way to overcome this diffculty by embedding into a higher dimensional space).
\end{remark}
%\vspace{5mm}
%
%Note however that this very close (any higher power of $\|h\|$ would not work) which means that this method will break down in the hypoelliptic case...

%Q (maybe much later): Can we improve the above analysis ?? What about if $H^1$ is replaced by another function space ??

\subsection{Step-$2$ nilpotent case: a.s. convergence}

In this section, we obtain an almost sure convergence result for the gradient flow started at a Brownian motion. We will assume that our family of vector fields is step-$2$ nilpotent (and bracket generating), namely that
\begin{equation} \label{eq:nilp}
\forall x \in \R^n,  \forall i,j,k \in \{1,\ldots, d\}, \quad [[V_i,,V_j],V_k](x) = 0,
\end{equation}
\begin{equation} \label{eq:step2}
\forall x \in \R^n, \exists c >0, \forall \xi \in \R^n, \quad \sum_{1\leq i \leq d} \product{V_i(x)}{\xi}^2 + \sum_{1 \leq i<  j \leq d} \product{[V_i,V_j](x)}{\xi}^2 \geq c |\xi|^2
\end{equation}

Our main result is as follows.

\begin{thm} \label{thm:mainstep2}
Let $\mathcal{H} = H^{\delta}_0$ for some $\delta \in (1/2, 1]$. Assume that $g\in C^2(\R^n)$ satisfies \eqref{eq:locLoj}-\eqref{eq:CompSub}, and that the $V_i$ are $C^{\infty}_b$ and satisfy \eqref{eq:nilp}-\eqref{eq:step2}.

Let $B=(B^1,\ldots, B^d)$ be a $d$-dimensional Brownian motion. Then, almost surely, for the gradient flow $(h(s)) _{s \geq 0}$ defined in section \ref{sec:defGF} starting at $\w = \mathbf{B}(\omega)$, it holds that
\[ \exists \bar{h} \in \cH, \quad \lim_{s \to \infty} h(s) = \bar{h} \mbox{ in } \mathcal{H}, \mbox{ with } \mathcal{L}(\bar{h}) = 0. \]
\end{thm}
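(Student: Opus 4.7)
The plan is to establish, almost surely, a quantitative lower bound on the smallest eigenvalue of the Malliavin matrix of the form
\[ c_\w(h) \geq \frac{C(\omega)}{1+\|h\|_\cH} \]
and then apply the \L{}ojasiewicz criterion of Corollary \ref{cor:c1r} via Proposition \ref{prop:LojL}. Because $\cL_\w(h(s))$ is nonincreasing along the gradient flow, $X_1(\w+h(s))$ remains in the compact sublevel set $K_0 := \{g \leq g(X_1(\w))\}$ for all $s \geq 0$; on $K_0$ the local \L{}ojasiewicz hypothesis \eqref{eq:locLoj} and the step-$2$ condition \eqref{eq:step2} provide uniform constants $c_g, c_0 > 0$, so it suffices to prove the bound above for those $h$ with $X_1(\w+h) \in K_0$.

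The first key computation is to use step-$2$ nilpotency to obtain an explicit form for $\phi_\xi^i(t) = \langle J_{1\leftarrow t}V_i(X_t), \xi\rangle$. Applying the identity \eqref{eq:WBracket} (with $\ww$ replaced by $\w+h$) first to $W = V_i$ and then again to $W = [V_i, V_j]$, the nilpotency hypothesis \eqref{eq:nilp} forces the integrand $\langle J_{1\leftarrow u}[V_i, V_j](X_u), \xi\rangle$ to be constant in $u$, equal to $\langle [V_i, V_j](X_1), \xi\rangle$. Evaluating the resulting integrals gives the affine representation
\[ \phi_\xi^i(t) = a_i(\xi) + \sum_j b_{ij}(\xi)\bigl((w+h)^j_t - (w+h)^j_1\bigr), \]
with $a_i(\xi) = \langle V_i(X_1), \xi\rangle$ and $b_{ij}(\xi) = \langle [V_i, V_j](X_1), \xi\rangle$. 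The step-$2$ bracket-generating condition gives $|a(\xi)|^2 + \|\beta(\xi)\|^2 \geq c_0$ uniformly in unit $\xi$ and $X_1 \in K_0$, where $\beta(\xi) = (b_{ij}(\xi))$ is antisymmetric.

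To bound $\|\phi_\xi\|_{\cH^\vee}$ from below, one distinguishes two (nondisjoint) cases. If $|a(\xi)|^2 \geq c_0/2$, then $\phi_\xi$ remains close to its nonzero endpoint value $a(\xi)$ on a short interval $[1-\eta, 1]$ for $\eta$ suitably small (depending on $\omega$ and $\|h\|_\cH$ via the rough-path/Sobolev control of $(w+h)_{t,1}$), and Lemma \ref{lem:Hdeltavee}(iii) yields $\|\phi_\xi\|_{\cH^\vee} \geq C(\omega)/(1+\|h\|_\cH)$. If instead $\|\beta(\xi)\|^2 \geq c_0/2$, the increments of $\phi_\xi$ equal $\beta(\xi)(w+h)_{s,t}$ and therefore inherit the irregularity of Brownian motion; by Lemma \ref{lem:Hdeltavee}(ii), it suffices to prove the quantitative bound
\[ \|\beta(w+\tilde h)\|_{H^{1-\delta}} \geq \frac{C(\omega)\,\|\beta\|}{1+\|\tilde h\|_\cH}, \]
almost surely, uniformly over all antisymmetric matrices $\beta \in \R^{d \times d}$ and $\tilde h \in \cH$. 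This is the vector-valued, Sobolev-scale strengthening of Proposition \ref{prop:BH1L2}.

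The main obstacle lies in this last estimate: extracting a version of the Brownian-irregularity inequality $\|B-h\|_{L^2} \geq C(\omega)/(1+\|h\|_{H^1})$ that is simultaneously valid on the Sobolev scale $H^{1-\delta}$ and uniform in the matrix parameter $\beta$. The strategy is to project onto a one-dimensional subspace on which $\beta$ acts nontrivially, reducing to a scalar bound $\|\hat B + g\|_{H^{1-\delta}} \geq C(\omega)/(1+\|g\|_{H^\delta})$ for $\hat B$ a standard $1$-dimensional Brownian motion and $g \in H^\delta$, and then to upgrade to uniformity in the projection by combining the rotational invariance in distribution of Brownian motion with a compactness/continuity argument on the sphere of unit antisymmetric matrices. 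This is the quantitative substitute for the qualitative ``true roughness'' property used in Section \ref{sec:no-saddles}, and it is precisely the step at which the specific Hölder-$\tfrac12$ irregularity of Brownian motion (cf.\ \eqref{eq:BMirr_intro}) is essential; the approach would accordingly fail for driving signals that are any more regular.
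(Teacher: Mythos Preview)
Your overall architecture is exactly the paper's: compute $\phi_\xi$ explicitly via the step-$2$ nilpotent identity \eqref{eq:WBracket}, split into the case $|a(\xi)|$ large (handled by Lemma~\ref{lem:Hdeltavee}(iii) as in the elliptic proof) and the case $\|\beta(\xi)\|$ large (handled by a quantitative Brownian irregularity bound combined with Lemma~\ref{lem:Hdeltavee}(ii)), then conclude by Proposition~\ref{prop:LojL} and Corollary~\ref{cor:c1r}. All of this is correct and matches the paper.

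The gap is in the last paragraph, where you propose to obtain the uniform-in-$\beta$ irregularity estimate by first proving a scalar bound for $\hat B=\sum_j\lambda_j B^j$ and then ``upgrading to uniformity by rotational invariance in distribution combined with a compactness/continuity argument on the sphere''. Rotational invariance only gives that $C_\lambda(\omega):=\inf_{g}(1+\|g\|_{H^\delta})\|\hat B_\lambda+g\|_{H^{1-\delta}}$ is a.s.\ positive for each \emph{fixed} $\lambda$; the null sets depend on $\lambda$ and cannot be unioned over the sphere. Compactness would close the argument if $\lambda\mapsto C_\lambda(\omega)$ were lower semicontinuous, but as an infimum of continuous functions it is only upper semicontinuous: when you try to transfer the bound from $\lambda$ to a nearby $\lambda'$, the error $\|\hat B_\lambda-\hat B_{\lambda'}\|_{H^{1-\delta}}\lesssim|\lambda-\lambda'|$ gets multiplied by the unbounded factor $(1+\|g\|_{H^\delta})$, and there is no a priori relation between $|\lambda_n-\lambda|$ and the near-minimising $\|g_n\|_{H^\delta}$.

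The paper (Proposition~\ref{prop:BH1L2}) obtains the uniformity in $\lambda$ \emph{directly} at the level of Littlewood--Paley blocks: it defines a single event
\[
A_n=\Bigl\{\|\Delta_n(B^i)'\|_{L^2}^2\geq c\,2^{n+1}\text{ for all }i,\ \textstyle\sum_{i\neq j}|\langle\Delta_n(B^i)',\Delta_n(B^j)'\rangle|\leq\epsilon\,2^{n+1}\Bigr\},
\]
shows $\P(A_n^c)\leq e^{-C2^{n/2}}$, and on $\bigcap_{n\geq n_0}A_n$ the quadratic form $\lambda\mapsto\|\sum_i\lambda_i\Delta_n(B^i)'\|_{L^2}^2$ is bounded below uniformly on the unit sphere. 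This pathwise control of the full Gram matrix is precisely the missing ingredient that your soft compactness argument cannot supply.
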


The proof of the Theorem will rely on the following proposition, the proof of which is deferred to subsection \ref{subsec:Fourier} below.
\begin{proposition} \label{prop:BH1L2}
Let $1/2 < \delta \leq 1$ and let $B^1, \ldots, B^d$ be independent Brownian motions. Then, almost surely, there exists $C > 0$ s.t.

\begin{equation} \label{eq:BH1L2}
\forall  h  \in H^{\delta}, \quad  \inf_{c \in \R} \inf_{\sum_{i=1}^d (\lambda^i)^2 =1} \ \left\|  \sum_{i=1}^d \lambda_i B^i - h  - c\right\|_{H^{1-\delta}} \geq  C \frac{ 1 }{ 1 + \| h \|_{ H^\delta}}. 
\end{equation}
\end{proposition}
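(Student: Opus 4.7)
The plan is to pass to a Fourier expansion and exploit a dyadic frequency decomposition of the Brownian paths. First, for $\delta<1$ the seminorm $\|\cdot\|_{H^{1-\delta}}$ already kills constants so the $\inf_c$ is redundant, while for $\delta=1$ it amounts to subtracting the mean; in either case one works on the quotient $L^2([0,1])/\R$. Fix an orthonormal basis $(\phi_k)_{k\geq 1}$ of this quotient that diagonalises a suitable Laplacian on $[0,1]$, so that writing $\hat f_k=\langle f,\phi_k\rangle$ one has the norm equivalence $\|f\|_{H^s}^2\asymp \sum_{k\geq 1}k^{2s}|\hat f_k|^2$ for both $s=1-\delta$ and $s=\delta$, and the Brownian coefficients are of the form $\hat B^i_k=\sigma_k\eta^i_k$ with $\sigma_k\asymp k^{-1}$ and $(\eta^i_k)$ i.i.d.\ $\mathcal{N}(0,1)$ (essentially the Karhunen--Lo\`eve / Paley--Wiener expansion; verifying these equivalences is presumably the content of the deferred subsection on Fourier characterisations). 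For each $j\geq 0$ let $\Delta_j=\{k:2^j\leq k<2^{j+1}\}$ and $\|\pi_j f\|^2:=\sum_{k\in\Delta_j}|\hat f_k|^2$, so that one has the frequency-localisation inequalities
\[
\|f\|_{H^{1-\delta}}\geq \kappa\,2^{j(1-\delta)}\|\pi_j f\|,\qquad \|h\|_{H^\delta}\geq \kappa\,2^{j\delta}\|\pi_j h\|.
\]

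The probabilistic ingredient is a uniform-in-$\lambda$ lower bound on the dyadic Gram matrix $G^j=(\langle \pi_j B^i,\pi_j B^{i'}\rangle)_{i,i'\leq d}$. The diagonal entries $\|\pi_j B^i\|^2=\sum_{k\in\Delta_j}\sigma_k^2(\eta^i_k)^2$ concentrate around their expectation $\asymp 2^{-j}$, while off-diagonal entries have mean zero and standard deviation of order $2^{-3j/2}\ll 2^{-j}$. A $\chi^2$ lower-tail estimate combined with Borel--Cantelli then yields, almost surely, a random threshold $j_0(\omega)$ and $\kappa_1>0$ such that $G^j\succeq \kappa_1\,2^{-j}\,I_d$ for every $j\geq j_0$. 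In particular, writing $B^\lambda:=\sum_i\lambda_i B^i$, one has $\|\pi_j B^\lambda\|^2=\lambda^\top G^j\lambda\geq \kappa_1 2^{-j}$ uniformly over $\lambda\in S^{d-1}$.

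The conclusion then follows from a deterministic dichotomy at a well-chosen scale. For $j\geq j_0$, the triangle inequality $\|\pi_j(B^\lambda-h-c)\|\geq\|\pi_j B^\lambda\|-\|\pi_j h\|$ (constants being killed by $\pi_j$) yields the alternative: either $\|\pi_j h\|\leq \tfrac12\|\pi_j B^\lambda\|$, in which case frequency localisation gives $\|B^\lambda-h-c\|_{H^{1-\delta}}\geq \kappa_2\,2^{-j(2\delta-1)/2}$; or $\|\pi_j h\|>\tfrac12\|\pi_j B^\lambda\|$, in which case $\|h\|_{H^\delta}\geq \kappa_2\,2^{j(2\delta-1)/2}$. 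Choosing $j$ as the smallest integer with $\kappa_2\,2^{j(2\delta-1)/2}>1+\|h\|_{H^\delta}$ excludes the second alternative and produces the desired bound $\|B^\lambda-h-c\|_{H^{1-\delta}}\gtrsim 1/(1+\|h\|_{H^\delta})$. The remaining regime, when this $j$ would fall below $j_0(\omega)$ (so $\|h\|_{H^\delta}\leq M_0(\omega)$ for some random $M_0$), is handled by the direct estimate $\|B^\lambda-h-c\|_{H^{1-\delta}}\geq\|B^\lambda\|_{H^{1-\delta}}-\kappa_3\|h\|_{H^\delta}$ (using the continuous embedding $H^\delta\hookrightarrow H^{1-\delta}$, valid since $\delta>1-\delta$) together with $\inf_{\lambda\in S^{d-1}}\|B^\lambda\|_{H^{1-\delta}}>0$ almost surely (by continuity on the compact sphere). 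The main obstacle is precisely the uniformity in $\lambda$: the Gram-matrix estimate is preferable to a pointwise-in-$\lambda$ bound followed by an $\varepsilon$-net argument because the off-diagonal entries of $G^j$ are a factor $2^{-j/2}$ smaller than the diagonal ones, so that $G^j$ is diagonally dominant at scale $2^{-j}$ for all $j\geq j_0$.
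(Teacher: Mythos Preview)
Your approach is essentially the paper's: dyadic Littlewood--Paley blocks, concentration of the Gram data $\langle \pi_j B^i,\pi_j B^{i'}\rangle$ (diagonal $\asymp 2^{-j}$, off-diagonal $O(2^{-3j/2})$), Borel--Cantelli to get a random $j_0$, then choose the scale $j$ according to $\|h\|_{H^\delta}$. The paper packages the last step via the elementary inequality $\|a-b\|^2\geq\frac12\|a\|^2-\|b\|^2$ rather than your dichotomy, but the arithmetic is identical.

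There is, however, a small gap in your treatment of the ``small $h$'' regime. The triangle inequality
\[
\|B^\lambda-h-c\|_{H^{1-\delta}}\;\geq\;\|B^\lambda\|_{H^{1-\delta}}-\kappa_3\|h\|_{H^\delta}
\]
does \emph{not} give a positive lower bound when $\|h\|_{H^\delta}$ is below your threshold $M_0(\omega)$ but large compared to $\inf_\lambda\|B^\lambda\|_{H^{1-\delta}}$: there is no a~priori relation between these two random quantities, so the right-hand side can be negative. The fix is to avoid this detour entirely and simply choose $j$ as the smallest integer \emph{with $j\geq j_0$} satisfying $\kappa_2\,2^{j(2\delta-1)/2}>1+\|h\|_{H^\delta}$. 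Your dichotomy then applies at this $j$ (the Gram bound is available since $j\geq j_0$), the second alternative is excluded as you argue, and the first alternative gives $\|B^\lambda-h-c\|_{H^{1-\delta}}\geq\kappa_2\,2^{-j(2\delta-1)/2}$; when $j=j_0$ this is a fixed positive constant, and when $j>j_0$ the minimality of $j$ yields the $1/(1+\|h\|_{H^\delta})$ bound. This is exactly how the paper handles it (implicitly, by only using $n\geq n_0$).
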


\begin{proof}[Proof of Theorem \ref{thm:mainstep2}]
We fix $p > 2$ s.t. $\frac{1}{p} + \delta > 1$ and recall that a.s., Brownian motion can be lifted to a  geometric rough path $\w=\mathbf{B}(\omega)$ in $C_{g}^{p-var}$.

Let $(X_t, J_{1 \leftarrow t})$ be the solution to \eqref{eq:rde-h}-\eqref{eq:formula-eq-jacobian} driven by $\w+h$, and for $\xi$ in the unit sphere of $\R^n$ let $\phi_{\xi}$ in $C^{p-var}$ be defined by
\[
\phi_{\xi}(t) = \left( \left\langle J_{1 \leftarrow t} V_i (X_t),  \xi  \right\rangle \right)_{i=1,\ldots,d}.
\]
As in the proof of Theorem \ref{thm:elliptic}, it will be enough to prove
\[
\left\|  \phi_{\xi} \right\|_{ (\cH)^{\vee}} \geq  \frac{c}{1 + \| h\|_{\cH} }
\]
where the constant $c$ depends on $\w = \mathbf{B}(\omega)$ but is uniform over $\xi$ in the unit sphere and $h$ with $g(X_1(\w+h))\leq \alpha$.

Let $z = X_1(\w+h)$. In that case, it follows from \eqref{eq:WBracket} and \eqref{eq:nilp} that, for all $t \in [0,1]$,

\[ \left \langle  J_{1\to t} [V_i, V_j](X_t),   \xi \right\rangle =\left \langle [V_i, V_j](z),  \xi \right\rangle , \]
and

\[\left \langle J_{1 \to t} V_i (X_t),  \xi  \right\rangle   = \left \langle V_i (z),  \xi \right\rangle +  \sum_j \left \langle [V_i, V_j](z),   \xi \right\rangle  (w+h)^j_{t,1} \]
Letting $\lambda_i = \left \langle V_i (z),  \xi \right\rangle$ and $\mu_{ij} = \left \langle [V_i, V_j](z),   \xi \right\rangle$, we therefore have %(also using Lemma \ref{lem:Hdeltavee})
\[
 \phi_{\xi}(t)  = \left(   \lambda_i + \sum_{j} \mu_{ij}(w+h)^{j}_{t,1} \right)_{i=1,\ldots,d}.
\]
In addition, it follows from \eqref{eq:step2} and compactness of sub-level sets of $g$, that there exists a constant $c_0>0$ s.t.
\[\sum_{i} \lambda_i^2 + \sum_{ij} \mu_{ij}^2 \geq c_0. \]

We then distinguish two cases:\\
\textbf{Case 1}. We first assume that $\sum_{ij} \mu_{ij}^2 \leq \frac{c_0}{2}$, so that $\sup_i \lambda_i \geq \frac{c_0}{2d}$. This case is treated exactly as in the proof of Theorem \ref{thm:elliptic} (even simpler since we do not need rough path estimates). Namely,  choosing $t$ s.t.
\[
\| h\|_{\cH}(1-t)^{\delta-1/2} + \| w \|_{p-var,[t,1]} \leq \frac{c_0}{4d}
\]
in conjunction with Lemma \ref{lem:Hdeltavee} (iii) leads to
\[
\left\|  \phi_{\xi} \right\|_{ (\cH)^{\vee}} \geq \frac{c}{1+\|h\|_{\cH}}.
\]
\textbf{Case 2.} We now assume that $\sum_{ij} \mu_{ij}^2 \geq \frac{c_0}{2}$, so that there exists $i$ for which $\sum_j \mu_{ij}^2 \geq \frac{c_0}{2d}$. But it then follows from Proposition \ref{prop:BH1L2} that there exists a constant $C$ depending only on $\w$ such that
\[
\left\| \lambda_i + \sum_{j} \mu_{ij}(w+h)^{j}_{\cdot,1} \right\|_{H^{1-\delta}} \geq \frac{C (\sum_{j} \mu_{ij}^2)^{1/2}}{1+ \sum_{j} |\mu_{ij}| \| h^j\|_{H^\delta}} \geq \frac{c'}{1+\|h\|_{H^\delta}}
\]
which is enough to conclude using Lemma \ref{lem:Hdeltavee} (ii).
\end{proof}

\begin{remark} \label{rem:SC2} The estimate above could also be applied to obtain convergence for Sussmann and Chitour's continuation method, when the initial control is a Brownian motion. Recall that this method, given a prescribed differentiable path in $\R^n$ $(z(s))_{s\geq 0}$ with $z(0) = X_1(\w)$, consists in obtaining a curve $(h(s))_{s\geq 0}$ in control space s.t. $X_1(\w+h(s)) = z(s)$, by solving the ODE
\[
h(0)=0,\quad\frac{d}{ds} h(s) =  \left[ DX_1(\w+h(s)) \right]^{\dag} (z'(s)),
\]
where $DX_1(\w+h)^{\dag} : \R^n \to \cH$ is the Moore-Penrose pseudo-inverse of the differential $DX_1(\w+h)$. The estimates above imply that for a.e. Brownian (rough) path $\w = \mathbf{B}(\omega)$, 
\[
 \left| DX_1(\w+h)^{\dag}(\xi) \right| \leq C_{\w}(1+\| h \|_{\cH}) |\xi|,
 \]
  and the well-posedness of the ODE then follows from standard arguments (see \cite{Sus93}).\end{remark}

\subsection{Fourier computations}  \label{subsec:Fourier}

We will use the trigonometric basis $(e_k)_{k \geq 0}$ of $L^2 = L^2([0,1],\R)$ defined by
\[
e_0(t) = 1, \;\; \; e_{2m+1}(t) = \sqrt{2} \cos(2\pi m t) , \;\;\; e_{2m+2}(t) = \sqrt{2} \sin(2\pi m t), \; \; m \geq 0.
\]
For $n \geq 0$, we let $I_n = \left\{ k \;:\; 2^{n+1} - 1 \leq k < 2^{n+2}-1 \right\}$ and $\Delta_n$  the orthogonal projection on the span of $\{e_k ; k \in I_n\}$. (Note that $0$ is not in any $I_n$, and all the pairs $2m+1, 2m+2$ belong to the same $I_n$).

We will use that, for some constant $C>0$, for any function $f$ on $[0,1]$, it holds that
\begin{equation} \label{eq:bernstein}
\forall n \geq 0, \quad \frac{1}{C} 2^{-n}\left\| \Delta_n(f)' \right\|_{L^2} \leq \left\| \Delta_n(f) \right\|_{L^2}  \leq C 2^{-n} \left\| \Delta_n(f)' \right\|_{L^2}
\end{equation}
(which is immediate using $(e_{2m+1},e_{2m+2})' = 2\pi m(-e_{2m+2}, e_{2m+1})$). Also note that if $f$ is differentiable, then $\Delta_n(f)' = \Delta_n(f')$.

Finally, we will use the characterization of Sobolev spaces in terms of Fourier coefficients (see e.g. \cite{ST87}) : for any $\delta>0$, there exists $C>0$ s.t., for any $f$ in $L^2([0,1])$, 
\begin{equation} \label{eq:PLBesov}
\frac{1}{C'} \sum_{n \geq 0} 2^{2n \delta} \left\| \Delta_n(f)\right\|_{L^2}^2 \leq \| f \|_{H^{\delta}}^2 \leq C' \sum_{n \geq 0} 2^{2n \delta} \left\| \Delta_n(f)\right\|_{L^2}^2.
\end{equation}

\begin{proof}[Proof of Lemma \ref{lem:Hdeltavee} (ii)]
It is obvious from the definition that $\|\cdot\|_{(H_0^1)^{\vee}} = \|\cdot\|_{L^2}$. 

We now treat the case $\delta \in (1/2,1)$. Note that for a (say smooth) function $h$ on $[0,1]$, it holds that
$h = h'_0 e_0 + \sum_{n \geq 0} \Delta_n(h')$, with $h'_0 = \int_0^1 h $.

In addition,
\begin{align*}
\left\| f \right\|_{(H_0^\delta)^{\vee}} &= \sup_h \frac{ \left\langle f , h' \right\rangle_{L^2} }{\|h\|_{H^\delta}} \geq  \sup \frac{ \sum_{n \geq 0} \left\langle \Delta_n(f) , \Delta_n(h') \right\rangle_{L^2} }{\|h\|_{H^\delta}}\\
\end{align*}
and taking $h$ s.t. $\Delta_n(h') =  2^{n(1-\delta)}\Delta_n(f)$, the result follows from \eqref{eq:PLBesov} and \eqref{eq:bernstein}.
\end{proof}

We then proceed with the proof of Proposition \ref{prop:BH1L2}, for which we will need the following elementary lemma. 
\begin{lemma} \label{lem:DeltaB}
Let $B$ be a Brownian motion, then
\begin{equation*}
\E \left\| \Delta_n (B)' \right\|_{L^2}^2 = 2^{n+1}, \quad Var\left( \left\| \Delta_n (B)' \right\|_{L^2}^2\right) = 2^{n+1},
\end{equation*}
and if $B$, $\underline{B}$ are independent BM's
\[  
\E \left\langle \Delta^n (B)' , \Delta^n( \underline{B})'\right\rangle_{L^2} =0, \quad Var\left(\left\langle \Delta^n (B) , \Delta^n( \underline{B})'\right\rangle_{L^2} \right) = 2^{n+1}.
\]
\end{lemma}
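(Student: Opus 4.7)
The plan is to reduce everything to an elementary moment computation for a finite quadratic form in i.i.d.\ standard Gaussians. Since $(e_k)_{k\geq 0}$ is an orthonormal basis of $L^2([0,1])$, the It\^o isometry implies that the Wiener integrals $\xi_k := \int_0^1 e_k(t)\,dB_t$ form a sequence of i.i.d.\ $\mathcal{N}(0,1)$ random variables, and $\underline\xi_k := \int_0^1 e_k(t)\,d\underline B_t$ form an independent i.i.d.\ copy.

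The heart of the argument is to establish the identification
\[
\Delta_n(B)' = \sum_{k\in I_n}\xi_k\, e_k \quad \text{in } L^2([0,1]),\; \text{a.s.},
\]
and analogously for $\underline B$. Since $B$ is not classically differentiable, I would justify this by mollification: let $B^{\varepsilon} = B*\rho_\varepsilon$, which is smooth, so that the commutation identity $\Delta_n(B^\varepsilon)' = \Delta_n((B^\varepsilon)')$ recorded right after \eqref{eq:bernstein} applies. Expanding the right-hand side in the orthonormal basis gives $\sum_{k\in I_n}\bigl(\int_0^1 e_k\,(B^\varepsilon)'\,dt\bigr)e_k$, and each Fourier coefficient $\int_0^1 e_k\,(B^\varepsilon)'\,dt$ converges in $L^2(\Omega)$ to the Wiener integral $\xi_k$ as $\varepsilon\downarrow 0$. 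Passing to the limit on both sides (working in $L^2(\Omega, L^2([0,1]))$ since both $\Delta_n(\cdot)$ and $\Delta_n(\cdot)'$ are bounded operators on the finite-dimensional image of $\Delta_n$) yields the identity.

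Granted this identification, orthonormality of $(e_k)_{k\in I_n}$ in $L^2([0,1])$ gives
\[
\|\Delta_n(B)'\|_{L^2}^2 = \sum_{k\in I_n}\xi_k^2, \qquad \langle \Delta_n(B)',\Delta_n(\underline B)'\rangle_{L^2} = \sum_{k\in I_n}\xi_k\,\underline\xi_k.
\]
Since $|I_n|=2^{n+1}$, the first sum is a $\chi^2$ random variable with $|I_n|$ degrees of freedom, so $\E\|\Delta_n(B)'\|_{L^2}^2 = |I_n| = 2^{n+1}$ and $\mathrm{Var}\bigl(\|\Delta_n(B)'\|_{L^2}^2\bigr) = 2|I_n|$. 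For the cross term, independence and centring of the two Gaussian families give mean zero, while independence across both $k$ and between the families yields
\[
\mathrm{Var}\Big(\sum_{k\in I_n}\xi_k\underline\xi_k\Big) = \sum_{k,\ell\in I_n}\E(\xi_k\xi_\ell)\,\E(\underline\xi_k\underline\xi_\ell) = \sum_{k\in I_n}1 = |I_n| = 2^{n+1}.
\]

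The only non-routine ingredient is the mollification/limit step used to express $\Delta_n(B)'$ as the finite Fourier series in the $\xi_k$; once that identification is in place, the four moment identities are standard Gaussian moment computations.
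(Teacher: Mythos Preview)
Your approach is essentially identical to the paper's: both reduce to the observation that the Wiener integrals $\xi_k = \int_0^1 e_k\,dB$ are i.i.d.\ $\mathcal{N}(0,1)$, so that $\|\Delta_n(B)'\|_{L^2}^2 = \sum_{k\in I_n}\xi_k^2$ and the cross term equals $\sum_{k\in I_n}\xi_k\underline\xi_k$. Your mollification argument justifying $\Delta_n(B)' = \sum_{k\in I_n}\xi_k e_k$ is more explicit than the paper's, which simply writes $\gamma_k := B'(e_k) = \int_0^1 e_k\,dB$ without further comment; both are fine since $\Delta_n$ has finite-dimensional range.

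One point you should flag rather than leave implicit: your (correct) computation gives $\mathrm{Var}\bigl(\|\Delta_n(B)'\|_{L^2}^2\bigr) = 2|I_n| = 2^{n+2}$, since $\mathrm{Var}(\xi_k^2)=\E\xi_k^4-1=2$, not $1$. This does \emph{not} match the value $2^{n+1}$ asserted in the lemma, and the paper's own proof contains the same slip (it claims the $\gamma_k^2$ have ``expectation and variance $1$''). The discrepancy is harmless for the only application, Proposition~\ref{prop:BH1L2}, where one merely needs a variance bound of order $2^n$ to feed into Gaussian-chaos concentration; but as written your proof establishes a different constant from the one stated, and you should say so explicitly.
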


\begin{proof}
This is immediate by noting that, if $B$ is a Brownian motion, it holds that $\gamma_k := B'(e_k) = \int_0^1 e_k(s) dB_s$ , $k \geq 0$, are i.i.d. $\mathcal{N}(0,1)$. Then, we see that
 \[
\left\| \Delta_n (B)' \right\|_{L^2}^2 = \sum_{k=2^{n+1}-1}^{2^{n+2}-2} \gamma_k^2 ,\quad\quad \left\langle \Delta^n (B) , \Delta^n( \underline{B})'\right\rangle_{L^2}  = \sum_{k=2^{n+1}-1}^{2^{n+2}-2} \gamma_k \underline{\gamma}_k
\]
are a sum of $2^{n+1}$ independent r.v.'s with expectation and variance $1$ in the first case and expectation $0$ and variance $1$ in the second case.
\end{proof}

\begin{proof}[Proof of Proposition \ref{prop:BH1L2}]
Define the event
\[ A_n = \left\{  \| \Delta^n (B^i)' \|^2_{L^2} \geq c 2^{n+1} , i=1,\ldots,d,  \quad \sum_{i\neq j}\left|  \left\langle \Delta^n (B^i)' , \Delta^n( {B^j})'\right\rangle_{L^2}   \right| \leq \epsilon 2^{n+1} \right\}.\]
where $0<\epsilon<c < 1$ are fixed.

Note that, by Lemma \ref{lem:DeltaB} and the concentration properties for Gaussian chaoses, it holds that, for some $C>0$, %(TODO check exponent)
\[ \P(A_n ) \geq 1 -  e^{-C 2^{(n+1)/2}} \]
and, in particular, by the Borel-Cantelli lemma, almost surely, $A_n$ holds for $n \geq n_0(\omega)$.

We then write, for $n \geq 0$,
\begin{align*}
\left\|  \sum_{i=1}^d \lambda_i B^i - h  -c \right\|_{H^{1-\delta}}^2& \geq  C' 2^{2(n+1)(1-\delta)} \left\| \Delta_n\left( \sum_{i=1}^d \lambda_i B^i - h \right) \right\|_{L^2}^2 \\
&\geq \frac{C'}{2} 2^{2(n+1)(1-\delta)} \left\| \sum_{i=1}^d \lambda_i  \Delta_n(B^i )\right\|_{L^2}^2 - C'  2^{2(n+1)(1-\delta)} \left\| \Delta_n( h)  \right\|_{L^2}^2 \\
&\geq 2^{-2(n+1) \delta} \frac{C'}{2C} \left\| \sum_{i=1}^d \lambda_i  \Delta_n(B^i)' \right\|_{L^2}^2 - (C')^2 C 2^{2(n+1)(1-2 \delta)} \left\| h  \right\|_{H^{\delta}}^2
\end{align*}
where we have used \eqref{eq:PLBesov} and \eqref{eq:bernstein}.

Then assuming that $A_n$ holds, the first term is bounded from below by $\frac{C'(d c - \epsilon)}{2C}2^{-(n+1)(2\delta -1)}$, and in particular, this yields,
\[
\| h \|_{H^\delta}^2 \leq{ \frac{(d c - \epsilon)}{4C' C^2} }2^{-(n+1)(2\delta -1)} \Rightarrow \left\|  \sum_{i=1}^d \lambda_i B^i - h  -c \right\|_{H^{1-\delta}}^2\geq  { \frac{C'(d c - \epsilon)}{4C}} 2^{-(n+1)(2 \delta -1)}.
\]
Recalling that a.s. $A_n$ holds for $n$ large enough, this clearly implies \eqref{eq:BH1L2} for a suitable (random) $C$.

\end{proof}

\section{Continuity} \label{sec:cont}

In this section, we aim at showing that if, for a given initial condition $\mathbf{w}$, the gradient flow converges to a non-degenerate minimiser, this will also be the case for initial conditions close to $\mathbf{w}$ in rough path metric. In addition, we also want to consider the case where $\cH$ is replaced by some (for instance finite dimensional) subspace $\tilde{\cH}$. With this in mind, for $\tilde{\cH} \subset \cH \subset C^{q-var}$, we define
\begin{equation} \label{eq:defEps}
 \epsilon_{q}(\tilde{\cH},\cH) = \sup_{h \in \cH} \frac{ \| h - \tilde{\pi}(h)\|_{q-var;[0,1]}}{\|h\|_{\cH}}
\end{equation}
where $\tilde{\pi}$ is the orthogonal projection ${\cH} \to \tilde{\cH}$.

The main result is then the following.

\begin{proposition} \label{prop:continuityGF}
Let $\ww$ $\in$ $C^{p-var}_g$, $\cH \subset C^{q-var}$ for $1 \leq q \leq p$ with $\frac{1}{p} + \frac{1}{q} > 1$, $V_i$ be $C^{\infty}_b$ vector fields and $g \in C^2(\R^n)$ satisfy \eqref{eq:locLoj}. Assume that the gradient flow defined by \eqref{eq:roughgf} satisfies
\[ \lim_{s \to \infty} h(s) = h_{\infty}\mbox{ in }\cH, \mbox{ where } \cL_{\ww}(h_{\infty}) = 0 \mbox{ and } c_{\ww}(h_{\infty}) > 0. \]
Then, for any $\delta > 0$, there exists $\varepsilon > 0$ such that, if $\tilde{\ww} \in C^{p-var}$, $\tilde{\cH} \subset \cH$ are such that
\[ d_{\alpha-r.p.} (\ww, \tilde{\ww}) \leq \varepsilon, \quad \epsilon_q(\tilde{\cH},\cH) \leq \varepsilon,\]
then, letting $\tilde{h}$ the gradient flow on $\tilde{\cH}$ associated to $\tilde{\ww}$, $\tilde{\cH}$, it holds that,
\[ \exists \tilde{h}_{\infty} \in \tilde{\cH}, \quad \lim_{s \to \infty} \tilde{h}(s) = \tilde{h}_{\infty} \mbox{ in }\tilde{\cH}, \mbox{ and }  \cL_{\tilde{\ww}}(\tilde{h}_{\infty}) = 0 , \; \| h_{\infty} - \tilde{h}_{\infty} \|_{C^{q-var}} \leq \delta.\]
\end{proposition}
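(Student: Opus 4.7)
The plan is to combine a finite-time continuity estimate for the gradient flow with the local \L{}ojasiewicz criterion of Corollary \ref{cor:LocLoj}, applied once the perturbed trajectory has entered a neighbourhood of $h_{\infty}$.

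First, I would check that the \L{}ojasiewicz constant transfers from $(\ww,\cH)$ to $(\tilde{\ww},\tilde{\cH})$. By continuous dependence of the RDE solution and the Jacobian in rough-path topology, the map $(\ww',h) \mapsto c_{\ww'}(h)$ is continuous on $\cC_g^{p-var} \times C^{q-var}$, so since $c_{\ww}(h_\infty) > 0$ there exist $\rho, c_0 > 0$ with $c_{\ww'}(h) \geq 2c_0$ whenever $\rho_{p-var}(\ww',\ww) \leq \rho$ and $\|h - h_\infty\|_{q-var} \leq \rho$; on this set $X_1^x(\ww'+h)$ lies in a fixed compact set $K \subset \R^n$, on which \eqref{eq:locLoj} provides $c_K > 0$ with $\|\nabla g\| \geq c_K \sqrt{g}$. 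To pass to the restricted Hilbert space $\tilde{\cH}$, write $v_\xi = \xi \cdot_{\R^n} \nabla_{\cH} X_1$; using the dual representation \eqref{eq:FormulaGradient}, the self-adjointness of the orthogonal projection $\tilde{\pi}$, and the Young estimate
\begin{equation*}
\left| \product{v_\xi}{k - \tilde{\pi}(k)}_\cH \right| = \left| \sum_i \int_0^1 J_{1\leftarrow t} V_i(X_t) \cdot_{\R^n} \xi \, d(k - \tilde{\pi}(k))^i_t \right| \leq C_{\ww,K}\, \epsilon_q(\tilde{\cH},\cH)\, \|k\|_\cH
\end{equation*}
(valid since $\|J_{1\leftarrow \cdot} V_i(X_\cdot)\|_{p-var}$ is bounded on the ball and $\frac{1}{p} + \frac{1}{q} > 1$), one obtains $\|\tilde{\pi}(v_\xi)\|_\cH \geq \|v_\xi\|_\cH - C_{\ww,K}\, \epsilon_q(\tilde{\cH},\cH)$. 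Taking the infimum over unit $\xi$ and choosing $\varepsilon$ small yields $c_{\tilde{\ww},\tilde{\cH}}(h) \geq c_0$ on the same ball, and hence, via Proposition \ref{prop:LojL}, a local \L{}ojasiewicz inequality $\|\nabla_{\tilde{\cH}} \cL_{\tilde{\ww}}(h)\|_{\tilde{\cH}} \geq c_0 c_K \sqrt{\cL_{\tilde{\ww}}(h)}$.

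Next, I would propagate closeness of the two gradient flows over a long but finite horizon. Given $\delta$, choose $S$ large enough that $\|h(S) - h_\infty\|_\cH$ is much smaller than $\rho$ and $2\sqrt{\cL_{\ww}(h(S))}/(c_0 c_K) < \rho/3$; this is possible since $h(s) \to h_\infty$ and $\cL_{\ww}(h(s)) \to 0$. On $[0,S]$ both gradient flows are Cauchy--Lipschitz ODEs whose right-hand sides are locally Lipschitz on bounded sets of $\cH$ and continuous in $\ww$ (by the rough path estimates recalled in subsection \ref{subsec:rp}), while the transition from $\cH$ to $\tilde{\cH}$ amounts to composing with $\tilde{\pi}$. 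A Gronwall argument then yields $\sup_{s \in [0,S]} \|\tilde{h}(s) - h(s)\|_\cH \to 0$ as $\rho_{p-var}(\ww,\tilde{\ww}) + \epsilon_q(\tilde{\cH},\cH) \to 0$, so for $\varepsilon$ small $\tilde{h}(S)$ lies deep inside the \L{}ojasiewicz ball around $h_\infty$ and $\cL_{\tilde{\ww}}(\tilde{h}(S))$ is comparable to $\cL_{\ww}(h(S))$. Applying Corollary \ref{cor:LocLoj} to $\tilde{h}$ restarted at time $S$ then gives $\tilde{h}(s) \to \tilde{h}_\infty$ in $\tilde{\cH}$ with $\cL_{\tilde{\ww}}(\tilde{h}_\infty) = 0$, and the continuous embedding $\cH \subset C^{q-var}$ together with a triangle inequality forces $\|\tilde{h}_\infty - h_\infty\|_{q-var} \leq \delta$.

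The main obstacle is the transfer of the Malliavin non-degeneracy constant from $\cH$ to the projected subspace $\tilde{\cH}$. This is exactly where the choice of the $q$-variation topology in the definition \eqref{eq:defEps} of $\epsilon_q(\tilde{\cH},\cH)$ is essential: it is the regularity needed to pair against the $p$-variation integrand $J_{1 \leftarrow \cdot} V_i(X_\cdot) \cdot \xi$ via a Young integral, so that the projection error depends linearly on $\epsilon_q$. The rest is a fairly standard continuous-dependence argument on a bounded time interval combined with the local \L{}ojasiewicz criterion already at hand.
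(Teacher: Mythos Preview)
Your approach is essentially the same as the paper's: both proofs combine a finite-horizon Gronwall estimate for the two gradient flows with the local \L{}ojasiewicz criterion (Corollary \ref{cor:LocLoj}), after transferring the Malliavin non-degeneracy constant from $(\ww,\cH)$ to $(\tilde{\ww},\tilde{\cH})$ via a Young estimate against the projection error $\epsilon_q(\tilde{\cH},\cH)$.

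One point to tighten: your Gronwall claim $\sup_{s\in[0,S]}\|\tilde{h}(s)-h(s)\|_\cH \to 0$ is stated in the wrong norm. The term coming from the change of Hilbert space is $(I-\tilde{\pi})\nabla_\cH\cL_{\tilde{\ww}}(\tilde{h}(u))$, and $\|(I-\tilde{\pi})k\|_\cH$ need not be small as $\epsilon_q\to 0$ (it can equal $\|k\|_\cH$). What \emph{is} small, by the very definition \eqref{eq:defEps}, is $\|(I-\tilde{\pi})k\|_{q\text{-var}}\leq \epsilon_q\|k\|_\cH$. The paper accordingly runs the Gronwall argument in $C^{q\text{-var}}$, using that $h\mapsto\nabla_\cH\cL_{\ww}(h)$ is locally Lipschitz from $C^{q\text{-var}}$ into $\cH$ (hence into $C^{q\text{-var}}$). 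Since your \L{}ojasiewicz ball around $h_\infty$ is already defined in $q$-variation and the final bound $\|h_\infty-\tilde{h}_\infty\|_{q\text{-var}}\leq\delta$ is what is needed, replacing $\|\cdot\|_\cH$ by $\|\cdot\|_{q\text{-var}}$ in the finite-time step fixes the argument without changing its structure.
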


\begin{proof}
We first estimate the difference between $h$ and $\tilde{h}$ on a finite time interval $[0,S]$. Let
\[ R :=  \max \left(\left\| \ww \right\|_{p-var} , \left\| \tilde{\ww} \right\|_{p-var} , \sup_{s \geq 0} \left\| h(s)\right\|_{\cH} \right)\]
which is finite by assumption. We then write, for any $s \geq 0$,
\modif{\begin{align*}
h(s) - \tilde{h}(s) = &-\int_0^s \nabla_{\cH} \cL_{\ww} (h(u)) du +  \int_0^s \nabla_{\tilde{\cH}} \cL_{\tilde{\ww}} (\tilde{h}(u)) du \\
=& \int_0^s \left(\nabla_{\cH} \cL_{\tilde{\ww}} (h(u)) - \nabla_{\cH} \cL_{\ww} (h(u))  \right) du \\
& + \; \int_0^s \left( \nabla_{\cH} \cL_{\tilde{\ww}} (\tilde{h}(u)) - \nabla_{\cH} \cL_{\tilde{\ww}} (h(u))  \right) du \\
&+ \;   \int_0^s \left( \nabla_{\tilde{\cH}} \cL_{\tilde{\ww}} (\tilde{h}(u)) - \nabla_{\cH} \cL_{\tilde{\ww}} (\tilde{h}(u))  \right) du . 
\end{align*}}
We then bound these three terms in $q$-variation, using the continuity properties of $\nabla_\cH \cL$ w.r.t. its arguments to obtain that
\[ 
\left\|h(s) - \tilde{h}(s) \right\|_{q-var} \leq C_1 s d_{p-r.p.} (\ww,\tilde{\ww}) + C_1 \int_0^s \left\| h(u) - \tilde{h}(u) \right\|_{q-var} du + C_1 s  \epsilon_q(\tilde{\cH},\cH),
\]
assuming we have an a priori bound on $\tilde{h}$, for instance
\[ \sup_{0 \leq u \leq s} \left\| \tilde{h}(u) \right\|_{q-var} \leq R+1, \]
\modif{where the constant $C_1$ depends on $R$.}  By Gronwall's lemma, this yields
\begin{equation} \label{eq:estDiffS}
\left\|h(S) - \tilde{h}(S) \right\|_{q-var}  \leq C_1 S e^{C_1 S} \left( d_{p-r.p.} (\ww,\tilde{\ww}) +  \epsilon_q(\tilde{\cH},\cH) \right)
\end{equation}
for every $S>0$ (as long as the r.h.s. is smaller than $1$).

\modif{Our goal for the following is to deduce the convergence from Corollary \ref{cor:LocLoj}.} By assumption on $h_{\infty}$, and continuity of $c$ w.r.t. its parameters, we can find $0 < r \leq \modif{\frac{\delta}{2}} $ \modif{($r<1$)} s.t.
\[
c^{2r}_{\ww}(h_{\infty}) := \inf \left\{ c_{\tilde{\ww}}(h), \;\;  d_{p-r.p.}(\ww,\tilde{\ww}) + \| h - h_{\infty} \|_{q-var} \leq 2 r \right\} > 0.
\]
\modif{
Let $K \subset \R^n$ be a compact set containing $\{X_1^x(\tilde{\w}+h), \, d_{p-r.p.} (\ww,\tilde{\ww}) + \|h\|_{\tilde{\cH}} \leq 1\}$, and $c_{g,K}$ the \L{}ojasiewicz constant for $g$ on $K$. We now fix $S$ such that
\begin{equation*}
\left\|h(S) -  h_{\infty} \right\|_{q-var}  \leq \frac{r}{2}.
\end{equation*}
Additionally, from the continuity of $\cL$, we can assume that
\begin{equation}\label{eq:bound-in-r-for-L}
\cL_{\ww}(h(S))\leq\frac{1}{2}\left(\frac{r}{4}c_{g,K}c^{2r}_{\ww}(h_{\infty})\right)^2,
\end{equation}}
\modif{where the quantity on the r.h.s. is chosen in such a way to make Corollary \ref{cor:LocLoj} applicable later.}

For arbitrary $\w \in C^{p-var}_g, h \in C^{q-var}$ we can define $\tilde{c}_{\w}(h)$ analogously to \eqref{eq:defc}-\eqref{eq:FormulaGradient} by
\[
\tilde{c}_{\w}(h)^2 := \inf_{|\xi|_{\R^n}=1}   \left\| \xi \cdot_{\R^n} \nabla_{\tilde{\cH}} X_1 \right\|_{\cH}^2 = \inf_{|\xi|_{\R^n}=1}  \left\|   t \mapsto \left( \langle J_{1 \leftarrow t} V_i(X_t)  \cdot \xi \right)_{i=1,\ldots,d} \right\|_{\tilde{\cH}^{\vee}}^2.
\]
Note that for any path $f \in C^{p-var}([0,1],\R^d)$, it holds that
\begin{align*}
\|f \|_{\cH^\vee} &= \sup_{\|h\|_{\cH} \leq 1} \int_0^1 f(t) dh(t) \\
& \leq   \sup_{\|h\|_{\tilde{\cH}} \leq 1} \int_0^1 f(t) dh(t) + \sup_{\|h\|_{\cH} \leq 1} \int_0^1 f(t) d \left(h(t) - \tilde{\pi}(h)(t) \right) \\
&\leq \|f \|_{\tilde{\cH}^\vee}  + C_{p,q} \| f \|_{p-var}  \epsilon_q(\tilde{\cH},\cH),
\end{align*}
by Young integration.

It follows that for any $\ww$, $h$ with   $\|\ww\|_{p}$ and $\|h\|_{q-var}$ smaller than $R+1$, it holds that
\begin{equation} \label{eq:tildeC}
 c_{\ww}(h) \leq   \tilde{c}_{\ww}(h) + C_2   \epsilon_q(\tilde{\cH},\cH),
\end{equation}
where $C_2>0$ depends on $R$.

We then fix $\varepsilon > 0$ s.t.
\begin{equation*}
2 C_1 S e^{C_1 S} \varepsilon \leq \frac{r}{2},\quad
\end{equation*}
and note that we then have, by \eqref{eq:estDiffS},
\begin{equation*}
\left\|h(S) -  \tilde{h}(S) \right\|_{q-var}  \leq \frac{r}{2}.
\end{equation*}

We also assume that 
\[ C_2 \varepsilon \leq \frac{1}{2} c^{2r}_{\ww}(h_{\infty}), \quad \varepsilon \leq \modif{\frac{r}{2}}, \]
from which we deduce by \eqref{eq:tildeC} that
\[ \tilde{c}^r_{\tilde{\ww}}(\tilde{h}(S)) := \inf_{\| h - \tilde{h}(S) \| \leq r} \tilde{c}_{\tilde{\ww}}(h) \geq  \frac{1}{2} c^{2r}_{\ww}(h_{\infty}).\]

Assuming additionally that $\varepsilon>0$ is s.t.
\begin{equation*}
d_{p-r.p.} (\ww,\tilde{\ww}) ,  \epsilon_q(\tilde{\cH},\cH) \leq \varepsilon \; \Rightarrow \; 2  \sqrt{\cL_{\tilde{\ww}}(\tilde{h}(S)) } \leq  \frac{r}{2} c_{g,K} c^{2r}_{\ww}(h_{\infty})
\end{equation*}
(which is always true for $\varepsilon$ small enough by continuity of $\cL$ \modif{and \eqref{eq:bound-in-r-for-L}}), we can combine the above inequalities with Proposition \ref{prop:LojL} and the convergence criterion in Corollary \ref{cor:LocLoj} to obtain that
\[
\lim_{s \to \infty} \tilde{h}(s) = \tilde{h}_{\infty}, \mbox{ with } \cL_{\tilde{\ww}}(\tilde{h}_{\infty}) = 0 \mbox{ and } \| \tilde{h}_{\infty} - \tilde{h}(S) \|_{\tilde{\cH}} \leq r.
\]
It only remains to obtain the bound $\|h_{\infty} - \tilde{h}_{\infty} \|_{q-var}$, which follows immediately from the triangle inequality.
\end{proof}

In order to apply the above result, we need assumptions guaranteeing that $\epsilon_q(\tilde{\cH},\cH)$ defined in \eqref{eq:defEps} is small. We first have a general compactness criterion guaranteeing convergence to $0$.

\begin{lemma}
Let $\cH$ be compactly embedded in $C^{q-var}$, and let $\cH_n$, $n\geq 0$, be a sequence of subspaces of $\cH$, with $\cH_n \subset \cH_{n+1}$ and $\cup_n {\cH}_n = \cH$. Then 
\[ \lim_{n \to \infty} \epsilon_q\left(\cH_n, \cH\right) = 0.\]
\end{lemma}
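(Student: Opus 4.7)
I would argue by contradiction. Suppose $\epsilon_q(\cH_n, \cH) \not\to 0$. Then there exist $\varepsilon > 0$, a subsequence $n_k \uparrow \infty$, and elements $h_k \in \cH$ with $\|h_k\|_{\cH} = 1$ and
\[
\|h_k - \pi_{n_k} h_k\|_{q-var;[0,1]} \geq \varepsilon,
\]
where $\pi_n$ denotes the orthogonal projection of $\cH$ onto $\cH_n$. Since the embedding $\cH \hookrightarrow C^{q-var}$ is compact and $\cH$ is a Hilbert space (hence reflexive), I can pass to a further subsequence so that simultaneously $h_k \to h^*$ strongly in $C^{q-var}$ and $h_k \rightharpoonup h^*$ weakly in $\cH$ (the two limits agree because the continuous embedding sends weak $\cH$-limits to weak $C^{q-var}$-limits).

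The key is the decomposition
\[
h_k - \pi_{n_k} h_k = (h_k - h^*) + (h^* - \pi_{n_k} h^*) - \pi_{n_k}(h_k - h^*).
\]
The first summand tends to $0$ in $C^{q-var}$ by construction. The second tends to $0$ in $\cH$ (hence in $C^{q-var}$ by continuity of the embedding) because the density $\overline{\cup_n \cH_n} = \cH$, combined with the fact that the $\cH_n$ are increasing, forces $\pi_n x \to x$ in $\cH$ for every fixed $x \in \cH$.

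The main obstacle is the third term, because $\pi_{n_k}$ is a contraction in $\cH$-norm but carries no a priori $C^{q-var}$-norm bound, so compactness of the embedding is essential. First, I claim $\pi_{n_k}(h_k - h^*) \rightharpoonup 0$ weakly in $\cH$: for any $g \in \cH$, self-adjointness of the projection gives
\[
\langle \pi_{n_k}(h_k - h^*), g\rangle_{\cH} = \langle h_k - h^*, g\rangle_{\cH} + \langle h_k - h^*, \pi_{n_k} g - g\rangle_{\cH},
\]
where the first term tends to $0$ by weak convergence and the second is bounded in absolute value by $2\,\|\pi_{n_k} g - g\|_{\cH} \to 0$. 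Second, since $\|\pi_{n_k}(h_k - h^*)\|_{\cH} \leq 2$, the sequence is relatively compact in $C^{q-var}$; any cluster point must coincide with its weak $\cH$-limit, namely $0$, so a sub-subsequence argument yields $\pi_{n_k}(h_k - h^*) \to 0$ in $C^{q-var}$.

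Putting the three pieces together gives $\|h_k - \pi_{n_k} h_k\|_{q-var} \to 0$ along the subsequence, contradicting the lower bound $\geq \varepsilon$ and concluding the proof.
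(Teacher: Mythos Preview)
Your proof is correct. The paper's argument is the same contradiction-and-compactness strategy but with one simplifying shortcut: instead of working with arbitrary near-maximisers $h_k$ and decomposing $h_k - \pi_{n_k} h_k$ into three pieces, the paper observes at the outset that the supremum defining $\epsilon_q(\cH_n,\cH)$ is attained on $\cH_n^\perp$ (since $h - \pi_n h \in \cH_n^\perp$ and $\|h - \pi_n h\|_{\cH} \leq \|h\|_{\cH}$). It therefore directly picks $h_n \in \cH_n^\perp$ with $\|h_n\|_{\cH} \leq 1$ and $\|h_n\|_{q-var} \geq \varepsilon$; the increasing-density assumption then makes $h_n \rightharpoonup 0$ in $\cH$ immediate, and compactness of the embedding gives $\|h_n\|_{q-var} \to 0$ along a subsequence, a contradiction.

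Your three-term decomposition recovers exactly this, but at the cost of having to separately handle $\pi_{n_k}(h_k - h^*)$ via a second compactness-plus-weak-limit argument. The paper's route is shorter; yours has the minor advantage that it transparently works under the weaker hypothesis $\overline{\cup_n \cH_n} = \cH$ (which you in fact invoke), though the paper's argument does too once one notes density is all that is needed for $h_n \in \cH_n^\perp$, $\|h_n\|\leq 1$ to force $h_n \rightharpoonup 0$.
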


\begin{proof}
Assume by contradiction, that $\limsup_n \epsilon_q\left(\cH_n, \cH\right)  > 0$, and by taking a subsequence, we can in fact assume that $\lim_n \epsilon_q\left(\cH_n, \cH\right) = \epsilon > 0$. We can then find a sequence $h_n$ such that 
\[
\|h_n\|_{\cH} \leq 1,  \;\; h_n \in \cH_n^{\perp}, \;\; \liminf_n \| h_n \|_{q-var} \geq \varepsilon > 0.
\]
Taking again a subsequence if necessary, by the compact embedding of $\cH$ in $C^{q-var}$, there exists $h \in \cH$ s.t. $h_n \to h$ in $C^{q-var}$. On the other hand, the fact that $h_n \in \cH_n^{\perp}$, along with the assumption on $\cH_n$, implies that $h_n$ converges weakly to $0$ in $\cH$. This implies that $h=0$, i.e. $\| h_n\|_{q-var} \to 0$, which is a contradiction. 
\end{proof}

We can also compute directly upper bounds in many cases of interests. For instance, we have the following on piecewise linear approximations when $\cH=H^1$ (extensions to other Besov spaces would be similar).

\begin{lemma} \label{lem:epsH}
Let $\cH = H^1_0$, and let $\cH_n$ be the subspace of $\cH$ consisting of functions which are piecewise linear on each interval of the form $[i/n, (i+1)/n]$, $i=0, \ldots, n-1$. Then, for each $1 < q < 2$, it holds that
\[
\epsilon_q\left(\cH_n \right) \leq C_q n^{\frac{1}{q}-1}.
\]
\end{lemma}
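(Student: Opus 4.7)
The plan is to identify the projection $\tilde{\pi}:\cH\to\cH_n$ explicitly and then estimate the $q$-variation of $g := h - \tilde{\pi}(h)$ by decomposing arbitrary partitions into increments that stay inside a single grid cell $I_i := [i/n,(i+1)/n]$ and increments that cross grid nodes. Writing out the orthogonality condition $\int_0^1 \dot g\,\dot k = 0$ for all $k\in\cH_n$ (whose derivatives are piecewise constant on the $I_i$) forces $\int_{I_i}\dot g = 0$ for every $i$; combined with $g(0)=0$ this yields $g(i/n)=0$ at every grid point, i.e.\ $\tilde{\pi}(h)$ is the piecewise linear interpolant of $h$ at the nodes. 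Orthogonality also gives the free bound $\|\dot g\|_{L^2} \le \|h\|_\cH$.

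On each cell $I_i$, Cauchy--Schwarz together with $g(i/n)=0$ yields $\|g\|_{L^\infty(I_i)} \le n^{-1/2}\|\dot g\|_{L^2(I_i)}$, and since the $q$-variation is dominated by the $1$-variation for $q\ge 1$, also $\|g\|_{q-var;I_i} \le \int_{I_i}|\dot g| \le n^{-1/2}\|\dot g\|_{L^2(I_i)}$. For the global estimate, I would fix an arbitrary partition $\pi = \{0 = t_0 < \cdots < t_m = 1\}$ and write $i_k$ for the index with $t_k \in I_{i_k}$; this sequence is non-decreasing. Splitting $\sum_k |g_{t_k,t_{k+1}}|^q$ into \emph{internal} increments (those with $i_k = i_{k+1}$) and \emph{crossing} increments ($i_k < i_{k+1}$), the internal part is controlled by $\sum_i \|g\|_{q-var;I_i}^q$, while for a crossing increment the vanishing of $g$ at all nodes strictly between $t_k$ and $t_{k+1}$ gives the useful estimate
\[ |g_{t_k,t_{k+1}}|^q \le 2^{q-1}\bigl(|g(t_k)|^q + |g(t_{k+1})|^q\bigr). \]

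The combinatorial heart of the argument is that monotonicity of $(i_k)$ ensures each cell $I_i$ contributes at most two ``endpoints of a crossing'' (one as a left endpoint, one as a right endpoint of a crossing increment), so the crossing contributions add up to at most $2^q \sum_i \|g\|_{L^\infty(I_i)}^q \le 2^q n^{-q/2}\sum_i \|\dot g\|_{L^2(I_i)}^q$. Finally, for $q\le 2$, H\"older's inequality with conjugate exponents $(2/q,\, 2/(2-q))$ yields $\sum_{i=0}^{n-1} \|\dot g\|_{L^2(I_i)}^q \le n^{1-q/2}\|\dot g\|_{L^2}^q$. Combining everything produces $\|g\|_{q-var}^q \le C_q\, n^{1-q}\|h\|_\cH^q$, which is exactly the claimed $\epsilon_q(\cH_n,\cH) \le C_q n^{1/q-1}$. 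The only genuinely delicate step is the crossing-vs-internal bookkeeping; the naive bound $\|g\|_{q-var;[0,1]}^q \lesssim \sum_i \|g\|_{q-var;I_i}^q$ is false in general, and it is precisely the monotonicity of $(i_k)$ that saves us. The rest reduces to routine interpolation between $L^\infty$ and $1$-variation bounds on each cell.
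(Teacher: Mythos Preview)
Your proof is correct and follows essentially the same route as the paper's: identify $\tilde\pi(h)$ as the nodal piecewise-linear interpolant, exploit that $g=h-\tilde\pi(h)$ vanishes at the grid points, bound cell-wise via Cauchy--Schwarz, and finish with H\"older. The one organisational difference is that the paper avoids your internal/crossing split: since $g$ vanishes at the nodes, any crossing increment satisfies $g_{s_j,s_{j+1}} = g_{s_j,t_i} + g_{t_{i+k},s_{j+1}}$, which shows directly that $\|g\|_{q\text{-}var;[0,1]}^q \le C_q \sum_i \|g\|_{q\text{-}var;I_i}^q$ --- so the ``naive bound'' you flagged as false in general does hold here and slightly streamlines the bookkeeping.
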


\begin{proof}
We fix $n$ and note that the orthogonal projection $\pi_n(f)$ of $f \in  \cH$ is simply the piecewise linear approximation of $f$ which coincides with $f$ on each grid-point $t_i = \frac{i}{n}$. It follows that $\cH_n^\perp = \{ f \in H^1_0, f_{t_i} = 0, i = 1,\ldots, n\}$. We now fix $h$ in $\cH_n^\perp$ . For $(s_j)$ an arbitrary partition of $[0,1]$,  note that if $s_j \leq t_i \leq t_{i+k} \leq s_{j+1}$, then
\[ 
\left| h_{s_j,s_{j+1}} \right|^q  = \left| h_{s_j,t_i}  + h_{t_{i+k}, s_{j+1}}\right|^q \leq C_q \left( \left| h_{s_j,t_i} \right|^q   + \left| h_{t_{i+k}, s_{j+1}}\right|^q \right)
\]
Letting $\cP$ be the set of all partitions of $[0,1]$, and $\cP_{t}$ be the subset of those which contain $\{t_i, i=1,\ldots, n\}$, this yields
\begin{align*}
  \| h\|_{q-var;[0,1]}^q =  \sup_{(s_j) \in \cP} \sum_j \left| h_{s_j,s_{j+1}} \right|^q  &\leq  C_q \sup_{(s_j) \in \cP_t}  \left| h_{s_j,s_{j+1}} \right|^q = C_q \sum_{i=0}^{n-1}  \| h\|_{q-var;[t_i,t_{i+1}]}^q.
\end{align*}

On the other hand, for any $t_i\leq t_{i+1}$, it holds by Cauchy-Schwarz, that
\[
 \| h\|_{q-var;[t_i,t_{i+1}]} \leq  \| h\|_{1-var;[t_i,t_{i+1}]}  \leq n^{-1/2} \left( \int_{t_i}^{t_{i+1}} |\dot{h}|^2 \right)^{1/2} .
\]

We therefore obtain (using H\"older's inequality in the sum below) that, for every $h \in \cH_n^\perp$,
\begin{align*}
\| h \|_{q-var}^q  &\leq C n^{-q/2} \sum_i \left( \int_{t_i}^{t_{i+1}} |\dot{h}|^2 \right)^{q/2} \\
& \leq C n^{- q/2} n^{1-q/2}  \left( \sum_i\int_{t_i}^{t_{i+1}} |\dot{h}|^2 \right)^{q/2} \\
& = C n^{1-q} \| h \|_{H^1}^q.
\end{align*}
By definition of $\epsilon_q$, this proves the claim. %(TODO Double check)
\end{proof}

In particular, we can then use our continuity results to obtain convergence for gradient flow on piecewise linear controls, if we know convergence for a continuous problem, such as in the step-$2$ nilpotent case.

\begin{corollary}
For $n \geq 1$, let $\cH_n \subset H^1_0$ be the subset consisting of functions which are piecewise linear on each interval of the form $[i/n, (i+1)/n]$, $i=0, \ldots, n-1$. Let $h^{n,0}$ be a $\cH_n$-valued random variable s.t. the $\dot{h}^j_{[i/n, (i+1)/n]}$ are i.i.d. centered Gaussians with variance $\frac{1}{n}$.

Assume that $g\in C^2(\R^n)$ satisfies \eqref{eq:locLoj}-\eqref{eq:CompSub}, and that the $V_i$ are $C^{\infty}_b$ and satisfy \eqref{eq:nilp}-\eqref{eq:step2}, and let $\cL : \cH_n \to \R $ defined by $\cL(h) = g(X_1^x(h))$. Let $(h^{n}(s))_{s \geq 0}$ be defined by
\[
h^n(0)= h^{n,0}, \quad \frac{d}{ds} h^n(s) = - \nabla_{\cH_n} \cL(h^n(s)).
\]
Then it holds that
\[
\lim_{n \to \infty} \P\left( \exists h_{\infty} \in \cH_n, \; \lim_{s \to \infty} h^n(s) = h_{\infty}, \;\;\cL(h_{\infty})= 0 \right) = 1.
\]
\end{corollary}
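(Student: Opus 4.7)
The plan is to derive this corollary from the almost-sure convergence for Brownian initial conditions (Theorem \ref{thm:mainstep2}) via the stability result Proposition \ref{prop:continuityGF}, after recasting the discrete flow (which starts at $h^{n,0}\neq 0$) as a flow started at $0$ driven by a perturbed rough drift. I would begin by fixing $p > 2$ slightly larger than $2$ so that Brownian motion almost surely lifts to $\cC_g^{p-var}$, and pick $q \in (1, p/(p-1))$; then $\cH_n \subset H^1_0 \subset C^{1-var} \subset C^{q-var}$ and $1/p + 1/q > 1$ secure the complementary Young regularity required by Proposition \ref{prop:continuityGF}. Lemma \ref{lem:epsH} then provides the deterministic bound $\epsilon_q(\cH_n, H^1_0) \leq C_q n^{1/q - 1} \to 0$.

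On the probabilistic side, I would couple $h^{n,0}$ with a $d$-dimensional Brownian motion $B$ on a common probability space by matching the Gaussian increments of $h^{n,0}$ to those of a piecewise-linear interpolation of $B$ on the partition $\{i/n\}_{i=0}^n$. Classical rough-path Donsker-type results (see \cite[Chapter 13]{FV10}) then yield that the lift $\w_n := S(h^{n,0})$ converges in probability to the Stratonovich Brownian rough path $\mathbf{B}$ in $\cC_g^{p-var}$. Independently, Theorem \ref{thm:mainstep2} applied with $\delta = 1$ gives, on a full-probability event, convergence of the continuous gradient flow for $\cL_{\mathbf{B}}$ in $H^1_0$ to some $h_\infty \in H^1_0$ with $\cL_{\mathbf{B}}(h_\infty) = 0$. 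The proof of that theorem moreover establishes the quantitative bound $c_{\mathbf{B}}(h) \geq c(\mathbf{B})/(1+\|h\|_{H^1_0})$ on sublevel sets of $\cL_{\mathbf{B}}$, so in particular $c_{\mathbf{B}}(h_\infty) > 0$, which is exactly the non-degeneracy hypothesis of Proposition \ref{prop:continuityGF}.

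With the continuous limit secured, I would recast the discrete problem by setting $v^n(s) := h^n(s) - h^{n,0}$: since $h^{n,0}$ is piecewise linear (hence of finite $1$-variation), Young translation gives
\[
g(X_1^x(h^{n,0} + v)) = g(X_1^x(\w_n + v)) =: \cL_{\w_n}(v),
\]
so $v^n$ satisfies $v^n(0) = 0$ and $\dot v^n(s) = -\nabla_{\cH_n} \cL_{\w_n}(v^n(s))$, which is exactly the setting of Proposition \ref{prop:continuityGF} with $\tilde \w = \w_n$ and $\tilde \cH = \cH_n$. For any tolerance $\delta > 0$, the proposition produces $\varepsilon > 0$ such that on the intersection of $\{\rho_{p-var}(\mathbf{B},\w_n) \leq \varepsilon\}$, $\{\epsilon_q(\cH_n, H^1_0) \leq \varepsilon\}$, and the full-probability event from Theorem \ref{thm:mainstep2}, the flow $v^n(s)$ converges in $\cH_n$ to some $v^n_\infty$ with $\cL_{\w_n}(v^n_\infty) = 0$. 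Undoing the translation yields $h^n(s) \to h^{n,0} + v^n_\infty \in \cH_n$ with $\cL(h^{n,0} + v^n_\infty) = 0$. The probability of this event tends to $1$: the first factor by rough-path Donsker, the second deterministically by Lemma \ref{lem:epsH}, and the third by Theorem \ref{thm:mainstep2}.

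The main obstacle I expect is the rough-path Donsker convergence $\w_n \to \mathbf{B}$: the scaling convention defining $h^{n,0}$ must be interpreted (or reconciled with Corollary \ref{cor:discrete_intro}) so that $h^{n,0}$ is, up to a harmless rescaling, the piecewise-linear interpolation of Brownian motion on $\{i/n\}$, for which the convergence of the lift in $p$-variation is classical. Once this identification is in place, the rest is a structural application of Proposition \ref{prop:continuityGF}: the Polyak-\L{}ojasiewicz-type lower bound from the proof of Theorem \ref{thm:mainstep2} yields the non-degeneracy $c_{\mathbf{B}}(h_\infty) > 0$, which is the only non-trivial hypothesis to verify.
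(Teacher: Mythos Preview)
Your proposal is correct and follows essentially the same route as the paper: couple $h^{n,0}$ with the piecewise-linear interpolation $B^n$ of a Brownian motion, invoke rough-path Donsker convergence $\mathbf{B}^n \to \mathbf{B}$ in $\cC_g^{p-var}$, translate the discrete flow by $h^{n,0}$ to obtain a flow $v^n$ started at $0$ driven by $\mathbf{B}^n$, and then combine Theorem~\ref{thm:mainstep2}, Lemma~\ref{lem:epsH}, and Proposition~\ref{prop:continuityGF}. Your explicit verification that $c_{\mathbf{B}}(h_\infty) > 0$ (extracted from the quantitative lower bound in the proof of Theorem~\ref{thm:mainstep2}) is a detail the paper leaves implicit, and your flag on the scaling convention is well-taken: the statement's variance $1/n$ for the derivative does not literally match the piecewise-linear Brownian interpolation (whose derivative has variance $n$), so the identification $h^{n,0} \stackrel{d}{=} B^n$ used in the paper's proof requires reading the statement charitably.
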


\begin{proof}
Let $B$ be a Brownian motion and $B^n$ its piecewise linear (on intervals $[i/n,(i+1)/n]$) approximation. For fixed $p>2$, letting further $\mathbf{B}$ be the rough path lift of $B$ and $\mathbf{B}^n$ that of $B^n$, it holds from classical rough path results (e.g. \cite[Corollary 13.21]{FV10}) that,
\begin{equation} \label{eq:pwrp}
 \lim_{ n \to \infty} \rho_{p-var}\left(\mathbf{B}, \mathbf{B}^n\right) = 0 \mbox{  in probability}.
\end{equation}
Let $v^n$ be the solution to the gradient flow equation 
\[
v(0)=0, \quad \dot{v}(s) =  - \nabla_{\cH_n} \cL_{\mathbf{B}^n} (v(s)).
\]
By Theorem \ref{thm:mainstep2}, Proposition \ref{prop:continuityGF} and Lemma \ref{lem:epsH}, \eqref{eq:pwrp} implies that
\[
\lim_{n \to \infty} \P \left( \lim_{s \to \infty} v^n(s) = v^n_{\infty} \in \cH_n \mbox{ with } \cL_{\mathbf{B}^n}(v_{\infty}^n)= 0 \right)=1.
\]
On the other hand, since $B^n$ is regular (of bounded variation), the rough path sum $\mathbf{B}^n + v^n$ is simply (the rough path lift of) $B^n + v^n$. In particular, $h^n = v^n + B^n$ satisfies
\[
h^n(0)=B^n, \quad \dot{h}^n(s) = - \nabla_{\cH_n} \cL(h^n(s)).
\]
and convergence of $v^n(s)$ as $s \to \infty$ is equivalent to that of $h^n(s)$. 

Since  $h^{n,0}$ in the statement of the corollary has the same law as $B^n$, this proves the claim.
\end{proof}

As a simple consequence of the continuity results, we can also show that, in a general hypoelliptic situation, the gradient flow will converge with positive probability when started from Brownian motion.

\begin{proposition}
Let $\ww = \mathbf{B}(\omega)$ be enhanced Brownian motion. Assume that $\cH \supset C^{\infty}$, the $V_i$ are bracket-generating, and $g$  $\in C^2(\R^n)$ satisfies \eqref{eq:locLoj}.
Then, for the corresponding (random) gradient flow defined in section \ref{sec:defGF}, it holds that
\[
\P \left( \lim_{s \to \infty} h(s) = h_{\infty} \in \cH \mbox{ with } \cL_{\mathbf{B}(\omega)} (h_{\infty}) = 0 \right) > 0.
\]
\end{proposition}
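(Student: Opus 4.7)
The plan is to construct a deterministic reference rough path $\mathbf{w}_0$ for which the gradient flow trivially converges to a non-degenerate zero of the loss, and then transfer this good behaviour to almost every Brownian realisation close enough to $\mathbf{w}_0$. The two main ingredients will be the continuity statement of Proposition~\ref{prop:continuityGF} and the rough-path Stroock--Varadhan support theorem. We may clearly assume that $g$ admits a zero $y_0 \in \R^n$; otherwise the event under consideration is empty and there is nothing to prove.

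First, using classical Chow--Rashevskii together with the non-emptiness of the interior of endpoints attained by non-singular controls (the smooth analogue of Lemma~\ref{lemma:non-empty-intern}, or equivalently Theorem~\ref{thm:chow-rashevskii} applied with zero rough drift followed by a smooth perturbation via the implicit function theorem near a non-singular control), I would produce a smooth control $\tilde{h} \in C^\infty \subset \cH$ with $X_1^x(\tilde{h}) = y_0$ and $\tilde{h}$ non-singular. Set $\mathbf{w}_0 := S(\tilde{h}) \in C^{p-var}_g$. The gradient flow driven by $\mathbf{w}_0$ and started at $h(0)=0$ is then trivial: $\cL_{\mathbf{w}_0}(0) = g(y_0) = 0$ forces $h \equiv 0$ along the flow, so $h_\infty = 0$. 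The non-degeneracy $c_{\mathbf{w}_0}(0) > 0$ is immediate because $\mathbf{w}_0 + 0 = S(\tilde{h})$ produces exactly the same pair $(X,J)$ as the classical ODE driven by $\tilde{h}$, so the Malliavin-type matrix at $h=0$ coincides with its classical counterpart at the non-singular $\tilde{h}$.

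Proposition~\ref{prop:continuityGF}, applied with $\tilde{\cH} = \cH$ (so that $\epsilon_q(\tilde{\cH},\cH) = 0$), now provides some $\varepsilon > 0$ such that every $\tilde{\mathbf{w}} \in C^{p-var}_g$ with $\rho_{p-var}(\tilde{\mathbf{w}}, \mathbf{w}_0) < \varepsilon$ yields a convergent gradient flow whose limit annihilates $\cL_{\tilde{\mathbf{w}}}$. It therefore remains to show
\[
 \P\left( \rho_{p-var}(\mathbf{B},\mathbf{w}_0) < \varepsilon \right) > 0,
\]
which is precisely the statement that $\mathbf{w}_0$ lies in the support of enhanced Brownian motion in $p$-variation rough path topology. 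Since $\mathbf{w}_0$ is the canonical lift of a smooth path, this is the classical Stroock--Varadhan support theorem in its rough-path formulation (standard in the Friz--Victoir framework).

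The only step in this plan that does not follow by direct invocation of a result already established earlier in the paper is the production of a smooth non-singular control reaching $y_0$; this is classical in sub-Riemannian geometry, but it is the one point worth writing out carefully. All remaining work is just bookkeeping, and no new estimates are required.
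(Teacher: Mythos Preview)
Your proof is correct and follows essentially the same route as the paper's: find a smooth non-singular control $\tilde h$ hitting a zero of $g$, observe that the gradient flow started from its lift $S(\tilde h)$ converges trivially to a non-degenerate minimum, invoke Proposition~\ref{prop:continuityGF} for stability, and conclude via the rough-path support theorem for enhanced Brownian motion. One small quibble: if $g$ admits no zero then the event has probability $0$, so the proposition would be \emph{false} rather than vacuous --- the paper, like you, tacitly assumes a zero exists.
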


\begin{proof}
It is classical that there exists a control $h$ in $C^{\infty}$ such that $X_1(h) = y$ and $\nabla_{\cH} X_1 (h)$ is non-degenerate, i.e., $c(h) \neq 0$ (see \cite[Proposition 1.4.6]{Rif14}).

(Note that usually the non-degeneracy is stated for $\cH = H^1$, but it is in fact true for the gradient in any Hilbert space $\cH$ rich enough that $\cH^{\vee}$ is a norm (this is true if $\cH \subset C^{\infty}$). Indeed, by \eqref{eq:FormulaGradient}, it is then equivalent to the fact that for any non-zero $\xi$, $t \in [0,1] \mapsto \left( \langle J_{1 \leftarrow t} V^i(X_t)  \cdot \xi \right)_{i=1,\ldots,d}$ does not vanish identically, which does not depend on the choice of $\cH$.)

Fix $p > 2$ and let $\mathbf{h}$ be the lift of $h$ to $C^{p-var}_g$. By Proposition \ref{prop:continuityGF}, there exists $\delta >0$ such that 
\[ \rho_{p-r.p.}(\ww, \mathbf{h}) \leq \delta \; \Rightarrow \; \mbox{ the gradient flow started at $\ww$ converges to a zero of $\cL_{\ww}$.}
\]
On the other hand, since the support of enhanced Brownian motion contains the lifts of all smooth paths (cf. \cite{FV10}), the left hand side has positive probability for $\ww = \mathbf{B}(\omega)$.
\end{proof}

\begin{remark}
The only property of enhanced Brownian motion that we use is that its law has full support on rough path space, so that the result also holds for any random rough path with this property (for instance, fractional Brownian motion with Hurst parameter $H > \frac{1}{4}$, see e.g. \cite{FV10}).
\end{remark}

\section{Numerical experiments} \label{sec:num}
We design a numerical experiment with the following setup. We choose $d$ vector fields $V_1,\dots,V_d$ over $\R^n$. We draw two points $x,y\in\R^n$, randomly sampled from the normal distribution $\mathcal{N}(0,I)$ in $\R^n$, and a trajectory $w^H:[0,1]\rightarrow\R^d$ of a fractional Brownian motion with Hurst parameter $H\in(1/4,1)$. We then introduce the functional $\cL:\mathcal{H}\rightarrow\R$ such that,
for every $h\in\cH$,
\[
\cL(h) = \frac{1}{2}\left|X_1^x(w^H+h)-y\right|^2, 
\]
where $X^x(w^H+h)$ is the solution to the (rough) differential equation
\begin{equation}\label{eq:rde-h-num}
dX_t = \sum_i V_i (X_s) d (w^H + h)^i_s, \;\; X_0=x.
\end{equation}
Our goal is minimising the functional $\cL$ by means of a gradient flow defined on $\cH$. Specifically, we consider the solution $\left(h(s)\right)_{s\geq 0}$ to the $\cH$-valued ODE
\begin{equation}\label{eq:ode-gf-h}
  h(0)=0,\quad\frac{d}{ds}h(s)=-\nabla_{\cH} \mathcal{L}(h(s))  
\end{equation}
and study the behaviour of $\cL(h(s))$ for increasing values of $s\geq 0$. Notice that the coupled system \eqref{eq:rde-h-num}-\eqref{eq:ode-gf-h} has a forward-backward structure, in the sense that, in order to compute the value of $\cL(h(s))$ at time $s\geq 0$, one needs to compute the solution of the forward-in-time RDE \eqref{eq:rde-h-num}. Meanwhile, the computation of $\nabla_{\cH} \mathcal{L}(h(s))$, as suggested by \eqref{eq:nablaL} and \eqref{eq:formula-eq-gradient}, would require the computation of a backward-in-time RDE of the form \eqref{eq:formula-eq-jacobian}.

We discretise the dynamics \eqref{eq:rde-h-num} and the gradient flow \eqref{eq:ode-gf-h}. In detail, let $L\in\mathbb{N}$ be the number of time steps for the dynamics \eqref{eq:rde-h-num} with step size $\Delta t = 1/L$, and define, for $l=0,\dots,L$, the time steps $t_l=l\,\Delta t\in[0,1]$. For a continuous path $w:[0,1]\rightarrow\R^d$, denote its increments $w_{t_l,t_{l+1}}=w_{t_{l+1}}-w_{t_l}$ for every $l=0,\dots,L-1$. Let $K\in\mathbb{N}$ be the number of time steps for the gradient flow \eqref{eq:ode-gf-h}, or gradient descent updates, with step size $\Delta s$. Then, initialising the increments $h_{t_l,t_{l+1}}^0=0\in\R^d$ at step $k=0$, at any step $0\leq k\leq K$, given the increments $\{h_{t_l,t_{l+1}}^{k}\}_{l=0}^{L-1}$, $X^k_1$ is computed as the final value of the recursion
\begin{align}\label{eq:discrete-dyn}
    	\begin{split}
	&X_{t_0}^k = x,\\ 
    	&X_{t_{l+1}}^k = X_{t_l}^k + \sum_{i=1}^d V_i\left(X_{t_l}^k\right)\left(w_{t_l,t_{l+1}}^{H,i}+h_{t_l,t_{l+1}}^{k,i}\right)
	+\frac{1}{2}\sum_{i=1}^d DV_i\left(X_{t_l}^k\right)V_i\left(X_{t_l}^k\right)\Delta t^{2H},\\	&\quad\quad\quad\quad\quad\quad\quad\quad\quad\quad\quad\quad\quad\quad\quad\quad\quad\quad\quad\quad\quad\quad\quad\quad\quad\quad\quad\quad\quad\quad \forall l=0,\dots,L-1.
	\end{split}
\end{align}  

Note that the classical discretization scheme for rough differential equations would require iterated integrals of order $2$ (for $ H > \frac{1}{3}$ ) and $3$ (for $H > \frac{1}{4}$), which are difficult to simulate. In the scheme above we simply replace these iterated integrals by their expectations (only the term of order $2$ is non zero). Convergence of this scheme (along with rate) has been proven (in the driftless case) by \cite{LT19} for $H>\frac{1}{3}$ and \cite{Hua23} for $H > \frac{1}{4}$.

\modif{Moreover, note that the use of the discretization scheme for the computation of \eqref{eq:rde-h-num} induces a numerical scheme for the computation of $\nabla_{\cH}\cL$ in \eqref{eq:ode-gf-h}. In detail,} define the finite-dimensional functional $(\R^d)^L\ni \{h_{t_l,t_{l+1}}\}_{l=0}^{L-1}\mapsto \cL^L\left(\{h_{t_l,t_{l+1}}\}_{l=0}^{L-1}\right)=\frac{1}{2}|X_1-y|^2$ where $X_1$ is the final value of the recursion \eqref{eq:discrete-dyn} with $\{h_{t_l,t_{l+1}}\}_{l=0}^{L-1}\in(\R^d)^L$ as increments. \modif{Interpreting $\cH^L:=(\R^d)^L$ as a finite dimensional subspace of $\cH$, the increments of $\nabla_{\cH^L} \mathcal{L}$ are given by
\[
\nabla_{\cH^L} \mathcal{L}\left(h^k\right)_{t_l,t_{l+1}} := \Delta t \, \frac{\partial \cL^L}{\partial h_{t_l,t_{l+1}}}\left(h_{t_l,t_{l+1}}^{k}\right),\;\;\forall l=0,\dots,L-1.
\]}
The $(k+1)$-th gradient descent update of the increments then reads as
\begin{equation}\label{eq:gradient-descent}
    h_{t_l,t_{l+1}}^{k+1} = h_{t_l,t_{l+1}}^{k} - \Delta s\, \Delta t \, \frac{\partial \cL^L}{\partial h_{t_l,t_{l+1}}}\left(h_{t_l,t_{l+1}}^{k}\right),\;\;\forall l=0,\dots,L-1.
\end{equation}
Above, the quantity $\alpha_L = \Delta s \Delta t = \Delta s / L$ can be interpreted as the learning rate of the gradient descent for the finite-dimentional minimisation problem associated with $\cL^L$.

\modif{The above construction provides a consistent finite-dimensional approximation of the gradient flow in $\cH$. The error analysis of $\nabla_{\cH^L} \mathcal{L}$ with respect to $\nabla_{\cH} \mathcal{L}$ is a direction for future research. }

\begin{figure}[ht]
    \centering
    \begin{subfigure}[b]{0.49\textwidth}
        \centering
        \includegraphics[width=\textwidth]{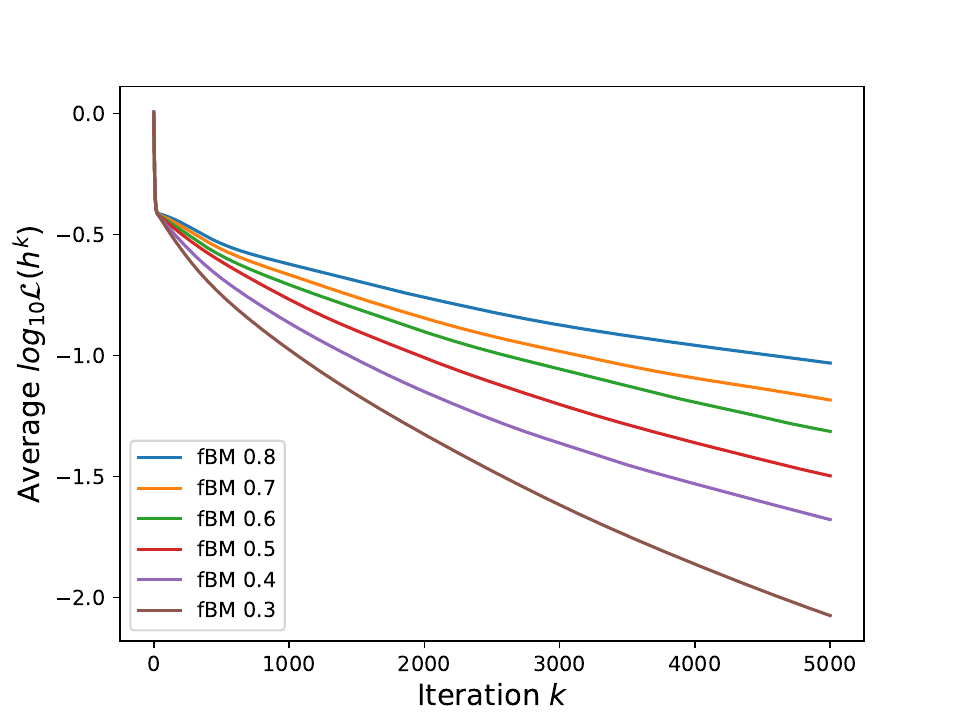}
        \caption{Example \ref{ex:step2nilpotent} with $n=55$ and $d=10$.}
        \label{fig:step2}
    \end{subfigure}
    \begin{subfigure}[b]{0.49\textwidth}
        \centering
        \includegraphics[width=\textwidth]{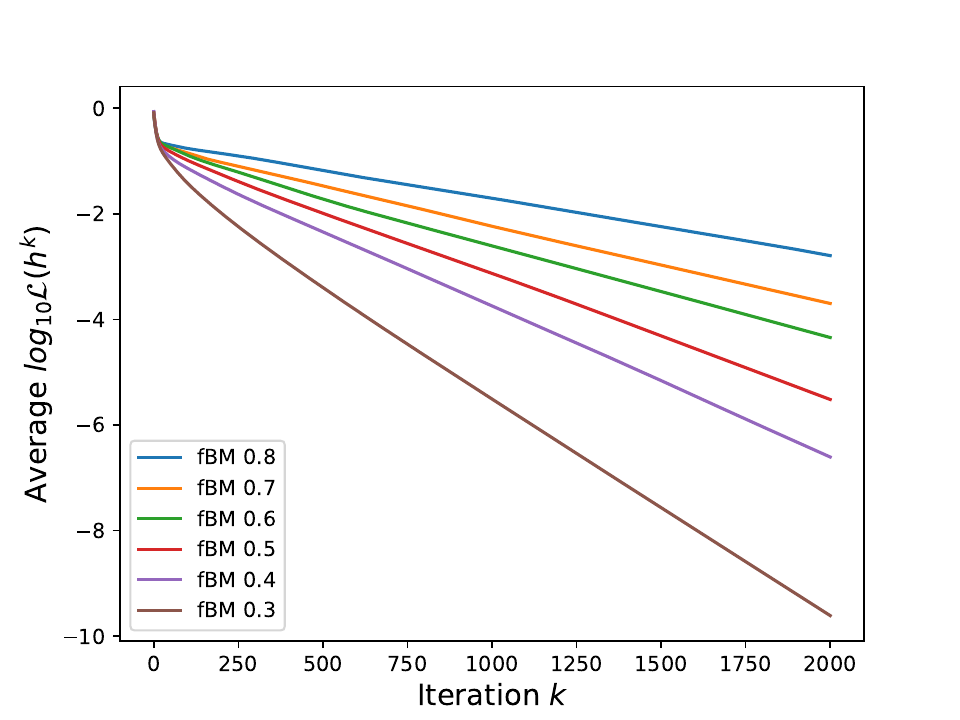}
        \caption{Example \ref{ex:step3nilpotent} with $n=5$ and $d=2$.}
        \label{fig:step3}
    \end{subfigure}
    \caption{Averaged evolution of $\log_{10} \mathcal{L}\left(h^k\right)$ as a function of the gradient descent iterations $k$ for different values of the Hurst parameter $H$.}
    \label{fig:numerics}
\end{figure}

We introduce two examples for numerical simulation.
\begin{example}[Step-2 nilpotent, Example 16.45 in \cite{FV10}]\label{ex:step2nilpotent}
For $d\geq 1$ and $n=d(d+1)/2$, we can identify $\R^n$ with $\R^d \times so(d)$ where $so(d)$ is the space of anti-symmetric $d\times d$ matrices, identified with its upper-diagonal elements. For $i=1,\dots,d$, we consider the $\R^d \times so(d)$-valued vector field $V_i$ such that
\[
V_i(x,x') = \partial_i + \frac{1}{2}\left( \sum_{1\leq j < i } x_j\partial_{j,i} - \sum_{ i < j \leq d} x_j\partial_{i,j} \right),\quad \forall (x,x')\in \R^d \times so(d),
\]
where $\partial_i$ denotes the $i$-th coordinate vector field on $\R^d$ and $\{\partial_{m,j}:1\leq m<j\leq d\}$ are the coordinate vector fields on $so(d)$. Notice that the family of vector fields $\{V_i:i=1,\dots,d\}$ is bracket generating and step-2 nilpotent in the sense of \eqref{eq:nilp}. 
\end{example}
\begin{example}[Step-3 nilpotent]\label{ex:step3nilpotent}
For $d=2$ and $n=5$, we consider the $\R^5$-valued vector fields $V_1$ and $V_2$ such that, for every $x\in\R^5$,
\[
\begin{split}
V_1(x) &= \partial_1-\frac{1}{2}x_2\partial_3- \left(\frac{1}{12}x_1x_2+ \frac{1}{2} x_3\right)\partial_4 -\frac{1}{12} x^2_2\partial_5,\\
 V_2(x) &= \partial_2+ \frac{1}{2}x_1\partial_3+\frac{1}{12}x^2_1\partial_4+\left( \frac{1}{12}x_1x_2- \frac{1}{2} x_3\right)\partial_5,
\end{split} 
\]
where $\{\partial_i:i=1,\dots,5\}$ are the coordinate vector fields on $\R^5$. We then define and compute
\[
V_3:=[V_1,V_2]=\partial_3+\frac{1}{2}x_1 \partial_4+ \frac{1}{2}x_2\partial_5,\quad V_4:=[V_1,V_3]=\partial_4,\quad V_5:=[V_2,V_3]=\partial_5.
\]
Notice that the vector fields $V_1$ and $V_2$ are bracket generating and it holds that, 
\[
\forall x\in\R^5,\forall i\in\{1,\dots,5\},\forall j\in\{4,5\},\quad [V_i,V_j](x)=0,
\]
which, in this case, is equivalent to a step-3 nilpotent condition.
\end{example}

We consider a multi-dimensional Example \ref{ex:step2nilpotent} in $\R^{55}$ with $d_1=10$ vector fields, and a lower-dimensional Example \ref{ex:step3nilpotent} in $\R^5$ with $d_2=2$ vector fields. In the numerical simulations, we set $L=100$, $\Delta s=0.1$, and the number of gradient descent updates, $K_1=5000$ and $K_2=2000$, respectively.   
For each example and each Hurst parameter $H\in\{0.3,0.4,\dots,0.8\}$, we repeat the iterated procedure \eqref{eq:gradient-descent} 100 times with independent points and initial trajectory sampling. We compute the average order of loss $\log_{10} \mathcal{L}\left(h^k\right)$ at each step $k$ and plot its evolution in Figure \ref{fig:numerics}. We observe that, for both examples, the value of $\mathcal{L}\left(h^k\right)$ decays exponentially with $k$. While Theorem \ref{thm:mainstep2} explains this fact in the case of Example \ref{ex:step2nilpotent} (at least for $H\leq 0.5$), the assumptions \eqref{eq:nilp}-\eqref{eq:step2} do not cover Example \ref{ex:step3nilpotent}. This suggests that the convergence result may also hold under more general assumptions on the vector fields. Determining whether the step-$N$ nilpotent condition or the more general H\"ormander condition is sufficient to prove convergence is a natural and interesting direction for future research. Furthermore, we observe from Figure \ref{fig:numerics} that the speed of convergence depends on the value $H$ of the Hurst parameter characterising the initial trajectory. More precisely, the lower the $H$, the faster the asymptotic decay of $\mathcal{L}\left(h^k\right)$. In particular, there is a non-negligible performance gain when sampling, for example, trajectories of fraction Brownian motion with Hurst parameter $H=0.3$ compared to trajectories of Brownian motion corresponding to $H=0.5$. This phenomenon is not fully explained by our theoretical findings, making its understanding an interesting avenue for future research, particularly in view of applications. 
\bibliographystyle{alpha}
\bibliography{rough_control-v2}

\end{document}